\tikzset{faded/.style={gray,very thin}}
\tikzset{vertex/.style={draw,circle,minimum size=5pt,inner sep=0pt}}
\tikzset{novertex/.style={circle,minimum size=5pt,inner sep=0pt}}
\tikzset{blackvertex/.style={draw,circle,minimum size=5pt,inner sep=0pt, fill=black}}
\tikzset{redvertex/.style={draw,circle,minimum size=5pt,inner sep=0pt, fill=red}}
\tikzset{redvertexfaded/.style={draw,circle,faded,minimum size=5pt,inner sep=0pt, fill=red!50}}
\tikzset{greenvertex/.style={draw,circle,minimum size=5pt,inner sep=0pt, fill=green}}
\tikzset{greenvertexfaded/.style={draw,circle,faded,minimum size=5pt,inner sep=0pt, fill=green!50}}
\tikzset{bluevertex/.style={draw,circle,minimum size=5pt,inner sep=0pt, fill=blue}}
\tikzset{bluevertexfaded/.style={draw,circle,faded,minimum size=5pt,inner sep=0pt, fill=blue!50}}
\tikzset{yellowvertex/.style={draw,circle,minimum size=5pt,inner sep=0pt, fill=yellow}}
\tikzset{yellowvertexfaded/.style={draw,circle,faded,minimum size=5pt,inner sep=0pt, fill=yellow!50}}
\tikzset{arrow/.style={-{Latex[scale=1]},shorten >= 4pt}}
\tikzset{snake it/.style={decorate, decoration=snake}}
\tikzset{big snake/.style={decorate, decoration={snake,segment length=7mm, amplitude=3mm}}}
\newcommand{\lipItem}[1]{\textcolor{lipicsGray}{\sffamily\bfseries\upshape\mathversion{bold}#1}}
\title{New Menger-like dualities in digraphs and applications to half-integral linkages} 
\titlerunning{New Menger-like dualities in digraphs and applications to half-integral linkages}
\author{Victor Campos}{ParGO group, Universidade Federal do Ceará, Fortaleza, Brazil}{victoitor@ufc.br}{https://orcid.org/0000-0002-2730-4640}{FUNCAP/PRONEM PNE-0112-00061.01.00/16, and CAPES/Cofecub 88887.712023/2022-00.}
\author{Jonas Costa}{ParGO group, Universidade Federal do Ceará, Fortaleza, Brazil}{jonascosta@lia.ufc.br}{}{FUNCAP/PRONEM PNE-0112-00061.01.00/16, FUNCAP PS1-0186-00155.01.00/2, and PhD scholarship granted by CAPES.}
\author{Raul Lopes}{DIENS, École normale supérieure, CNRS, PSL University, Paris, France \and Université Paris-Dauphine, CNRS UMR7243, PSL University, Paris, France}{raul.wayne@gmail.com}{https://orcid.org/0000-0002-7487-3475}{group Casino/ENS Chair on Algorithmics and Machine Learning, and French National Research Agency under JCJC program (ASSK: ANR-18-CE40-0025-01).}
\author{Ignasi Sau}{LIRMM, Université de Montpellier, CNRS, Montpellier, France}{ignasi.sau@lirmm.fr}{https://orcid.org/0000-0002-8981-9287}{ELIT (ANR-20-CE48-0008-01).}
\authorrunning{ }
\keywords{directed graphs, min-max relation, half-integral linkage, directed disjoint paths, bramble, parameterized complexity, matroids.} 
\DeclareMathOperator{\source}{\text{{\sf s}}^-\xspace}
\DeclareMathOperator{\sink}{\text{{\sf s}}^+\xspace}
\DeclareMathOperator{\order}{\text{\sf ord}\xspace}
\DeclareMathOperator{\dtw}{\text{\sf dtw}\xspace}
\DeclareMathOperator{\bn}{\text{\sf bn}\xspace}
\newcommand{\Dpaths}{\textsf{D}-paths\xspace}
\newcommand{\Tpaths}{\textsf{T}-paths\xspace}
\newcommand{\Rpaths}{\textsf{R}-paths\xspace}
\newcommand{\Dcut}{\textsf{D}-cut\xspace}
\newcommand{\Tcut}{\textsf{T}-cut\xspace}
\newcommand{\Rcut}{\textsf{R}-cut\xspace}
\newcommand{\Ocal}{\mathcal{O}\xspace}
\newcolumntype{\expand}{}
\long\@namedef{NC@rewrite@\string\expand}{\expandafter\NC@find}
  \def\boxproblem@arg{#1}%
  \def\boxproblem@framed{framed}%
  \def\boxproblem@lined{lined}%
  \def\boxproblem@doublelined{doublelined}%
    \def\boxproblem@hline{}%
      \def\boxproblem@hline{\hline\hline}%
      \def\boxproblem@hline{\hline}%
    \def\boxproblem@tablelayout{|>{\bfseries}lX|c}%
    \def\boxproblem@title{\multicolumn{2}{|l|}{%
        \raisebox{-\fboxsep}{\textsc{\normalsize #2}}%
      }}%
    \def\boxproblem@tablelayout{>{\bfseries}lXc}%
    \def\boxproblem@title{\multicolumn{2}{l}{%
        \raisebox{-\fboxsep}{\textsc{\normalsize #2}}%
      }}%
\newcommand{\cupall}{\pmb{\pmb{\bigcup}}}
\definecolor{mid-green}{rgb}{0.15,0.65,0.15}
\definecolor{dark-green}{rgb}{0.15,0.25,0.15}
\definecolor{dark-red}{rgb}{0.7,0.15,0.15}
\definecolor{dark-blue}{rgb}{0.15,0.15,0.9}
\definecolor{medium-blue}{rgb}{0,0,0.5}
\definecolor{gray}{rgb}{0.5,0.5,0.5}
\definecolor{color-Ig}{rgb}{0.15,0.7,0.15}
\definecolor{darkmagenta}{rgb}{0.30, 0.0, 0.30}
\newcommand{\red}{\textcolor{red}}
\renewcommand{\NP}{{\sf NP}\xspace}
\renewcommand{\P}{{\sf P}\xspace}
\renewcommand{\FPT}{{\sf FPT}\xspace}
\renewcommand{\XP}{{\sf XP}\xspace}
\begin{document}

\maketitle

\begin{abstract}
We present new min-max relations in digraphs between the number of paths satisfying certain conditions and the order of the corresponding cuts. We define these objects in order to capture, in the context of solving the half-integral linkage problem, the essential properties  needed for reaching a large bramble of congestion two (or any other constant) from the terminal set. This strategy has been used ad-hoc in several articles, usually with lengthy technical proofs, and our objective is to abstract it to make it applicable in a simpler and unified way. We provide two proofs of the min-max relations, one consisting in applying Menger's Theorem on appropriately defined auxiliary digraphs, and an alternative simpler one using matroids, however with worse polynomial running time.

As an application, we manage to simplify and improve several results of Edwards et al.~[ESA 2017] and of Giannopoulou et al.~[SODA 2022] about finding half-integral linkages in digraphs. Concerning the former, besides being simpler, our proof provides an almost optimal bound on the strong connectivity of a digraph for it to be half-integrally feasible under the presence of a large bramble of congestion two (or equivalently, if the directed tree-width is large, which is the hard case). Concerning the latter, our proof uses brambles as rerouting objects instead of cylindrical grids, hence yielding much better bounds and being somehow independent of a particular topology.

We hope that our min-max relations will find further applications as, in our opinion, they are simple, robust, and versatile to be easily applicable to different types of routing problems in digraphs.
\end{abstract}


\newpage
\setcounter{page}{1}
\section{Introduction}\label{sec:intro}
In combinatorial optimization, a \emph{min-max relation} establishes the equality between two quantities, one naturally associated with {\sl minimizing} the size of an object satisfying some conditions, and the other one associated with {\sl maximizing} the size of another object. Within graph theory, famous such min-max relations include Kőnig's Theorem~\cite{Konig31} stating the equality between the sizes of a maximum matching and a minimum vertex cover in a bipartite graph or, more relevant to this article, Menger's Theorem~\cite{Menger1927} stating, in its simplest form, the equality between the maximum number of pairwise internally disjoint paths between two vertices, and the minimum size of a vertex set disconnecting them. Typically, min-max relations come along with polynomial-time algorithms to find the corresponding objects, making them extremely useful from the algorithmic point of view.

In this article we focus on directed graphs, or \emph{digraphs} for short, and our results are motivated by the complexity of problems related to finding \emph{directed disjoint paths} between given terminals. More precisely, in the $k$-\textsc{Directed Disjoint Paths} problem ($k$-\textsc{DDP} for short), we are given a digraph $G$ and $k$ pairs of vertices $s_i,t_i, i \in [k]$, and the objective is to decide whether $G$ contains $k$ pairwise disjoint paths connecting $s_i$ to $t_i$ for $i \in [k]$. A solution to this problem is usually called a \emph{linkage} in the literature.
Here we note that disjoint paths is equivalent to paths which are vertex-disjoint.

Unfortunately, Fortune et al.~\cite{FortuneHW80} proved that the $k$-\textsc{DDP} problem is \NP-complete already for $k=2$, and Thomassen~\cite{Thomassen91} strengthened this result by showing that it remains so even if the input digraph is $p$-strongly connected (see \autoref{sec:preliminaries} for the definition) for any integer $p \geq 1$. Thus, in order to obtain positive algorithmic results, research has focused on either restricting the input digraphs (for instance, to being acyclic~\cite{Slivkins2010} or, more generally, to having bounded directed tree-width~\cite{Johnson2001}), or on considering relaxations of the problem. Concerning the latter, a natural candidate is to relax the disjointness condition of the paths, and allow for \emph{congestion} in the vertices. Namely, for an integer $c \geq 2$, an input of the $k$-\textsc{Directed $c$-Congested Disjoint Paths} problem ($(k,c)$-\textsc{DDP} for short) is the same as in the $k$-\textsc{DDP}, but now we allow each vertex of $G$ to occur in at most $c$ out of the $k$ paths connecting the terminals. In the particular case $c=2$, a solution to this problem is usually called a \emph{half-integral linkage} in the literature.

Despite a considerable number of attempts, it is still open whether the $(k,c)$-\textsc{DDP} problem can be solved in polynomial time for every fixed value of $c \geq 2$ and $k > c$ (note that if $k \leq c$, then the problem can be easily solvable in polynomial time just by verifying the connectivity between each pair of terminals). A positive answer for the case $c=2$  has been recently conjectured by Giannopoulou et al.~\cite{GiannopoulouKKK22}. Again, in order to obtain positive results, several restrictions and variations of the problem have been considered, such as considering several parameterizations~\cite{AmiriKMR19,Lopes2022}, restricting the input graph to have high connectivity~\cite{Edwards2017}, or considering an \emph{asymmetric} version of the $(k,c)$-\textsc{DDP} problem~\cite{KawarabayashiK15,KawarabayashiKK14,GiannopoulouKKK22}, where the input is as in $(k,c)$-\textsc{DDP}, but the goal is to either certify that it is a {\sf no}-instance of $k$-\textsc{DDP} (without congestion) or a {\sf yes}-instance of $(k,c)$-\textsc{DDP}. This asymmetric version has been solved in polynomial time for every fixed $k$ (i.e., showing that it is in \XP; see \autoref{sec:preliminaries}) for distinct values of $c$ in a series of articles, namely for $c=4$ by Kawarabayashi et al.~\cite{KawarabayashiKK14}, for $c=3$ by Kawarabayashi and Kreutzer~\cite{KawarabayashiK15}, and for $c=2$ by Giannopoulou et al.~\cite{GiannopoulouKKK22}.

The main motivation of this article stems from the techniques used in the latter two approaches mentioned above. In a nutshell, the main strategy used in~\cite{Edwards2017,KawarabayashiKK14,KawarabayashiK15,GiannopoulouKKK22} is the following. First, one computes whether the directed tree-width of the input graph is bounded by an appropriate function of $k$, the number of terminal pairs. This can be done in time \XP in $k$ by the results of
Johnson et al.~\cite{Johnson2001}, or even in time \FPT by the results of Campos et al.~\cite{Campos2022} (see also \cite[Theorem 9.4.4]{BangJensen2018}).
If the directed tree-width is bounded by a function of $k$, one solves the problem in time \XP by using standard programming techniques from Johnson et al.~\cite{Johnson2001} (cf. \autoref{proposition:XP-algo-DDP-congestion}). If not, one exploits the fact that large directed tree-width implies the existence of large ``structures'' that can be used to carry out the routing of the desired paths. Typically, such a structure is a \emph{bramble}, as for example in~\cite{Edwards2017}, or a \emph{cylindrical grid}, as for example in~\cite{GiannopoulouKKK22,KawarabayashiK15}, making use of the celebrated Directed Grid Theorem of Kawarabayashi and Kreutzer~\cite{KawarabayashiK15} (see also~\cite{Campos2022} for recent improvements). For the sake of exposition, assume henceforth that the desired structure is a bramble, but the strategy is essentially the same with a cylindrical grid.

A bramble in a digraph $D$ is a set $\mathcal{B}$ of strongly connected subgraphs of $G$ that pairwise either intersect or have edges in both directions. The \emph{order} of a bramble $\mathcal{B}$ is the smallest size of a vertex set of $G$ that intersects all its elements, and its \emph{congestion} is the maximum number of times that a vertex of $G$ appears in the elements of $\mathcal{B}$. It is known that large directed treewidth implies the existence of a bramble of large order and of congestion $c \geq 2$.
For $c=2$, a proof about how to find such a bramble in polynomial time (with degree not depending on $k$), provided that a certificate for large directed tree-width is given, can be found in~\cite{Edwards2017} (see also~\cite{MasarikPRS22} for improved bounds for brambles of higher congestion). Assume for simplicity that $c=2$, let $\mathcal{B}$ be such a bramble, and let $S$ and $T$ be the sets of sources and sinks, respectively, of the corresponding problem. The idea is that if one can find a set $\mathscr{P}^S$ of disjoint paths from $S$ to appropriate elements of $\mathcal{B}$, and a set $\mathscr{P}^T$ of disjoint paths from appropriate elements of $\mathcal{B}$ to $T$ (regardless of the ordering of the vertices of $S$ and $T$), then we are done. Indeed, once the paths starting in $S$ reach $\mathcal{B}$, one can use the connectivity properties of the bramble to ``shuffle'' the paths appropriately as required by the terminals, and then follow the paths from $\mathcal{B}$ to $T$. The fact that the bramble has congestion two, and that the paths in $\mathscr{P}^S$, as well as those in $\mathscr{P}^T$, are pairwise disjoint, together with a good choice for the destination and starting points of the those paths, implies that every vertex of $G$ occurs in at most two of the resulting paths.

Otherwise, if such sets of paths $\mathscr{P}^S$ and $\mathscr{P}^T$ do {\sl not} exist, the approach consists in using a Menger-like min-max duality to obtain an appropriate \emph{separator} (or \emph{cut}) between the terminals and the bramble of size bounded by a function of $c$ and $k$, and make some progress toward the resolution of the problem, for instance by splitting into subproblems of lower complexity. The ways to define and to exploit such a separator depend on every particular application, and this ad-hoc subroutine is usually one of the most technically involved parts of the resulting algorithms~\cite{Edwards2017,KawarabayashiKK14,KawarabayashiK15,GiannopoulouKKK22}.

\medskip
\noindent\textbf{Our results and techniques}. Motivated by the inherent common essential strategy in the above articles, we aim at finding the crucial general ingredient that can be applied in order to define and find the corresponding separators. To this end, we introduce new objects that abstract the existence of the aforementioned desired paths $\mathscr{P}^S$ and $\mathscr{P}^T$ between the terminals and the bramble.
These objects are what we call \emph{\Dpaths}, \emph{\Tpaths} and \emph{\Rpaths}.
The inspiration for \Dpaths and \Tpaths is what we believe to be the common essential strategies used by Edwards et al.~\cite{Edwards2017} and Giannopoulou et al.~\cite{Giannopoulou2020}.
We remark that a particular set of \Tpaths is directly constructed in~\cite{Giannopoulou2020}, particularly inside the proof of~\cite[Theorem 9.1]{Giannopoulou2020}.
The presence of \Dpaths and \Tpaths in~\cite{Edwards2017} is more subtle in the construction of a long algorithm and a collection of non-trivial proofs.
In fact, they are not explicitly built due to constraints in their techniques, but our initial results for this paper included simplifying and improving the proofs in~\cite{Edwards2017} using \Dpaths and \Tpaths.
Once the new proofs were obtained, we noticed that they could be further improved by a new object which we call \Rpaths.
It must be noted that this paper includes results for \Dpaths, \Tpaths, and \Rpaths but we only show applications for \Rpaths.
The reason for this is twofold.
First, \Tpaths have stronger properties than \Rpaths which might be useful for solving different problems, and \Dpaths are needed in the proof of the duality of \Tpaths.
And second, these three objects are similar to the point of having similar proofs for their min-max formulas and algorithms, and showing these variations incurs little additional effort. 
The formal definitions of these special types of paths can be found in \autoref{sec:newpaths}.

All three types of paths are associated with a defining partition $\mathcal{P}_1, \ldots, \mathcal{P}_{\ell}$ sharing some properties and differing in others.
For all three, it holds that any two paths in a same part $\mathcal{P}_i$ are disjoint, and it is possible that two paths in distinct parts share vertices.
In the context of the $(k,c)$-\textsc{DDP} problem, the main difference between \Dpaths, \Tpaths, and \Rpaths lies in how we want to reach (or, by applying a simple trick of reversing the edges of the digraph, be reached from) the elements of a bramble $\mathcal{B}$ in the given digraph.
In \Dpaths we ask that all paths end in distinct vertices of a given $B \subseteq V(G)$.
In \Tpaths we ask that all paths end in distinct vertices of the bramble, and that the set containing all last vertices of the \Tpaths forms a \emph{partial transversal} (see \autoref{sec:preliminaries} for the definition) of $\mathcal{B}$.
In \Rpaths we ask that all \Rpaths end in elements of $\mathcal{B}$ and that there is a ``matching-like'' association between the last vertices of the paths and elements of $\mathcal{B}$ containing these vertices.

For each type of paths we define an associated notion of \emph{cut} and its corresponding \emph{order} (see e.g. \autoref{def:D-paths-and-D-cuts}).
These cuts are respectively called \emph{\Dcut{s}}, \emph{\Tcut{s}}, and \emph{\Rcut{s}}.
We show that each of these special types of paths and cuts satisfies a Menger-like min-max duality, that is, that the maximum number of paths equals the minimum order of a cut (cf.~\cref{theo:min-max-statement-var-1,theo:min-max-statement-var-2,theo:min-max-statement-var-3}).
Moreover, the corresponding objects attaining the equality  can be found in polynomial time. The proofs of these min-max relations basically consist in applying Menger's Theorem~\cite{Menger1927} in appropriately defined auxiliary graphs.
In \red{\autoref{subsection:proofs-by-matroids}} we provide alternative simpler proofs of these equalities using intersections and unions of matroids, namely gammoids and transversal matroids. Even if the resulting polynomial-time algorithms using matroids have worse running time than the ones that we obtain by applying Menger's Theorem~\cite{Menger1927}, and that using the deep theory of matroids somehow sheds less light on interpreting the actual behavior of the considered objects, we think that it is interesting to observe that the paths and cuts that we define are in fact matroids with nice properties.

\smallskip

The main application we provide for \Rpaths/cuts in this article is in the proof of \autoref{theorem:solution_or_small_separator}, which is an improved version of~\cite[Theorem 9.1]{Giannopoulou2020} both in the requested order of the structure and by relying on brambles instead of cylindrical grids.
Informally, \autoref{theorem:solution_or_small_separator} says that given a digraph $G$, ordered sets $S,T \subseteq V(G)$, and a large (depending on the congestion $c$ and on the size $k$ of $S$ and $T$) bramble of congestion $c$, we can either find a large set of \Rpaths from $S$ to the bramble and from the bramble to $T$, which in turn are used to appropriately connect the pairs $s_i \in S, t_i \in T$, or find a separator of size at most $k-1$ intersecting every path from $S$ to a large subset of the bramble, or every path from a large subset of the bramble to $T$.
Additionally, if the bramble is given, one of the outputs can be obtained in polynomial time, computing either the paths or one of the separators.

Since in $k$-strong digraphs the separators are never found, \autoref{theorem:solution_or_small_separator} immediately implies an improved version of a result by Edwards et al.~\cite{Edwards2017}.
Namely, in~\cite[Theorem 11]{Edwards2017} the authors show that, when restricted to $(36k^3 + 2k)$-strong digraphs, every instance of $(k,2)$-\textsc{DDP} where the input digraph contains a large (depending on $k$) bramble of congestion two is positive and a solution can be found in polynomial time.
When compared to theirs, our result is an improvement in the following ways.
First, it allows us to solve $(k,c)$-\textsc{DDP} for any $c \geq 2$ in the larger class of $k$-strong digraphs instead of being restricted to $(36k^3 + 2k)$-strong digraphs as in~\cite{Edwards2017}.
This bound on the strong connectivity of the digraph is almost best possible according to~\cite[Theorem 2]{Edwards2017}, unless $\P = \NP$ (note that an {\XP} algorithm in $(k-d)$-strong digraphs, for some constant $d$, may still be possible).
Second, we show how to find the desired paths using a bramble of congestion $c$ and size at least $2k(c\cdot k - c + 2) + c(k-1)$, which is equal to $4k^2 + 2k - 2$ when $c=2$, instead of the size $188k^3$ required in~\cite{Edwards2017}.
Finally, our proof is much simpler and shorter than the proof presented in~\cite{Edwards2017}.
A main reason of this simplification is that we can replace the seven properties of the paths requested in~\cite[Lemma 12]{Edwards2017} by \Rpaths. It is worth mentioning that our algorithm reuses the procedure of Edwards et al.~\cite{Edwards2017} to find a large bramble of congestion two in digraphs of large directed tree-width (cf. \autoref{cor:bounded-dtw-or-bramble-congestion-two}).

We remark that it is also possible to improve the result by Edwards et al.~\cite{Edwards2017} from $(36k^3 + 2k)$-strong digraphs to $k$-strong digraphs by replacing part of their proof, namely~\cite[Theorem 11]{Edwards2017}, by the result of Giannopoulou et al.~\cite[Theorem 9.1]{Giannopoulou2020}.
The trade-off is that the latter relies on the stronger structure of a cylindrical grid (and such grids do contain brambles of congestion two~\cite{Edwards2017}) instead of brambles.
A fundamental difference stands on the fact that, given a certificate of large directed tree-width, one can produce a bramble of congestion two in polynomial time, while finding a cylindrical grid still requires {\FPT} time parameterized by the order of the certificate~\cite{Campos2022}.
Our result using \Rpaths keeps the best of both worlds: we are able to drop the request on the strong connectivity of the digraph to $k$ while relying only on brambles as the routing structures.

\smallskip

Our second application deals with the asymmetric version of the $(k,c)$-\textsc{DDP} problem discussed in the introduction.
By using our min-max relations, we manage to simplify and improve one of the main results of Giannopoulou et al.~\cite{GiannopoulouKKK22} for the case $c=2$.
Instead of using the Directed Grid Theorem~\cite{KawarabayashiK15} to reroute the paths through a cylindrical grid, we reroute them through a bramble of congestion two in a very easy manner after a careful choice of the paths reaching and leaving the bramble, which is done by applying the duality between \Rpaths and \Rcut{s}.
Namely, we can replace~\cite[Theorem 9.1]{Giannopoulou2020} (this is the full version of~\cite{GiannopoulouKKK22}) entirely by \autoref{theorem:solution_or_small_separator} and mostly keep the remaining part of their proof to obtain an improved version of their {\XP} algorithm for the asymmetric version of $(k,2)$-\textsc{DDP}.

\smallskip

We hope that our min-max relations will find further applications in the future as, in our opinion, they are quite simple, robust, and versatile to be easily applicable to different types of routing problems in digraphs. A natural candidate is the $(k,c)$-\textsc{DDP} problem for any choice of fixed values of $c \geq 2$ and $k > c$, which has remained elusive for some time.

\medskip
\noindent\textbf{Organization}. In \autoref{sec:preliminaries} we present some  preliminaries about digraphs, matroids, parameterized complexity, and known results about brambles and related objects. In \autoref{sec:newpaths} we present and prove our new Menger-like statements for paths in digraphs. The alternative proofs using matroids can be found in \autoref{subsection:proofs-by-matroids}, and the proofs using Menger's Theorem in \autoref{subsection:proofs-by-menger}. The applications of our results can be found in \autoref{section:DDP-algorithm}.

\section{Preliminaries}\label{sec:preliminaries}

In this section we provide the definitions and notation used in this text.
If $\mathcal{B}$ is a collection of sets, we denote the we use $\cupall \mathcal{B}$ to denote the set $\bigcup_{A \in \mathcal{B}}A$.
For a positive integer $k$, we denote by $[k]$ the set $\{1, \ldots, k\}$.

\subsection{Digraphs}
For basic background on graph theory we refer the reader to~\cite{Bondy2008}.
Since in this article we mainly work with digraphs, we focus on basic definitions of digraphs, often skipping their undirected counterparts.
Unless stated otherwise, every edge is assumed to be directed, and given a digraph $G$ we denote by $V(G)$ and $E(G)$ the sets of vertices and edges of $G$, respectively.
We allow for loops and parallel edges in our digraphs.
Given $X \subseteq V(G)$, we denote by $G \setminus X$ the digraph resulting from removing every vertex of $X$ from $G$.
We denote by $G[X]$ the subgraph of $G$ \emph{induced} by $X$.

If $e$ is an edge of a digraph from a vertex $u$ to a vertex $v$, we say that $e$ has \emph{endpoints} $u$ and $v$, that $e$ is \emph{incident} to $u$ and $v$, and that $e$ is \emph{oriented} from $u$ to $v$. We may refer to $e$ as the ordered pair $(u,v)$.
In this case, $u$ is the \emph{tail} of $e$ and $v$ is the \emph{head} of $e$.
We also say that $e$ is \emph{leaving} $u$ and \emph{reaching} $v$, and that $u$ and $v$ are \emph{adjacent}.
A \emph{clique} in a (di)graph $G$ is a set of pairwise adjacent vertices of $G$, and an \emph{independent set} is a set of pairwise non-adjacent vertices of $G$.

The \emph{in-degree} (resp. \emph{out-degree}) of a vertex $v$ in a digraph $G$ is the number of edges with head (resp. tail) $v$.
The \emph{in-neighborhood} $N^-_D(v)$ of $v$ is the set $\{u \in V(D) \mid (u,v) \in E(G)\}$, and the \emph{out-neighborhood} $N^+_D(v)$ is the set $\{u \in V(D) \mid (v,u) \in E(G)\}$.
We say that $u$ is an \emph{in-neighbor} of $v$ if $u \in N^-_D(v)$ and that $u$ is an \emph{out-neighbor} of $v$ if $u \in N^+_D(v)$.
We extend these notations to sets of vertices: given $X \subseteq V(D)$, we define $N^-_D(X) = (\bigcup_{v \in X}N^-_D(v)) \setminus X$ and $N^+_D(X) = (\bigcup_{v \in X}N^+_D(v)) \setminus X$.

 A \emph{walk} in a digraph $G$ is an alternating sequence $W$ of vertices and edges that starts and ends with a vertex, and such that for every edge $(u,v)$ in the walk, vertex $u$ (resp. vertex $v$) is the element right before (resp. right after) edge $(u,v)$ in $W$.
 If the first vertex in a walk is $u$ and the last one is $v$, then we say this is a \emph{walk from $u$ to $v$}.
A \emph{path} is a digraph formed by pairwise distinct vertices $V(P) = \{v_1, \ldots, v_k\}$ and $E(P) = \{(v_i, v_{i+1}) \mid i \in [k-1]\}$ for some positive integer $k$. 
We say that $v_1$ is the \emph{first} vertex of $P$, $v_k$ is the \emph{last} vertex of $P$ and $P$ is a path \emph{from} $v_1$ \emph{to} $v_k$.
We denote by \emph{$\source(P)$} and \emph{$\sink(P)$} the first and last vertices of $P$, respectively.
Every vertex of $P$ other than $\source(P)$ and $\sink(P)$ is an \emph{internal} vertex.
For $A, B \in V(G)$, we say that $P$ is an \emph{$A \to B$ path} if  $\source(P)\in A$ and $\sink(P)\in B$.
For $A,B \subseteq V(G)$ an \emph{$(A,B)$-separator} is a set $X \subseteq V(G)$ such that there are no $A \to B$ paths in $G \setminus X$.

Let $\mathcal{P}$ be a collection of paths in $G$.
We use $\source(\mathcal{P})$ to denote $\bigcup_{P\in \mathcal{P}}\source(P)$ and $\sink(\mathcal{P})$ to denote $\bigcup_{P\in \mathcal{P}}\sink(P)$.
For conciseness, we say henceforth that the paths in $\mathcal{P}$ are \emph{disjoint} if they are pairwise vertex-disjoint.
For $A, B \in V(G)$, we say that $\mathcal{P}$ is a collection of $A \to B$ paths if  $\source(\mathcal{P}) \subseteq A$ and $\sink(\mathcal{P}) \subseteq B$.
For the remaining of this article and unless stated otherwise, $n$ is used to denote the number of vertices of the input digraph of the problem under consideration.
\begin{theorem}[{Menger's Theorem}~\cite{Menger1927}] \label{thm:Menger}
Let $G$ be a digraph and $A,B \subseteq V(D)$.
The maximum size of a collection of disjoint $A \to B$ paths is equal to the minimum size of an $(A,B)$-separator.
Furthermore, a maximum size collection of paths and a minimum size separator can be found in time $\Ocal(n^2)$. 
\end{theorem}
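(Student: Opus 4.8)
The plan is to prove the two inequalities separately: the trivial direction $\max \le \min$ directly, and the substantial direction $\min \le \max$ together with the algorithmic statement by reducing to the integral max-flow--min-cut theorem on a vertex-split auxiliary network. A purely combinatorial induction (delete/contract an edge and analyse cases) is also possible, but the flow route has the advantage of yielding the claimed algorithm for free.

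For the easy inequality, suppose $X$ is an $(A,B)$-separator and $P$ is an $A \to B$ path with $V(P) \cap X = \emptyset$; then $P$ still exists in $G \setminus X$, contradicting the definition of a separator. Hence every $A \to B$ path meets every $(A,B)$-separator, so a family of pairwise vertex-disjoint $A \to B$ paths meets a fixed separator $X$ in pairwise disjoint nonempty subsets and therefore has size at most $|X|$. Maximising on the left and minimising on the right gives $\max \le \min$.

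For the reverse inequality I would build an auxiliary digraph $G'$ as follows. First discard loops and parallel edges of $G$, which are useless for vertex-disjoint paths. Then split each $v \in V(G)$ into $v^-, v^+$ joined by an \emph{internal} arc $v^- v^+$ of capacity $1$; replace each arc $(u,v)$ of $G$ by the arc $u^+ v^-$ of capacity $+\infty$; and add a source $s^{\star}$ with arcs $s^{\star} v^-$ of capacity $+\infty$ for $v \in A$, and a sink $t^{\star}$ with arcs $v^+ t^{\star}$ of capacity $+\infty$ for $v \in B$. Two standard observations do the work. (i) Any finite $s^{\star}$--$t^{\star}$ cut of $G'$ uses only internal arcs, and $\{\,v : v^- v^+ \text{ lies in the cut}\,\}$ is then an $(A,B)$-separator of $G$ of the same size; conversely, a separator $X$ yields the finite cut $\{\,v^- v^+ : v \in X\,\}$. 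Hence the minimum cut value of $G'$ equals the minimum size of an $(A,B)$-separator. (ii) By flow decomposition, an integral $s^{\star}$--$t^{\star}$ flow of value $k$ splits into $k$ directed $s^{\star}$--$t^{\star}$ paths (plus cycles, which we discard) that use pairwise disjoint internal arcs, since those arcs have capacity $1$; projecting each such path back to $G$ (deleting $s^{\star},t^{\star}$ and contracting internal arcs) gives $k$ pairwise vertex-disjoint $A \to B$ paths, and conversely $k$ disjoint $A \to B$ paths route $k$ units of integral flow. Hence the maximum integral flow value of $G'$ equals the maximum size of a family of disjoint $A \to B$ paths. The integral max-flow--min-cut theorem (itself provable by the augmenting-path argument, which terminates because the flow value is at most $|A| \le n$) equates the two middle quantities, and chaining (i) and (ii) yields the equality. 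One should check (i) and (ii) remain correct in the degenerate situations $A = \emptyset$ and $A \cap B \neq \emptyset$ (single-vertex $A\to B$ paths are allowed, and every vertex of $A\cap B$ must lie in every separator), which it does, since such a vertex $v$ corresponds to the path $s^{\star} v^- v^+ t^{\star}$ and forces $v^-v^+$ into every cut.

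Finally, for the algorithmic part, $G'$ has $2n+2$ vertices and $\mathcal{O}(n^2)$ arcs; running an augmenting-path max-flow procedure on it produces an optimal integral flow and a minimum cut, whence the flow decomposition gives the path family and the cut gives the separator, both in polynomial time. The flow value never exceeds $n$, so at most $n$ augmentations are needed, and a careful implementation exploiting the unit capacity of the internal arcs yields the stated $\mathcal{O}(n^2)$ bound. I expect this running-time bookkeeping — squeezing out the $\mathcal{O}(n^2)$ rather than settling for the naive $\mathcal{O}(nm)$ — to be the only genuinely delicate point; the min-max equality itself is routine once the auxiliary network is set up correctly.
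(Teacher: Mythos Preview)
The paper does not prove this statement at all: Menger's Theorem is stated in the Preliminaries as a classical result with a citation to~\cite{Menger1927}, and is used as a black box throughout. So there is no ``paper's own proof'' to compare against.

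Your argument for the min-max equality is the standard and correct reduction to integral max-flow/min-cut via vertex splitting; nothing to object to there. The only point worth flagging is the running time. You correctly identify it as the delicate step, but then do not actually carry it out: ``at most $n$ augmentations'' together with an $\mathcal{O}(m)=\mathcal{O}(n^2)$ BFS per augmentation gives $\mathcal{O}(n^3)$, not $\mathcal{O}(n^2)$, and the phrase ``a careful implementation exploiting the unit capacity of the internal arcs'' is a promissory note rather than an argument. Dinic's algorithm on unit-capacity networks gives $\mathcal{O}(m\sqrt{n})$, which is $\mathcal{O}(n^{2.5})$ on dense inputs; getting to $\mathcal{O}(n^2)$ in general requires either citing a near-linear max-flow result or a more specialised argument that you have not supplied. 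Since the paper simply asserts the bound without proof, this is not a discrepancy with the paper, but if you intend your write-up to be self-contained you should either justify the $\mathcal{O}(n^2)$ or weaken the claim to polynomial time.
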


A digraph $G$ is \emph{strongly connected} if for every $u,v \in V(G)$ there are paths from $u$ to $v$ and from $v$ to $u$ in $G$.
A \emph{separator} of $G$ is a set $X \subseteq G$ such that $G \setminus X$ has a single vertex or is \textsl{not} strongly connected.
If $G$ has at least $k+1$ vertices and $k$ is the minimum size of a separator of $G$, we say that $G$ is \emph{$k$-strongly connected} (or \emph{$k$-strong} for short).
A \emph{strongly connected component} (or \emph{strong component} for short) of a digraph $G$ is a maximal induced subgraph of $G$ that is strongly connected.

\subsection{Matroids}

We refer the reader to~\cite{Oxley2011} for background on matroid theory and to \cite{Schrijver2002} for a compendium of classical results of the field.
For a matroid $M$, we denote its ground set by $E(M)$ and its collection of independent sets by $\mathcal{I}(M)$.
We say that $M$ is a matroid \emph{on} $E$ if the ground set of $M$ is $E$.
A subset $I$ of $E(M)$ is \emph{independent} if $I \in \mathcal{I}(M)$.
The \emph{rank} $r_M(U)$ of $U \subseteq E(M)$ is the largest size of an independent subset of $U$.

For a sequence of sets $\mathcal{B} = (B_1, \ldots, B_k)$, a \emph{transversal} of $\mathcal{B}$ is a set $\{b_1, \ldots, b_k\}$ such that $b_i \in B_i$ for all $i \in [k]$.
Here we remark that the terms in $\mathcal{B}$ need not be distinct but the elements  in a transversal $\{b_1, \ldots, b_k\}$ are distinct.
For a set of indices $J$, we use $(B_j \mid j\in J)$ to denote the sequence of sets indexed by $J$ so we can use $\mathcal{B} = (B_j \mid j\in [k])$.
A \emph{subsequence} of $\mathcal{B}$ is a sequence $(B_j \mid j\in J)$ for $J\subseteq [k]$.
A \emph{partial transversal} of $\mathcal{B}$ is a transversal of some subsequence of $\mathcal{B}$.
For convenience, we extend all notation regarding transversals and partial transversals to collections of sets.

\begin{proposition}\label{prop:transversalmatroid}
    If $\mathcal{A}$ is a sequence of subsets of a set $S$ and $\mathcal{I}$ is the set of partial transversals of $\mathcal{A}$, then $\mathcal{I}$ is the collection of independent sets of a matroid on $S$.
\end{proposition}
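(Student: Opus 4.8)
The plan is to prove that the partial transversals of a finite sequence $\mathcal{A} = (A_1, \ldots, A_m)$ of subsets of $S$ form the independent sets of a matroid on $S$; this is the classical transversal matroid construction, and the cleanest route is to realize it as the restriction of a well-understood matroid coming from a bipartite graph. First I would set up the bipartite incidence graph $H$ with parts $S$ and $[m]$, putting an edge between $s \in S$ and $i \in [m]$ exactly when $s \in A_i$. A subset $I \subseteq S$ is a partial transversal of $\mathcal{A}$ precisely when $I$ is saturated by some matching of $H$, i.e.\ when there is a matching of $H$ covering every vertex of $I$. So the claim reduces to: the family of subsets of one side of a bipartite graph that can be covered by a matching is a matroid on that side. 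This is the transversal matroid $M[H]$ and I would cite it via~\cite{Oxley2011}, but since the paper may prefer a self-contained argument, I would also sketch the direct verification.

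The main work is checking the two (really three) axioms. The empty set is a partial transversal (take $J = \emptyset$), giving nonemptiness. Hereditarity is immediate: if $I$ is covered by a matching $N$ and $I' \subseteq I$, then the sub-matching of $N$ restricted to edges with an endpoint in $I'$ covers $I'$, so $I'$ is a partial transversal. The real content is the exchange axiom: given partial transversals $I_1, I_2$ with $|I_1| < |I_2|$, I must find $x \in I_2 \setminus I_1$ with $I_1 \cup \{x\}$ a partial transversal. Here I would take matchings $N_1, N_2$ of $H$ saturating $I_1$ and $I_2$ respectively, form the symmetric difference $N_1 \triangle N_2$, which decomposes into alternating paths and even cycles, and use a standard counting/parity argument: since $|I_2 \cap V(N_2)| = |I_2| > |I_1| = |I_1 \cap V(N_1)|$, some component $P$ of $N_1 \triangle N_2$ is a path with more $N_2$-edges than $N_1$-edges, hence a path whose two endpoints include a vertex $x \in I_2$ not saturated by $N_1$ (so $x \notin I_1$). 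Swapping along $P$ (replacing $N_1$-edges by $N_2$-edges on $P$) yields a matching $N_1' = N_1 \triangle P$ that still saturates all of $I_1$ and now also saturates $x$; thus $I_1 \cup \{x\}$ is a partial transversal, as required.

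I expect the augmenting-path/parity bookkeeping to be the only delicate step — specifically, arguing that the endpoint of the useful alternating component genuinely lies in $I_2 \setminus I_1$ rather than in $[m]$, and that the swap does not break coverage of any previously-saturated vertex of $I_1$. Both follow from the fact that $N_1$ and $N_2$ are matchings (so each vertex has degree at most $1$ in each, and degree at most $2$ in the symmetric difference) together with the choice of $P$ as a path-component with a surplus of $N_2$-edges; a vertex of $I_1$ saturated by $N_1$ either is untouched by $P$ or lies in the interior of $P$ and remains saturated after the swap. Finally, I would remark that since $\mathcal{A}$ may have repeated terms, nothing changes: repeated sets only add parallel edges on the $[m]$ side of $H$, which is harmless for the matching argument. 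This also matches the convention already fixed in the paper that the indices of $\mathcal{A}$ need not be distinct while the elements of a transversal are. Hence $\mathcal{I}$ is the collection of independent sets of a matroid on $S$, completing the proof of \autoref{prop:transversalmatroid}.
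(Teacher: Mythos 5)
The paper states this proposition without proof, treating it as a classical fact (citing Oxley's book and Schrijver's compendium for background) and adding only the remark that partial transversals correspond to matchings in the bipartite incidence graph between $\mathcal{A}$ and $S$. Your argument---building that bipartite graph, reducing to matchability, and verifying the exchange axiom via the symmetric-difference/alternating-path parity argument---is exactly the standard proof underlying that remark, and it is correct.
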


The matroid obtained in \autoref{prop:transversalmatroid} is called a \emph{transversal matroid}.
We use $M[\mathcal{A}]$ to denote the transversal matroid whose independent sets are the partial transversals of $\mathcal{A}$.
We remark that partial transversals correspond to a matching in a bipartite graph with the sets of $\mathcal{A}$ on one part and $S$ on the other, where $A\in \mathcal{A}$ is connected to all of its elements.
The next result follows directly from Ore~\cite{Ore1955}.

\begin{proposition} \label{prop:transversalrank}
    Let $\mathcal{A} = (A_j \mid A\in J)$ be a sequence of subsets of a set $S$.
    If $M[\mathcal{A}]$ has rank function $r$ and $U\subseteq S$, then
    $$ r(U) = \min_{I\subseteq J} \{|J\setminus I| + |\bigcup_{i\in I} (A_i\cap U)|\}.$$
    Furthermore, a maximum size partial transversal of $\mathcal{A}$ inside $U$ can be computed in polynomial time.
\end{proposition}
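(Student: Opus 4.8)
The plan is to derive Proposition~\ref{prop:transversalrank} from the classical defect version of Hall's theorem (as stated by Ore~\cite{Ore1955}) applied to the bipartite graph representation of the transversal matroid $M[\mathcal{A}]$. First I would set up the bipartite graph $H$ with one side being the (multiset of) indices $J$, the other side being $S$, and an edge from $j \in J$ to $s \in S$ whenever $s \in A_j$. As remarked after \autoref{prop:transversalmatroid}, partial transversals of $\mathcal{A}$ correspond exactly to those subsets of $S$ that are saturated by some matching of $H$ that matches them into $J$; hence for $U \subseteq S$, the rank $r(U)$ equals the maximum size of a matching in the subgraph $H[J \cup U]$ induced on $J$ and $U$, i.e., the maximum number of elements of $U$ that can be simultaneously matched to distinct indices.

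The key step is then a König/Ore-type computation of this matching number. By König's theorem (or directly by Ore's defect formula), the maximum matching in $H[J \cup U]$ saturating a subset of $U$ has size $|J| - \mathrm{def}$, where $\mathrm{def} = \max_{I \subseteq J}\bigl(|I| - |N_H(I) \cap U|\bigr)$ is the deficiency; writing $N_H(I) \cap U = \bigcup_{i \in I}(A_i \cap U)$ and substituting $|I| = |J| - |J \setminus I|$, we get
$$ r(U) = |J| - \max_{I \subseteq J}\Bigl(|I| - \bigl|\bigcup_{i\in I}(A_i\cap U)\bigr|\Bigr) = \min_{I\subseteq J}\Bigl(|J\setminus I| + \bigl|\bigcup_{i\in I}(A_i\cap U)\bigr|\Bigr), $$
which is exactly the claimed formula. (One should also observe the complementary substitution $I \leftrightarrow J \setminus I$ does not matter since the minimum ranges over all subsets; a short sanity check with $U = S$ recovers the usual rank formula for transversal matroids.) Finally, the algorithmic claim follows because a maximum matching in the bipartite graph $H[J \cup U]$ can be found in polynomial time by standard augmenting-path methods, and such a matching directly yields a maximum-size partial transversal of $\mathcal{A}$ contained in $U$.

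I expect the main (minor) obstacle to be purely bookkeeping: being careful that $\mathcal{A}$ is a \emph{sequence} rather than a set, so that repeated sets $A_i$ contribute distinct indices $i \in J$ on the index side of $H$ — this is what makes the min-max formula range over subsets $I \subseteq J$ of indices rather than over subfamilies, and it is the reason the statement is phrased with $J$. Once the bipartite model is set up with indices (not sets) as vertices, everything reduces to quoting König's theorem / Ore's defect formula and the polynomial-time bipartite matching algorithm, so there is no substantial difficulty beyond this indexing care and the routine rewriting of the deficiency expression into the stated form.
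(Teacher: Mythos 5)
Your proof is correct and takes essentially the same route as the paper, which simply states that the result ``follows directly from Ore~\cite{Ore1955}'': you flesh out exactly that derivation by modeling $M[\mathcal{A}]$ as a bipartite graph on $J$ and $S$, applying Ore's defect formula to $H[J\cup U]$, and rewriting $|J|-\max_{I\subseteq J}(|I|-|\bigcup_{i\in I}(A_i\cap U)|)$ as the stated minimum, with the algorithmic claim following from polynomial-time bipartite matching.
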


To simplify notation, by a \emph{family of sets} we mean a sequence of sets in the sense that we allow repetitions ef elements with different indices, but we allow set theoretic notation where such operations should be considered on the indices of the sequence.
To make this observation clear, let $\mathcal{B}_I = (B_i \mid i\in I)$ denote a sequence of sets indexed by $I$, which we call a family of sets $\mathcal{B}_I = \{B_i \mid i\in I\}$
Thus, we use $B_i\in \mathcal{B_I}$ when $i\in I$ and $\mathcal{B}_J \subseteq \mathcal{B}_K$ when $J\subseteq K$, i.e., $\mathcal{B}_J$ is a subsequence of $\mathcal{B}_K$.
We use $|\mathcal{B}_I|$ to denote $|I|$, i,e., the number of elements in the sequence.
Furthermore, when $J\subseteq K$, we use $\mathcal{B}_K \setminus \mathcal{B}_J$ to denote $\mathcal{B}_{K\setminus J}$.

\begin{proposition}\label{prop:gammoidmatroid}
    Let $G$ be a digraph and $X, Y \subseteq V(G)$.
    If $\mathcal{I} = \{\sink(\mathcal{P}) \mid \mathcal{P}$ is a collection of disjoint $X \to Y \text{ paths}\}$, then $\mathcal{I}$ is a collection of independent sets of a matroid on $Y$.
\end{proposition}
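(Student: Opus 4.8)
\textbf{Proof plan for \autoref{prop:gammoidmatroid}.}

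The plan is to recognize $\mathcal{I}$ as the collection of independent sets of a \emph{gammoid}, which is the standard name for exactly this construction, and to give a self-contained verification of the matroid axioms by reducing everything to Menger's Theorem (\autoref{thm:Menger}) applied on $G$ together with suitable modifications of the sets $X$ and $Y$. Concretely, I would fix the ground set $Y$ and check the three independence axioms: (i) $\emptyset \in \mathcal{I}$, which is immediate since the empty collection of paths has $\sink(\emptyset) = \emptyset$; (ii) downward closure, i.e. if $Z \in \mathcal{I}$ and $Z' \subseteq Z$, then $Z' \in \mathcal{I}$; and (iii) the exchange axiom: if $Z_1, Z_2 \in \mathcal{I}$ with $|Z_1| < |Z_2|$, then there is $y \in Z_2 \setminus Z_1$ with $Z_1 \cup \{y\} \in \mathcal{I}$.

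For downward closure, given a collection $\mathcal{P}$ of disjoint $X \to Y$ paths with $\sink(\mathcal{P}) = Z$, I would simply discard the paths whose last vertex lies in $Z \setminus Z'$; note this uses that the paths are pairwise disjoint, so removing some of them leaves the rest disjoint and still $X \to Y$ paths, and the surviving sinks are exactly $Z'$. For the exchange axiom, the cleanest route is to first establish the characterization
$$ \max\{|Z| : Z \in \mathcal{I}\} = \text{minimum size of an } (X, Y)\text{-separator},$$
and more generally, for any $Y' \subseteq Y$, that $Y'$ contains a member of $\mathcal{I}$ of size $|Y'|$ if and only if there are $|Y'|$ disjoint $X \to Y'$ paths, which by \autoref{thm:Menger} happens iff every $(X, Y')$-separator has size at least $|Y'|$. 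Given $Z_1, Z_2 \in \mathcal{I}$ with $|Z_1| < |Z_2|$, take witnessing disjoint path systems $\mathcal{P}_1$ (ending exactly in $Z_1$) and $\mathcal{P}_2$ (ending exactly in $Z_2$); I would then argue on the union $\mathcal{P}_1 \cup \mathcal{P}_2$, viewed as a subgraph, that since $|\mathcal{P}_2| > |\mathcal{P}_1|$ there must be a vertex of $Z_2 \setminus Z_1$ reachable along an alternating structure, and perform a standard augmenting-path reroute to obtain a disjoint $X \to Y$ path system ending exactly in $Z_1 \cup \{y\}$ for some $y \in Z_2 \setminus Z_1$.

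The main obstacle will be the exchange axiom, specifically making the augmenting argument precise: disjoint directed paths do not compose as cleanly as flows, so I expect the smoothest formulation is to translate into an integral unit-capacity flow problem (split each vertex into an in-copy and out-copy joined by a capacity-one arc, add a super-source dominating $X$ and a super-sink dominated by $Y$) and invoke the augmenting-path theory there, or equivalently to cite directly that gammoids are matroids (this is Perfect's theorem, available via \cite{Oxley2011,Schrijver2002}) and observe that $\mathcal{I}$ is precisely the gammoid defined by $(G, X, Y)$. I would likely present the short citation-based proof in the main text and, if space permits, sketch the Menger-based augmentation to keep the paper self-contained, since this same mechanism underlies the later min-max dualities.
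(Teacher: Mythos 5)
The paper gives no proof of this proposition: it is stated as a classical fact (gammoids form matroids, Perfect's theorem), implicitly justified by the blanket references to \cite{Oxley2011,Schrijver2002} at the start of the subsection, so your primary, citation-based suggestion matches the paper's treatment exactly. Your backup augmenting-path sketch is sound in outline, but if you were to write it out note one subtlety: a generic augmenting path in the residual network of the flow induced by $\mathcal{P}_1$ could terminate at an arbitrary vertex of $Y$, not necessarily one in $Z_2 \setminus Z_1$, so you should extract the augmenting path from the decomposition of the flow difference $f_2 - f_1$ (equivalently, restrict the super-sink's in-arcs to $Z_1 \cup Z_2$); with that precaution and the vertex-splitting reduction you describe, the exchange axiom goes through.
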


The matroid obtained in \autoref{prop:gammoidmatroid} is called a \emph{gammoid} and is denoted by $M(G, X, Y)$.
The rank function of a gammoid can be derived directly from \autoref{thm:Menger}.

\begin{proposition} \label{prop:gammoidrank}
    Let $G$ be a digraph and $X, Y \subseteq V(G)$.
    If $M(G, X, Y)$ has rank function $r$ and $U\subseteq Y$, then $r(U) = \min\{|X| \mid X \text{ is an } (X, U) \text{-separator}\}$.
\end{proposition}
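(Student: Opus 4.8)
The plan is to unwind the definition of the rank function of $M(G,X,Y)$ and reduce the claim to Menger's Theorem (\autoref{thm:Menger}). By definition, $r(U)$ is the largest size of an independent subset of $U$, and by \autoref{prop:gammoidmatroid} a set $I\subseteq Y$ is independent in $M(G,X,Y)$ exactly when $I=\sink(\mathcal{P})$ for some collection $\mathcal{P}$ of pairwise disjoint $X\to Y$ paths. So the first step is to translate "independent subset of $U$" into "collection of disjoint paths ending inside $U$": if $I\subseteq U$ is independent, pick $\mathcal{P}$ with $\sink(\mathcal{P})=I$; since the paths of $\mathcal{P}$ are pairwise vertex-disjoint they have pairwise distinct last vertices, so $|\mathcal{P}|=|\sink(\mathcal{P})|=|I|$, and all their last vertices lie in $U$. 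Conversely, given any collection $\mathcal{P}$ of disjoint $X\to Y$ paths, the subcollection consisting of the paths whose last vertex lies in $U$ witnesses that $\sink(\mathcal{P})\cap U$ is an independent subset of $U$ of that size. Hence $r(U)$ equals the maximum number of pairwise disjoint $X\to Y$ paths all of whose last vertices belong to $U$.

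The second step is the trivial but necessary observation that, since $U\subseteq Y$, a path is an $X\to Y$ path with last vertex in $U$ if and only if it is an $X\to U$ path in the sense of \autoref{sec:preliminaries} (the constraint is only on the first and last vertices, not on the internal ones). Therefore $r(U)$ equals the maximum size of a collection of disjoint $X\to U$ paths. Applying \autoref{thm:Menger} with $A=X$ and $B=U$, this maximum equals the minimum size of an $(X,U)$-separator, which is precisely the claimed identity. (I would use a fresh symbol for the separator in the write-up to avoid the overloaded $X$ in the statement.)

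There is essentially no obstacle here: the proof is a direct translation. The only point needing a little care is the bookkeeping that the size of an independent set of the gammoid equals the number of paths realizing it, which uses vertex-disjointness to guarantee distinct endpoints, together with the remark that passing to the subcollection of paths ending in $U$ preserves pairwise disjointness. Everything else is immediate from \autoref{prop:gammoidmatroid} and Menger's Theorem, matching the remark in the text that the rank function of a gammoid is derived directly from \autoref{thm:Menger}.
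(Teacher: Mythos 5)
Your proof is correct, and it is exactly the argument the paper has in mind: the text states only that the rank function of a gammoid ``can be derived directly from \autoref{thm:Menger}'' and gives no further details, and you fill in those details in the natural way (unwind independence via \autoref{prop:gammoidmatroid}, note that restricting to paths ending in $U$ preserves disjointness, then invoke Menger with $A=X$, $B=U$). Your side remark about renaming the separator variable to avoid the clash with the source set $X$ is also apt.
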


Let $M_1 = (E_1, \mathcal{I}_1), \ldots,M_k = (E_k, \mathcal{I}_k)$ be matroids.
The \emph{union} of these matroids is denoted by $M_1 \vee \cdots \vee M_k = (E_1\cup \cdots \cup E_k, \mathcal{I}_1 \vee \cdots \vee \mathcal{I}_k)$
where $\mathcal{I}_1 \vee \cdots \vee \mathcal{I}_k = \{I_1 \cup \cdots \cup I_k \mid I_1 \in \mathcal{I}_1, \ldots , I_k\in \mathcal{I}_k\}$.
We remark that independent sets in matroids forms a hereditary family so the union of independent sets in $\mathcal{I}_1 \vee \cdots \vee \mathcal{I}_k$ may be considered disjoint.

For the following two results, an \emph{independence oracle} for a matroid $M$ is an algorithm which decides if a subset of $E(M)$ is independent in $M$.
We note that when a matroid $M$ is given as input to an algorithm, it is assumed that the input contains its ground set and an independence oracle for $M$, but $\mathcal{I}(M)$ is not explicitly provided.
Thus, when talking about a polynomial time algorithm for matroids, it means the algorithm runs in polynomial time provided the independence oracle also runs in polynomial time.

\begin{proposition}[Matroid union theorem]
    If $M_1 = (E_1, \mathcal{I}_1), \ldots,M_k = (E_k, \mathcal{I}_k)$ are matroids with rank functions $r_1, \ldots, r_k$, then $M_1 \vee \cdots \vee M_k$ is a matroid with rank function $r$ given by
    $$ r(U) = \min_{T\subseteq U} \{|U\setminus T| + \sum_{i\in [k]}r_i(T\cap E_i)\} $$
    for $U \subseteq E_1 \cup \cdots \cup E_k$.
    Furthermore, for $U\subseteq E_1 \cup \cdots \cup E_k$, $r(U)$ and a set $T$ achieving equality in the above formula can be computed in polynomial time given polynomial time independence oracles for $M_1, \ldots,M_k$.
\end{proposition}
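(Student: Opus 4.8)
\medskip
\noindent This is the classical Matroid Union Theorem (Nash-Williams, Edmonds), so the natural move is simply to cite~\cite{Schrijver2002,Oxley2011}; but if one wants a self-contained argument, the plan is to follow the standard submodular-function route while tracking the algorithmic claims. First I would reduce to pairwise disjoint ground sets: replace each $M_i$ by an isomorphic copy on a fresh copy of $E_i$, form the direct sum $N = M_1 \oplus \cdots \oplus M_k$ on the disjoint union, and let $\pi$ be the map that identifies the copies of each element. Then $M_1 \vee \cdots \vee M_k$ is exactly the image matroid $\pi(N)$, whose independent sets are the sets $\pi(J)$ with $J \in \mathcal{I}(N)$ and $\pi|_J$ injective; in particular this already shows $M_1 \vee \cdots \vee M_k$ is a matroid, by the classical fact that images of matroids under maps are matroids.

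The core computational step is to verify the rank formula. Set $f(U) = \sum_{i\in[k]} r_i(U\cap E_i)$; as a sum of restrictions of matroid rank functions it is non-negative, integer-valued, monotone, submodular, with $f(\emptyset)=0$. Put $r(U) = \min_{T\subseteq U}\{|U\setminus T| + f(T)\}$. I would then check directly that $r$ satisfies the matroid rank axioms: $r(U)\le|U|$ and $r(\emptyset)=0$ are immediate; the unit-increase property $r(U\setminus e)\le r(U)\le r(U\setminus e)+1$ holds because an optimum $T$ for $U\setminus e$ remains feasible for $U$ at the cost of one extra element of $U\setminus T$; monotonicity follows by intersecting an optimum $T$ for the larger set with the smaller one and using monotonicity of $f$; and submodularity of $r$ follows from submodularity of $f$ together with the elementary identity $|U_1\setminus T_1| + |U_2\setminus T_2| = |(U_1\cup U_2)\setminus(T_1\cup T_2)| + |(U_1\cap U_2)\setminus(T_1\cap T_2)|$, valid whenever $T_i\subseteq U_i$, applied to optima $T_1,T_2$.

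It then remains to match the independent sets of $r$ with $\mathcal{I}_1\vee\cdots\vee\mathcal{I}_k$. For the easy inclusion, given $I = I_1\cup\cdots\cup I_k$ with $I_i\in\mathcal{I}_i$, I would first assume (using heredity, by assigning each element of $I$ to one part containing it) that the $I_i$ partition $I$; then for every $T\subseteq I$ one gets $f(T) + |I\setminus T| \ge \sum_i |T\cap I_i| + |I\setminus T| = |T| + |I\setminus T| = |I|$, so $r(I)=|I|$. The converse, that $r(I)=|I|$ forces $I$ to be a union of independent sets, I would obtain from the projection picture: $r$ coincides with the rank function of $\pi(N)$ (both equal the same deficiency-type minimum), and an independent set of $\pi(N)$ of size $|I|$ inside $I$ lifts along $\pi$ to an independent set of $N$ whose components in the summands $M_i$ exhibit $I \in \mathcal{I}_1\vee\cdots\vee\mathcal{I}_k$. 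Finally, for the algorithmic statements, the value $r(U)$, a minimizing $T$, and a maximum-size member of $\mathcal{I}_1\vee\cdots\vee\mathcal{I}_k$ inside $U$ are all delivered by the standard matroid-union augmenting-path algorithm run on the exchange digraph of $N$, which is polynomial given independence oracles for the $M_i$; when no augmenting path exists, the set of vertices reachable in the exchange digraph exposes the tight set $T$.

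The step I expect to be the main obstacle is the converse inclusion in the independent-set characterization made fully rigorous --- equivalently, showing the minimum in the rank formula is attained simultaneously with an explicit optimal cover --- since that is where the projection/lifting argument (or, alternatively, a matroid-intersection deficiency argument) does the real work; the rest is routine submodularity bookkeeping. Given that the statement is entirely classical, in the final write-up I would most likely just invoke~\cite{Schrijver2002} for both the min-max formula and the polynomial-time algorithm.
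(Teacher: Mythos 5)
The paper does not prove this proposition at all: it is stated as background, with the reader pointed to~\cite{Oxley2011} and~\cite{Schrijver2002} for the classical theory, and that is exactly what your final sentence proposes. So your plan to cite is the correct match for what the paper does.

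Your self-contained sketch is also a sound outline of one standard proof. A small bookkeeping note: the sentence on the unit-increase property only establishes $r(U)\le r(U\setminus e)+1$; the lower inequality $r(U\setminus e)\le r(U)$ is precisely the monotonicity you then prove, so the two claims should be swapped or merged. The elementary identity $|U_1\setminus T_1| + |U_2\setminus T_2| = |(U_1\cup U_2)\setminus(T_1\cup T_2)| + |(U_1\cap U_2)\setminus(T_1\cap T_2)|$ for $T_i\subseteq U_i$ does hold (checked elementwise), and your submodularity argument from it is correct. The one place where the sketch is not truly self-contained is exactly the step you flag: the converse inclusion invokes, as a black box, that the image $\pi(N)$ of a matroid under a map is a matroid with rank $\min_{T\subseteq U}\{|U\setminus T| + r_N(\pi^{-1}(T))\}$, which is itself a classical fact (essentially Perfect's/Rado's induced-matroid theorem) at the same depth as the union theorem you are proving. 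That is not a gap in correctness, but it means the argument passes the real work to another cited result rather than discharging it; since the paper invokes the union theorem as background anyway, this is perfectly acceptable here, and your concluding instinct to simply cite~\cite{Schrijver2002} for both the min-max formula and the algorithm is the right call.
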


\begin{proposition}[Matroid intersection theorem]
    If $M_1 = (E, \mathcal{I}_1)$ and $M_2 = (E, \mathcal{I}_2)$ are matroids with rank functions $r_1$ and $r_2$, then the maximum size of a set in $\mathcal{I}_1 \cap \mathcal{I}_2$ is equal to
    $$\min_{U\subseteq E} \{r_1(U) + r_2(E\setminus U)\}.$$
    Furthermore, a maximum size common independent set and a $U\subseteq E$ achieving equality in the above formula can be computed in polynomial time given polynomial time independence oracles for $M_1$ and $M_2$.
\end{proposition}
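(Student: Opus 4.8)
The plan is to prove the two directions of the min-max equality separately, obtaining the algorithm as a byproduct of the proof of the harder direction. For \emph{weak duality}, observe that for any $I \in \mathcal{I}_1 \cap \mathcal{I}_2$ and any $U \subseteq E$, the set $I \cap U$ is independent in $M_1$ and $I \cap (E \setminus U)$ is independent in $M_2$, so $|I| = |I \cap U| + |I \cap (E \setminus U)| \le r_1(U) + r_2(E \setminus U)$. Hence $\max \le \min$, and moreover it suffices to exhibit a single pair $(I, U)$ with $I \in \mathcal{I}_1 \cap \mathcal{I}_2$ and $|I| = r_1(U) + r_2(E \setminus U)$: such an $I$ is then a maximum common independent set and such a $U$ attains the minimum. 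The rest of the argument produces such a pair via augmenting paths in an auxiliary digraph.

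Given $I \in \mathcal{I}_1 \cap \mathcal{I}_2$, I would build the \emph{exchange digraph} $D_I$ on vertex set $E$: put an arc $(y,x)$ whenever $y \in E \setminus I$, $x \in I$ and $(I \setminus \{x\}) \cup \{y\} \in \mathcal{I}_1$, and an arc $(x,y)$ whenever $x \in I$, $y \in E \setminus I$ and $(I \setminus \{x\}) \cup \{y\} \in \mathcal{I}_2$; let $X_1 = \{y \in E \setminus I : I \cup \{y\} \in \mathcal{I}_1\}$ and $X_2 = \{y \in E \setminus I : I \cup \{y\} \in \mathcal{I}_2\}$. If $X_1 \cap X_2 \ne \emptyset$, then $I$ can be extended directly. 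Otherwise, if $D_I$ contains a directed $X_1 \to X_2$ path, pick a \emph{shortest} one $P = y_0 x_1 y_1 \cdots x_t y_t$ and set $I' = I \bigtriangleup V(P)$; the key claim is that $I' \in \mathcal{I}_1 \cap \mathcal{I}_2$ and $|I'| = |I| + 1$. This is the one genuinely technical point, and I expect it to be the \textbf{main obstacle}: it rests on the matroid exchange lemma stating that if $J$ is independent, $|J| = |I|$, and the bipartite graph on $(I \setminus J, J \setminus I)$ with edges $\{x,y\}$ for which $(I \setminus \{x\}) \cup \{y\}$ is independent has a \emph{unique} perfect matching, then $J$ is independent — applied to each of $M_1$ and $M_2$, where minimality of $P$ (absence of shortcuts) is exactly what guarantees uniqueness of the relevant matching.

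Iterating the augmentation, we arrive at a common independent set $I$ for which $D_I$ has no $X_1 \to X_2$ path. Let $U$ be the set of vertices of $D_I$ from which $X_2$ is reachable (so $X_2 \subseteq U$); then $X_1 \cap U = \emptyset$ and no arc of $D_I$ enters $U$ from $E \setminus U$. I would then show $r_2(U) = |I \cap U|$ and $r_1(E \setminus U) = |I \cap (E \setminus U)|$: for the former, if $(I \cap U) \cup \{y\}$ were independent in $M_2$ for some $y \in U \setminus I$ with $y \notin X_2$, the unique circuit of $I \cup \{y\}$ in $M_2$ would have to contain some $x \in I \setminus U$, yet every such $x$ carries an arc $(x,y)$ into $U$ and hence lies in $U$, a contradiction; the latter is symmetric, using arcs of the first type and $X_1 \cap U = \emptyset$. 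Adding the two equalities gives $r_1(E \setminus U) + r_2(U) = |I|$, i.e., with $U_0 := E \setminus U$, the pair $(I, U_0)$ we wanted, completing both the min-max equality and the certification.

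Finally, for the algorithmic claim: constructing $D_I$ together with $X_1, X_2$ costs $\Ocal(|E|^2)$ independence-oracle calls, a shortest $X_1 \to X_2$ path (or, at the end, the reachability set $U$) is found by breadth-first search in $\Ocal(|E|^2)$ time, and each augmentation strictly increases $|I|$, so at most $\min\{r_1(E), r_2(E)\} \le |E|$ iterations occur. Hence a maximum common independent set and a minimizing set $U_0$ are produced in time polynomial in $|E|$ and in the running time of the oracles.
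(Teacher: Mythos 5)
The paper does not prove this proposition; it is quoted as classical background (it is Edmonds' matroid intersection theorem, available e.g.\ in Schrijver's book, which the paper cites), so there is no in-paper argument to compare against. Your augmenting-path plan is the standard textbook route to it, but the orientation of your exchange digraph is reversed relative to the one that makes the argument work, and this is not cosmetic: both the augmentation step and the certification step fail as written.

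In the correct construction an arc \emph{leaving} $I$ carries the $M_1$-exchange condition and an arc \emph{entering} $I$ carries the $M_2$-exchange condition; then on a shortest $X_1\to X_2$ path $y_0x_1\cdots x_ty_t$ the $M_1$-exchange arcs match $\{x_1,\dots,x_t\}$ with $\{y_1,\dots,y_t\}$, so the leftover endpoint $y_0\in X_1$ is exactly the element addable in $M_1$ (and symmetrically $y_t\in X_2$ for $M_2$). You have swapped these roles: with your arcs $(y,x)$ encoding $M_1$-exchanges and $(x,y)$ encoding $M_2$-exchanges, the $M_1$-exchange arcs on $P$ match $\{x_1,\dots,x_t\}$ with $\{y_0,\dots,y_{t-1}\}$, so the leftover endpoint is $y_t\in X_2$, about which $M_1$ knows nothing; the unique-matching lemma then only gives $M_1$-independence of $(I\setminus\{x_1,\dots,x_t\})\cup\{y_0,\dots,y_{t-1}\}$, not of $I\triangle V(P)$. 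A concrete failure: on $E=\{a,b,c,d\}$ take $\mathcal{I}_1=\{S\subseteq\{a,b,c\}\colon |S|\le 2\}$ (so $d$ is a loop of $M_1$) and $\mathcal{I}_2=\{\emptyset,\{a\},\{d\},\{a,d\}\}$ (so $b,c$ are loops of $M_2$), with $I=\{a\}$. Then $X_1=\{b,c\}$, $X_2=\{d\}$, the only arcs in your $D_I$ are $(b,a),(c,a),(a,d)$, and the shortest $X_1\to X_2$ path $b,a,d$ gives $I\triangle V(P)=\{b,d\}$, which is independent in \emph{neither} matroid, while the true optimum is $1$. The certification has the mirror-image defect: with $E=\{a\}$, $\mathcal{I}_1=\{\emptyset\}$, $\mathcal{I}_2=\{\emptyset,\{a\}\}$, $I=\emptyset$, your $U$ is $\{a\}$ and $r_2(U)=1\neq 0=|I\cap U|$; indeed your case analysis for $r_2(U)=|I\cap U|$ only treats $y\notin X_2$, but with your choice of $U$ one has $X_2\subseteq U$, so the excluded case is exactly the one that breaks. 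The repair is to interchange $M_1$ and $M_2$ in the two arc rules (equivalently, search for shortest $X_2\to X_1$ paths in your $D_I$ and take $U$ to be the set of vertices from which $X_1$ is reachable); after that swap your unique-matching argument and reachability certificate go through and recover Edmonds' polynomial-time algorithm as intended.
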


We remark that if we can compute the rank function of a matroid in polynomial time, then we have a polynomial time independence oracle by testing whether the largest independent subset of $U$ has size $|U|$.
Since both gammoids and transversal matroids have polynomial time algorithms for computing the rank of a set, then they also have polynomial time independence oracles.

\subsection{Parameterized complexity}\label{sec:param-compl}
We refer the reader to~\cite{Cygan2015,Downey2013} for background on parameterized complexity, and we define here only the most basic definitions.
A \emph{parameterized problem} is a language $L \subseteq \Sigma^* \times \mathbb{N}$.  For an instance $I=(x,k) \in \Sigma^* \times \mathbb{N}$, $k$ is called the \emph{parameter}.
A parameterized problem $L$ is \emph{fixed-parameter tractable} ({\FPT}) if there exists an algorithm $\mathcal{A}$, a computable function $f$, and a constant $c$ such that given an instance $I=(x,k)$, $\mathcal{A}$   (called an {\FPT} \emph{algorithm}) correctly decides whether $I \in L$ in time bounded by $f(k) \cdot |I|^c$. For instance, the \textsc{Vertex Cover} problem parameterized by the size of the solution is {\FPT}.
A parameterized problem $L$ is in {\sf XP} if there exists an algorithm $\mathcal{A}$ and two computable functions $f$ and $g$ such that given an instance $I=(x,k)$, $\mathcal{A}$  (called an {\sf XP} \emph{algorithm}) correctly decides whether $I \in L$ in time bounded by $f(k) \cdot |I|^{g(k)}$. For instance,  the \textsc{Clique} problem parameterized by the size of the solution is in  {\sf XP}.

Within parameterized problems, the class {\sf W}[1] may be seen as the parameterized equivalent to the class {\sf NP} of classical decision problems. Without entering into details, a parameterized problem being {\sf W}[1]-\emph{hard} can be seen as a strong evidence that this problem is {\sl not} {\FPT}.
The canonical example of {\sf W}[1]-hard problem is \textsc{Clique}  parameterized by the size of the solution.

\subsection{Brambles and directed tree-width}

The \emph{directed tree-width} of digraphs was introduced by Johnson et al.~\cite{Johnson2001} as a directed analogue of tree-width of undirected graphs.
Informally, the directed tree-width $\dtw(G)$ of a digraph $G$ measures how close $G$ can be approximated by a DAG, and the formal definition immediately implies that $\dtw(G) = 0$ if and only if $G$ is an acyclic digraph (DAG).
Directed tree-width and arboreal decompositions are not explicitly used in this article and thus we refer the reader to~\cite{Johnson2001} for the formal definitions.
Here it suffices to mention a few known results.

In the same paper where they introduced directed tree-width, Johnson et al.~\cite{Johnson2001} showed that \textsc{$k$-DDP} can be solved in {\XP} time with parameters $k + \dtw(G)$.
\begin{proposition}[Johnson et al.~\cite{Johnson2001}]
\label{proposition:XP-algo-DDP}
The {\sc $k$-DDP} problem is solvable in time $n^{\Ocal(k+\dtw(G))}$.
\end{proposition}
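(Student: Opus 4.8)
The plan is to follow the classical route of Johnson et al.~\cite{Johnson2001} and reduce the problem to dynamic programming over an arboreal decomposition. Write $w = \dtw(G)$. First I would obtain an arboreal decomposition of $G$ of width $\Ocal(w)$. For this one invokes the approximation procedure of~\cite{Johnson2001}: given $G$ and a target $w'$, it runs in time $n^{\Ocal(w')}$ and either returns an arboreal decomposition of $G$ of width $\Ocal(w')$ or certifies that $\dtw(G) > w'$ (say, by exhibiting a bramble of large order). Running this for $w' = 1, 2, 4, 8, \dots$ and stopping at the first success costs $n^{\Ocal(w)}$ in total and produces a decomposition $(R, \beta, \gamma)$ of width $\Ocal(w)$, where $R$ is an arborescence, the bags $\beta(t)$ partition $V(G)$, and for every edge $e = (t',t)$ of $R$ the vertex set $V_t := \bigcup\{\beta(t'') : t'' \text{ in the subtree of } R \text{ rooted at } t\}$ is $\gamma(e)$-normal, meaning that no directed walk of $G$ with both ends in $V_t \setminus \gamma(e)$ visits a vertex outside $V_t \cup \gamma(e)$. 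Writing $W_t$ for the union of $\beta(t)$ with the guard sets of all edges of $R$ incident with $t$, the width bound gives $|W_t| = \Ocal(w)$.

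Next I would solve $k$-\textsc{DDP} by a bottom-up dynamic program on the rooted arborescence $R$. The key structural fact, which is precisely what $\gamma(e)$-normality provides, is that any hypothetical solution linkage $\{P_1, \dots, P_k\}$ interacts with a subtree region $V_t$ in a tame way: each $P_i$ restricted to $V_t$ decomposes into pairwise vertex-disjoint subpaths (\emph{segments}), each $P_i$ re-enters $V_t$ at most $\Ocal(w)$ times and only through vertices of $\gamma(e)$, and hence the endpoints of all segments lie in $\gamma(e) \cup \{s_j, t_j : j \in [k]\}$ together with at most $\Ocal(k)$ further ``boundary'' vertices at which the $P_i$ enter or leave $V_t$ for the first or the last time. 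I would then define the \emph{trace} of a linkage at $t$ to record: which pairs $(s_j, t_j)$ are already entirely linked inside $V_t$; the $\Ocal(k)$ boundary vertices, and for each vertex of $\gamma(e)$ whether it is unused, interior to a segment, or an endpoint of a segment; and the matching saying which pairs of these distinguished vertices are joined by a segment, together with the order in which each $P_i$ visits its distinguished vertices. Recording the boundary vertices costs $n^{\Ocal(k)}$, and the remaining combinatorial data lives on $\Ocal(k + w)$ distinguished vertices and so contributes a factor $2^{\Ocal((k+w)\log(k+w))} \le n^{\Ocal(k+w)}$; thus there are at most $n^{\Ocal(k+w)}$ traces per node.

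The dynamic program then computes, bottom-up, the set of \emph{realizable} traces at each node of $R$, i.e.\ the traces arising from some collection of pairwise vertex-disjoint paths inside $V_t$ realizing the prescribed segment pattern. At a leaf this is decided by brute force inside $\beta(t)$. At an internal node $t$ with children $t_1, \dots, t_d$ joined by edges $e_1, \dots, e_d$, a trace $\tau$ is realizable iff one can choose realizable traces $\tau_1, \dots, \tau_d$ at the children and route the required segments through $W_t$ so that the glued object is pairwise vertex-disjoint and has trace $\tau$ at $t$; this is a local computation because distinct subtrees $V_{t_i}, V_{t_j}$ meet only inside $W_t$ and each $V_{t_i}$ touches the rest of $G$ only through $\gamma(e_i) \subseteq W_t$. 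Folding the children in one at a time, while maintaining a running partial trace, keeps the per-node cost polynomial in the size of the tables, so the whole procedure runs in time $n^{\Ocal(k+w)} = n^{\Ocal(k + \dtw(G))}$. The instance is a {\sf yes}-instance iff the trace ``all $k$ pairs linked, all guard sets empty'' is realizable at the root, and an explicit linkage is reconstructed by the usual backtracking through the tables.

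The step I expect to be the main obstacle is making precise exactly how the $k$ solution paths can cross the frontier of a subtree: a single path may leave $V_t$ and return several times, so there is no clean separator on which to anchor the dynamic programming state, and it is the $\gamma(e)$-normality of $V_t \setminus \gamma(e)$ that saves the day by capping the number of re-entries at $\Ocal(w)$ and confining the re-entry vertices to $\gamma(e)$ --- this is what keeps the trace count at $n^{\Ocal(k+w)}$ instead of something exponential in $kw$. A second delicate point is carrying out the merge at a node of large degree without a blow-up, which is handled by the one-child-at-a-time aggregation sketched above; making the associated disjointness bookkeeping rigorous is the technical heart of the argument. Finally, the $n^{\Ocal(k)}$ factor is unavoidable: $k$-\textsc{DDP} is already {\sf W}[1]-hard on acyclic digraphs~\cite{Slivkins2010}, whose directed tree-width is $0$, so one cannot hope to move $k$ entirely into the multiplicative part of the running time (i.e.\ to an {\FPT} algorithm) even when $\dtw(G) = 0$.
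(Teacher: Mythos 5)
The paper does not give a proof of this proposition at all --- it is cited verbatim from Johnson et al.~\cite{Johnson2001}, and the present article simply uses it as a black box. So there is nothing in the paper to compare your argument against line by line; what I can do is assess it against the approach that~\cite{Johnson2001} actually takes, which is the one you try to reconstruct.

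Your high-level plan is indeed the one in~\cite{Johnson2001}: approximate an arboreal decomposition of width $\Ocal(\dtw(G))$, then run a dynamic program over the arborescence whose state records, for each subtree region $V_t$, the pattern in which the $k$ solution paths enter and leave $V_t$, the identity of the few boundary vertices, and which sub-linkages have already been completed inside $V_t$. You also correctly identify the two delicate points --- bounding the number of times a path can cross the frontier of $V_t$, and merging high-degree nodes one child at a time --- and correctly observe that the $n^{\Ocal(k)}$ factor is unremovable by the ${\sf W}[1]$-hardness on DAGs.

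One technical point in your sketch is stated incorrectly, although it does not change the final count. You write that a path re-enters $V_t$ ``only through vertices of $\gamma(e)$'' and, at the end, that $\gamma(e)$-normality is ``confining the re-entry vertices to $\gamma(e)$.'' That is not what normality gives you. $Z$-normality of $V_t\setminus Z$ (with $Z=\gamma(e)$) says that any directed walk that starts and ends in $V_t\setminus Z$ and uses a vertex outside $V_t\cup Z$ must \emph{meet} $Z$ somewhere along the way; the exit and re-entry vertices themselves can be arbitrary vertices of $V_t$, not necessarily guard vertices. What saves the day is the consequence you also invoke: since the $k$ paths are vertex-disjoint and every excursion out of $V_t$ burns at least one vertex of the guard set, the \emph{total} number of excursions across all $k$ paths is at most $|\gamma(e)|=\Ocal(w)$, so the total number of boundary crossings is $\Ocal(k+w)$, and the trace must record that many arbitrary vertices of $G$ --- contributing the $n^{\Ocal(k+w)}$ factor. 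Your ``$\Ocal(k)$ further boundary vertices'' phrasing undercounts this slightly (it should be $\Ocal(k+w)$ boundary vertices that need to be guessed explicitly), but the resulting exponent is the same, so the final bound $n^{\Ocal(k+\dtw(G))}$ is unaffected. With those adjustments your sketch is a faithful account of the cited result.
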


Notice that every instance of the $(k,c)$-\textsc{DDP} problem with $c \geq k$ is trivially solvable in polynomial time by simply checking for connectivity between each pair $(s_i, t_i)$.
Thus we can always assume that $c < k$.
It is easy to reduce the congested version to the disjoint version of \textsc{DDP}: it suffices to generate a new instance by making $c$ copies of each vertex $v$ of the input digraph,  each of them with the same in- and out-neighborhood as $v$.
A formal proof of this statement was given by Amiri et al.~\cite{AkhoondianAmiri2019}. 
Hence it follows that $(k,c)$-\textsc{DDP} is also {\XP} with parameters $k$ and $\dtw(G)$.
A direct proof of this statement is also possible by applying the same framework used to prove \autoref{proposition:XP-algo-DDP}, although such a proof is not given in~\cite{Johnson2001}.
A similar proof for a congested version of a \textsc{DDP}-like problem is given by Sau and Lopes in~\cite{Lopes2022}.
In any case, the following holds.
\begin{proposition}
\label{proposition:XP-algo-DDP-congestion}
The $(k,c)$-{\sc DDP} problem is solvable in time $(c \cdot n)^{\Ocal(c(k+\dtw(G)))}$.
\end{proposition}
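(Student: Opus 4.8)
The plan is to reduce the congested version to the disjoint version and then invoke \autoref{proposition:XP-algo-DDP}. First I would describe the standard gadget: given an instance $(G, (s_i,t_i)_{i\in[k]}, c)$ of $(k,c)$-\textsc{DDP}, construct a digraph $G'$ by replacing each vertex $v \in V(G)$ with $c$ copies $v^1, \ldots, v^c$, and for each edge $(u,v) \in E(G)$ adding all $c^2$ edges $(u^a, v^b)$ with $a,b \in [c]$. For the terminals, one can simply set $s_i' = s_i^1$ and $t_i' = t_i^1$ (any fixed copy works). A solution to $(k,c)$-\textsc{DDP} in $G$ lifts to $k$ pairwise disjoint paths in $G'$: if a vertex $v$ is used by $j \leq c$ of the congested paths, assign the $j$ visits to distinct copies of $v$; conversely, projecting disjoint paths in $G'$ back to $G$ by forgetting the copy index yields paths in $G$ where each $v$ is used at most $c$ times, one per copy. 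One subtlety to address is that projecting a path of $G'$ to $G$ could a priori create a closed walk rather than a simple path if two copies of the same vertex appear; but since the $k$ paths in $G'$ are vertex-disjoint, at most one copy of $v$ is used across all of them when restricting to a single path's internal behavior — actually a single path in $G'$ could visit $v^a$ and then later $v^b$, so the projection is a walk; this is fixed by the usual observation that a walk from $s_i$ to $t_i$ in $G$ contains a path from $s_i$ to $t_i$, and shortcutting only decreases congestion. I would spell this out carefully as it is the one genuinely non-automatic point.

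Next I would bound $\dtw(G')$ in terms of $\dtw(G)$. The key fact is that blowing up each vertex into a clique-like gadget of size $c$ multiplies the directed tree-width by at most a factor depending on $c$: concretely, $\dtw(G') = \Ocal(c \cdot (\dtw(G)+1))$, obtained by taking an arboreal decomposition of $G$ and replacing each bag by the union of the $c$ copies of its vertices. I would state this as a short lemma (or cite that such vertex-multiplication behaves well with respect to directed tree-width, analogously to the undirected case) rather than prove it in full, since only the asymptotic dependence matters for the running time.

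Finally, applying \autoref{proposition:XP-algo-DDP} to the instance $(G', (s_i', t_i')_{i \in [k]})$, which has $c \cdot n$ vertices and directed tree-width $\Ocal(c(\dtw(G)+1))$, gives a running time of $(c \cdot n)^{\Ocal(k + c(\dtw(G)+1))} = (c\cdot n)^{\Ocal(c(k + \dtw(G)))}$, using $c \geq 1$ to absorb the additive terms into the stated form. The main obstacle I anticipate is not the reduction itself (which is folklore, as the excerpt notes via \cite{AkhoondianAmiri2019}) but pinning down the precise bound on $\dtw(G')$: one must verify that the arboreal decomposition conditions (the ``guarding'' condition on the sets $W_e$) are preserved under taking $c$ copies, which requires a small but careful argument about how strong connectivity in $G' \setminus (\text{guard})$ relates to that in $G$. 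Everything else — the correctness of the path lifting/projection and the final arithmetic — is routine.
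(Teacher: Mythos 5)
Your approach --- the $c$-copy blow-up followed by an application of \autoref{proposition:XP-algo-DDP} --- is precisely the route the paper sketches (it cites Amiri et al.\ for the correctness of the reduction and remarks that a direct dynamic programming argument in the style of Johnson et al.\ or of Sau and Lopes is also possible). Your attention to the walk-to-path shortcutting under projection, to the blow-up of an arboreal decomposition, and the final exponent arithmetic are all the right points to address.

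The one genuine gap is the terminal encoding. Fixing $s_i' = s_i^1$ and $t_i' = t_i^1$ for every $i$ breaks when terminals coincide: if $s_i = s_j$ with $i \neq j$ then $s_i' = s_j'$, and no two vertex-disjoint paths in $G'$ can share a start vertex, even though in $G$ a vertex may legitimately be the source of up to $c$ of the paths (coincident or nearby terminals are one of the motivations for allowing congestion, so this is not a vacuous case; the same collision can occur between a source and a sink, e.g.\ $s_i = t_j$). To make the lifting direction correct you must assign \emph{distinct} copies to the distinct terminal roles of a given vertex --- for instance map the $r$-th occurrence of $v$ in the list $s_1, \ldots, s_k, t_1, \ldots, t_k$ to $v^r$, returning {\sf no} if some vertex occurs more than $c$ times --- or, more cleanly, first attach a fresh private in-pendant $\hat s_i$ to each $s_i$ and out-pendant $\hat t_i$ to each $t_i$ and use these pairwise distinct vertices as the terminals before blowing up; this adds only $\Ocal(k)$ vertices and does not change the asymptotics. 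With either fix your lifting argument goes through: since the paths in a $(k,c)$-DDP solution are simple, each visits any vertex $v$ at most once, so the at most $c$ indices $i$ with $v \in V(P_i)$ can be injected into $[c]$ compatibly with the terminal copy assignment.
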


For both the $k$-\textsc{DDP} and $(k,c)$-\textsc{DDP} problems,
(a small variation of) the result of Slivkins~\cite{Slivkins2010} implies that the {\XP} time is unlikely to be improvable to {\FPT}, even when restricted to DAGs (although the result in~\cite{Slivkins2010} concerns the \emph{edge}-disjoint version of $k$-\textsc{DDP}, it easily implies $\W$[1]-hardness of the disjoint version by noticing that the line digraph of a DAG is also a DAG).
To the best of our knowledge, it is not known if an algorithm with running time of the form $f(\dtw(G), k)\cdot n^{g(k)}$; i.e., an {\FPT} algorithm with parameter $\dtw(G)$ for fixed $k$, is possible for $(k,c)$-\textsc{DDP} with $c \geq 1$.

As it is the case with tree-width, Johnson et al.~\cite{Johnson2001} also introduced a dual notion for directed tree-width in the form of \emph{havens}.
However, although the duality in the undirected case is sharp, in the directed case it is only approximate: they showed that the directed tree-width of a digraph $G$ is within a constant factor (more precisely, a factor three) from the maximum order of a haven of $G$.
Since havens and (strict) \emph{brambles} are interchangeable in digraphs whilst paying only a constant factor for the transformation (see~\cite[Chapter 6]{Matthias2014} for example), we skip the definition of the former and focus only on the latter.

\begin{definition}[Brambles in digraphs]
A \emph{bramble} $\mathcal{B} = \{B_1, \ldots, B_\ell\}$ in a digraph $G$ is a collection of strong subgraphs of $G$ such that if $B, B' \in \mathcal{B}$ then $V(B) \cap V(B') \neq \emptyset$ or there are edges in $G$ from $V(B)$ to $V(B')$ and from $V(B')$ to $V(B)$.
We say that the elements of $\mathcal{B}$ are the \emph{bags} of $\mathcal{B}$.
A \emph{hitting set} of a bramble $\mathcal{B}$ is a set $C \subseteq V(G)$ such that $C \cap V(B) \neq \emptyset$ for all $B \in \mathcal{B}$. The \emph{order}  of a bramble $\mathcal{B}$, denoted by $\order(\mathcal{B})$,  is the minimum size of a hitting set of $\mathcal{B}$. The \emph{bramble number} of a digraph $G$, denoted by $\bn(G)$, is the maximum $k$ such that $D$ admits a bramble of order $k$.
A bramble $\mathcal{B}$ is said to be \emph{strict} if for all pairs $B, B' \in \mathcal{B}$ it holds that $V(B) \cap V(B') \neq \emptyset$.
For an integer $c \geq 1$ we say that $\mathcal{B}$ has \emph{congestion} if every vertex of $G$ appears in at most $c$ bags of $\mathcal{B}$.
\end{definition}
See \autoref{fig:bramble-example} for an example of a bramble.
Notice that if $\mathcal{B}$ is a bramble of congestion $c$ for some constant $c$, its \emph{order} increases together with its \emph{size}; i.e., $|\mathcal{B}|$.
More precisely, since every vertex of the host digraph  can hit at most $c$ elements of $\mathcal{B}$ it holds that $\order(\mathcal{B}) \geq \lceil|\mathcal{B}|/c\rceil$.
If $\mathcal{B}'\subseteq \mathcal{B}$ then we may say that $\mathcal{B}'$ is a \emph{subbramble} of $\mathcal{B}$.

\begin{figure}[h]
\centering
\scalebox{.8}{
\begin{tikzpicture}
\node[blackvertex] (c) at (0,0) {};

\node[blackvertex] (u1) at (-1.5,0) {};
\node[blackvertex] (u2) at (1.5,0) {};

\node[blackvertex] (v1) at (-.75,1) {};
\node[blackvertex] (v2) at (.75,1) {};

\node[blackvertex] (w1) at (-0.75,-.75) {};
\node[blackvertex] (w2) at (.75,-.75) {};
\node[blackvertex] (w3) at (0,-1.5) {};

\draw[arrow] (c) -- (u1);
\draw[arrow] (u1) -- (v1);
\draw[arrow] (v1) -- (c);

\draw[arrow] (c) -- (u2);
\draw[arrow] (u2) -- (v2);
\draw[arrow] (v2) -- (c);

\draw[arrow] (w1) -- (w2);
\draw[arrow] (w2) -- (w3);
\draw[arrow] (w3) -- (w1);

\draw[arrow] (w1) -- (c);
\draw[arrow] (c) -- (w2);

\node[draw, ellipse, fit=(c)(u1)(v1), inner sep = -.5pt, yshift = -2.5pt, label =180:{$B_1$}] {};
\node[draw, ellipse, fit=(c)(u2)(v2), inner sep = -.5pt, yshift = -2.5pt, label = 0:{$B_2$}] {};
\node[draw, ellipse, fit=(w1)(w2)(w3), inner sep = 0pt, label =0:{$B_3$}, yshift=2pt] {};

\end{tikzpicture}}%
\caption{Example of a bramble $\{B_1, B_2, B_3\}$ of order two.}
\label{fig:bramble-example}
\end{figure}
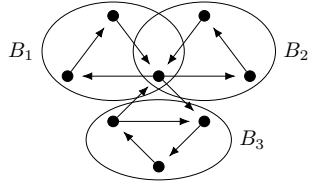

Johnson et al.~\cite{Johnson2001} gave an algorithm that, given a digraph $G$, either correctly decides that $\dtw(G) \leq 3k-2$ (also yielding an arboreal decomposition of $G$) or produces a bramble of order $\lfloor k/2 \rfloor$.
This was later improved to an {\FPT} algorithm by Campos et al.~\cite{Campos2022}.
Although the authors do not explicitly say that the produced bramble is strict, it is easy to verify that this is the case in their proof, and the same holds for the proof of~\cite{Johnson2001}.
\begin{proposition}[Campos et al.~\cite{Campos2022}]
\label{prop:FPT-dtw-or-bramble}
Let $G$ be a digraph and $t$ be a non-negative integer.
There is an algorithm running in time $2^{\Ocal(t \log t)}\cdot n^{\Ocal(1)}$ that either produces an arboreal decomposition of $G$ of width at most $3t-2$ or finds a strict bramble of order $\lfloor t/2 \rfloor$ in $G$.
\end{proposition}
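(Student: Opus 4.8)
This is the \FPT analogue of the \XP procedure of Johnson et al.~\cite{Johnson2001} recalled above, and the plan is to obtain it by speeding up that procedure's search step. First I would recall its skeleton: it builds an arboreal decomposition top-down, maintaining at each step a region of $G$ together with an interface set $W$ of size bounded in terms of $t$, and searching for a vertex set $X$ with $|X|\le t$ that is a \emph{balanced $W$-separator} of the region, meaning that every strong component of the region minus $X$ meets at most $2|W|/3$ vertices of $W$. When such an $X$ is found it becomes a bag of the decomposition and one recurses on the resulting pieces, each of which is smaller and inherits only a fixed fraction of $W$; the width bound $3t-2$ then comes out of the usual bookkeeping on the interfaces. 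If instead \emph{no} balanced $W$-separator of size $\le t$ exists, then $W$ is well-linked, which certifies a haven of order larger than $t$ and hence, by the haven/bramble correspondence (see~\cite[Chapter~6]{Matthias2014}), a bramble of order $\lfloor t/2\rfloor$ in $G$; I will also arrange for this bramble to be strict (see below).

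The only place where~\cite{Johnson2001} spends $n^{\Ocal(t)}$ time is the brute-force enumeration of the $\binom{n}{\le t}$ candidate sets $X$ at each node, and replacing this enumeration is the main obstacle. The plan is a bounded branching subroutine that, given the current region and its interface $W$, either returns a balanced $W$-separator of size at most $t$ or certifies that none exists, in time $2^{\Ocal(t\log t)}\cdot n^{\Ocal(1)}$. The branching is driven by \autoref{thm:Menger}: starting from a partial separator $X'$, as long as some strong component of the region minus $X'$ still meets more than $2|W|/3$ vertices of $W$, a maximum-flow computation between that component and the offending slice of $W$ exhibits a bounded family of vertex-disjoint paths that every completion of $X'$ to a balanced $W$-separator of size $\le t$ must still hit; branching over a bounded prefix of each such path yields a search tree with at most $t^{\Ocal(t)}=2^{\Ocal(t\log t)}$ leaves, each resolved by one more flow computation, and failure at every leaf certifies that no small balanced $W$-separator exists. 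Designing the branching rule and its progress measure so that the depth stays $\Ocal(t)$ and this non-existence certificate is sound is where the real work lies.

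Finally I would assemble the pieces and bound the running time. Because each successful node splits its region into pieces each meeting only a fixed fraction of $W$, a potential argument on $|V(G)|$ together with the current interface size bounds the total number of recursive calls by $n^{\Ocal(1)}$; each call invokes the branching subroutine once, so the overall running time is $2^{\Ocal(t\log t)}\cdot n^{\Ocal(1)}$, as required. For strictness, I would arrange that every bag of the produced bramble contains more than half of the vertices of the well-linked set $W$ underlying its construction; then any two bags $B$ and $B'$ satisfy $|B\cap W|+|B'\cap W|>|W|$, so $B\cap B'\cap W\neq\emptyset$, whence $B\cap B'\neq\emptyset$, and the bramble is strict while its order stays at least $\lfloor t/2\rfloor$. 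Checking that this last arrangement is compatible with the recursion of~\cite{Johnson2001} completes the plan.
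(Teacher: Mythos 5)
This proposition is quoted from Campos et al.~\cite{Campos2022}; the paper you are working from does \emph{not} prove it. The only original content the paper adds is the one-sentence remark that, although~\cite{Campos2022} does not explicitly claim strictness, ``it is easy to verify that this is the case in their proof.'' So there is no proof in this paper to compare your attempt against; the citation is the proof.

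As for your plan on its own terms: the architecture you describe (top-down arboreal decomposition maintaining a bounded interface $W$, search for a balanced $W$-separator of size at most $t$, fall back to a haven/bramble when none exists) is the right skeleton, and the strictness argument at the end (arrange each bag to capture a majority of $W$, so any two bags intersect inside $W$) is the right idea and essentially what the paper is alluding to with its ``easy to verify'' remark. But the plan does not actually contain a proof. You explicitly defer the crux — the design of the bounded-branching subroutine replacing the $\binom{n}{\le t}$ enumeration, and the soundness of its ``no small balanced separator exists'' certificate — to a sentence that ends with ``where the real work lies.'' That subroutine is not a routine application of \autoref{thm:Menger}: the difficulty is that a max-flow between a heavy strong component and the offending slice of $W$ gives you paths that a balanced separator must meet, but it does not by itself bound the number of distinct candidate vertices on those paths that need to be branched on, nor does it guarantee that each branch makes progress towards \emph{balance} (as opposed to mere connectivity) in a way that terminates after $\Ocal(t)$ levels. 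Pinning this down is the entire technical contribution of~\cite{Campos2022}, so as written your proposal is a correct summary of what a proof must do rather than a proof. To turn it into one, you would need to specify the branching rule, the progress measure, and the termination/soundness argument in full, or else simply cite~\cite{Campos2022} as the paper does and prove only the strictness add-on.
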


Brambles of constant congestion are a key structure used to solve instances of $(k,c)$-\textsc{DDP} in $f(k)$-strong digraphs in the approach by Edwards et al.~\cite{Edwards2017}.
Briefly, given an instance $(G, S, T)$ of this problem and a large bramble $\mathcal{B}$ of congestion two in $G$, they find two sets of paths $\mathscr{P}^S$ from $S$ to the bags of the bramble and $\mathscr{P}^T$ from the bags of the bramble to $T$ with a set of properties that can be exploited to appropriately connect the last vertices of paths in the first set with the first vertices of the paths from the second set, thus constructing a solution to $(G, S, T)$.

If the directed tree-width of $G$ is bounded, then one can solve $(G, S, T)$ by applying \autoref{proposition:XP-algo-DDP-congestion}.
Edwards et al.~\cite{Edwards2017} showed that every digraph of sufficiently large directed tree-width is guaranteed to contain a large bramble of congestion at most two and that, given appropriate structures that are guaranteed to be found in such digraphs, one can construct a bramble with the desired properties in polynomial time.
Formally, they use the following result, originally proved by Kawarabayashi and Kreutzer~\cite{KawarabayashiK15} where an {\XP} algorithm is given, and then improved by Campos et al.~\cite{Campos2022} with an {\FPT} algorithm and a better dependency on $k$.
We refer the reader to~\cite{KawarabayashiK15,Campos2022} for the definition of well-linked sets, and we remark that, for convenience, we present the statement of the following result in a slightly different way than in the original article.
\begin{proposition}[Campos et al~\cite{Campos2022}]\label{prop:well-linked-set-and-path}
Let $g(k) =(t+1)(\lfloor t/2\rfloor +1) - 1$ and $G$ be a digraph with directed tree-width at least $3g(t)-1$.
There is an algorithm running in time $2^{\Ocal(t^2 \log t)}\cdot n^{\Ocal(1)}$ that finds in $G$ a bramble $\mathcal{B}$ of order $g(t)$, a path $P$ that intersects every bag of $\mathcal{B}$, and a well-linked set $X$ of size $t$ such that $X \subseteq V(P)$.
\end{proposition}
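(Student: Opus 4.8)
The plan is to obtain \autoref{prop:well-linked-set-and-path} as a mild reformulation of the well-linked-set machinery underlying the Directed Grid Theorem of Kawarabayashi and Kreutzer~\cite{KawarabayashiK15}, in the constructive \FPT form established by Campos et al.~\cite{Campos2022}; accordingly, the proof is essentially a matter of invoking the right statement and of parameter bookkeeping rather than of a new argument. First I would run the algorithm that, on a digraph $G$ with $\dtw(G)\geq 3g(t)-1$, runs in time $2^{\Ocal(t^2\log t)}\cdot n^{\Ocal(1)}$ and returns either an arboreal decomposition of bounded width---excluded here by the hypothesis on $\dtw(G)$, cf.~\autoref{prop:FPT-dtw-or-bramble}---or a well-linked set $W$ whose size is governed by $g$. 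The factor $3$ in the threshold $3g(t)-1$ is precisely the loss in the directed-tree-width/haven duality of Johnson et al.~\cite{Johnson2001}, carried through this reduction.

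Second, from $W$ I would invoke the construction that forms the technical core of~\cite{KawarabayashiK15}, made constructive and \FPT in~\cite{Campos2022}, and reuse it rather than reprove it: out of $W$ it produces a bramble $\mathcal{B}$ of order $g(t)$ together with a path $P$ meeting every bag of $\mathcal{B}$. At sketch level, $P$ is obtained by splicing together suitable segments of linkages routed between subsets of $W$ (these linkages exist because $W$ is well-linked), and the bags of $\mathcal{B}$ are strongly connected subgraphs supported by the same linkages; the two defining bramble properties and the fact that $P$ hits every bag are then read off from how the linkages interact, and the number of bags is tuned through $g$ so that $\order(\mathcal{B})=g(t)$.

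Finally, the remaining point is the well-linked set $X$ of size $t$ with $X\subseteq V(P)$: I would extract it from the vertices of $W$ that survive on $P$---a subset of a well-linked set is well-linked, since linkages only shrink---possibly after a pigeonhole/monotone-subsequence step along $P$ to ensure the $t$ chosen vertices are arranged as the later applications require. The main, and essentially only, obstacle is the bookkeeping: verifying that the threshold $3g(t)-1$, the bramble order $g(t)=(t+1)(\lfloor t/2\rfloor+1)-1$, and the well-linked-set size $t$ are mutually consistent with the losses incurred at each step---in particular the square-root-type gap between $g(t)\approx t^{2}/2$ and $t$---and that the running time $2^{\Ocal(t^2\log t)}\cdot n^{\Ocal(1)}$, dominated by the well-linked-set extraction, is preserved throughout.
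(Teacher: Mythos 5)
You have correctly recognized that \autoref{prop:well-linked-set-and-path} is not proved in the paper at all: it is a black-box import from Campos et al.~\cite{Campos2022} (which constructivizes the well-linked-set machinery behind the Directed Grid Theorem of Kawarabayashi and Kreutzer~\cite{KawarabayashiK15}), and the paper merely remarks that it is ``stated in a slightly different way than in the original article.'' So there is no in-paper argument against which to compare your sketch; the paper's own ``proof'' is the citation. Your outline of the external proof's architecture---approximate directed tree-width via \autoref{prop:FPT-dtw-or-bramble} to extract a well-linked set $W$, build a bramble $\mathcal{B}$ and a path $P$ hitting all its bags from $W$, then select $X\subseteq V(P)$ of size $t$---is plausible at the level of structure, but it is not self-contained, and one step in particular is under-justified: taking $X$ to be ``the vertices of $W$ that survive on $P$'' silently assumes that $P$ passes through at least $t$ vertices of $W$. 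That is a property of the specific construction in~\cite{KawarabayashiK15,Campos2022} (where $P$ is assembled from segments of linkages whose endpoints lie in $W$, precisely so that a well-linked subset of $W$ sits on $P$); it does not follow from $W$ being well-linked plus $P$ hitting all bags of $\mathcal{B}$. Given that the paper itself only cites this proposition, the appropriate treatment is the same: state it with the parameters $g(t)=(t+1)(\lfloor t/2\rfloor+1)-1$, threshold $3g(t)-1$, and running time $2^{\Ocal(t^2\log t)}\cdot n^{\Ocal(1)}$, and attribute it to~\cite{Campos2022} rather than attempting a partial re-derivation.
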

\begin{proposition}[Edwards et al.~\cite{Edwards2017}]\label{prop:finding-bramble-congestion-two}
There exists a function $f: \mathbb{N} \to \mathbb{N}$ satisfying the following.
Let $G$ be a digraph and $t \geq 1$ be an integer.
Let $P$ be a path in $G$ and $X \subseteq V(P)$ be a well-linked set with $|X| \geq f(t)$.
Then $G$ contains a bramble $\mathcal{B}$ of congestion two and size $t$ and, given $G, P$, and $X$, we can find $\mathcal{B}$ in polynomial time.
\end{proposition}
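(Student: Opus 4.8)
The plan is to exploit the well-linked set $X$ only through the following standard consequence, which is all that is really needed: if $X$ is well-linked in $G$, then for any two subsets $A,B\subseteq X$ with $|A|=|B|$ there is a family of $|A|$ pairwise vertex-disjoint $A\to B$ paths in $G$, and by \autoref{thm:Menger} — applied after the routine trick of splitting each vertex of $X$ into an in-copy and an out-copy so that paths are forced to use the vertices of $X$ as genuine endpoints — such a family can be found in time $\Ocal(n^2)$. I would let $f$ be a polynomial, say roughly $f(t)=\Ocal(t^2)$, and write $X=\{x_1,\dots,x_m\}$ in the order in which these vertices occur along $P$, where $m\ge f(t)$.

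For the construction I would cut $X$ into $t$ consecutive blocks $I_1,\dots,I_t$ along $P$, each of size $\Omega(t)$, and from block $I_j$ keep the sub-path $S_j$ of $P$ between its first and last chosen vertices, so that $S_1,\dots,S_t$ are pairwise disjoint sub-paths of $P$ occurring in this order. The bag $B_j$ would be $S_j$ together with a ``return'' structure $R_j$ that leaves the last vertex of $S_j$, comes back to the first vertex of $S_j$, and is forced along the way to meet each earlier sub-path $S_i$ ($i<j$); concretely one reserves private endpoints inside the blocks $I_1,\dots,I_j$ and asks \autoref{thm:Menger}, in a single application handling all the $R_j$ at once, for a vertex-disjoint realisation, so that $R_1,\dots,R_t$ are pairwise disjoint except where they are meant to touch the $S_i$'s. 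Each $B_j=S_j\cup R_j$ is then strongly connected — from any vertex go forward along $S_j$ to its last vertex, traverse $R_j$ back to its first vertex, and move forward again — and for $i<j$ the structure $R_j$ meets $S_i\subseteq B_i$, so $V(B_i)\cap V(B_j)\ne\emptyset$; hence $\mathcal{B}=\{B_1,\dots,B_t\}$ is a bramble of size $t$.

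It remains to bound the congestion and to make the ``forced meeting'' genuine, and this is where the difficulty lies, and is the step I expect to be the main obstacle. A vertex of $G$ lies in at most one $S_j$ and, off of the $S_i$'s, is internal to at most one $R_j$, so it belongs to at most two bags provided the meeting of $R_j$ with a given $S_i$ is realised at a single vertex and distinct $R_j$'s meet $S_i$ at distinct vertices — which is exactly why each block must have $\Omega(t)$ reserved vertices, forcing $|X|=\Omega(t^2)$. The subtle point is that an arbitrary disjoint family returned by \autoref{thm:Menger} need not make $R_j$ touch $S_i$ at all, since the return could tunnel around $S_i$ through vertices off $P$; the fix is to demand not a single return path but a bundle of $j$ internally disjoint paths whose endpoints are spread over $I_1,\dots,I_j$, so that crossing back over each $I_i$ is unavoidable, and then to prune this bundle to one connected return structure keeping a representative in each $I_i$, all of this done simultaneously over $j$ so the overall collection stays disjoint away from the $S_i$'s. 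Getting this trade-off right — enough separation for congestion two, enough overlap so that every pair of bags meets — is the technical heart of the argument; once it is in place, sorting $X$ along $P$, forming the blocks, and the single max-flow computation behind \autoref{thm:Menger} are all polynomial, so $\mathcal{B}$ is produced in polynomial time given $G$, $P$, and $X$.
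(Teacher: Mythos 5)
This proposition is imported from Edwards et al.~\cite{Edwards2017}; the present paper states it without proof (it only cites the source). So there is no proof here to compare against, and I can only assess your reconstruction on its own terms.

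Your high-level plan — cut $X$ into $t$ consecutive blocks, take the corresponding subpaths $S_1,\ldots,S_t$ of $P$ as the ``spine'' of each bag, and glue onto $S_j$ a back-structure that touches all earlier segments — is a reasonable way to attack the statement, and you have correctly identified the congestion bookkeeping ($\Omega(t)$ reserved vertices per block, hence $|X|=\Omega(t^2)$) as the crux. But two of the steps you flag as ``the technical heart'' do not actually go through as described, and they are not mere polish.

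\textbf{The return structure does not return.} You need $B_j=S_j\cup R_j$ to be strongly connected, and you justify this by saying $R_j$ ``leaves the last vertex of $S_j$ \ldots\ comes back to the first vertex of $S_j$.'' But then you replace $R_j$ by a bundle of internally disjoint paths whose endpoints lie in $I_1,\ldots,I_{j-1}$ — i.e., strictly earlier on $P$. Those paths do \emph{not} come back to $\source(S_j)$. To close the loop you would have to follow $P$ forward from an endpoint in $I_i$ up to $S_j$, but that forward stretch of $P$ runs through $S_{i+1},\ldots,S_{j-1}$. Putting those vertices into $B_j$ means they now lie in $B_j$ as well as in their own $B_{i'}$ \emph{and} in every other $B_{j'}$ that closes its loop over the same stretch — congestion is no longer two. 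Your sketch never says how $B_j$ becomes strong without this, so as written the bags are not brambles.

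\textbf{Well-linkedness does not give you pair-specific paths.} The fix you propose — ``a bundle of $j$ internally disjoint paths whose endpoints are spread over $I_1,\ldots,I_j$, so that crossing back over each $I_i$ is unavoidable'' — assumes exactly the control that \autoref{thm:Menger} and well-linkedness do \emph{not} provide. A single application of Menger to source set $A$ and sink set $B$ with $|A|=|B|$ yields disjoint $A\to B$ paths with an \emph{arbitrary} matching of endpoints. If you reserve $j-1$ sources in each $I_j$ and $t-i$ sinks in each $I_i$, nothing prevents all $j-1$ paths starting in $I_j$ from ending in $I_1$, leaving $I_2,\ldots,I_{j-1}$ untouched by $R_j$ — in which case $B_j$ fails to meet $B_2,\ldots,B_{j-1}$ and you have no bramble. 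Iterating Menger once per $j$ does not help either, because the paths found for different $j$ must be pairwise disjoint off the $S_i$'s and you would be doing dependent applications on a shrinking graph where well-linkedness no longer holds. Making the touching ``genuine'' really is the whole problem, and the mechanism you propose does not do it. A correct argument has to either route the back-paths so that the bramble property follows without controlling endpoint matchings (e.g., by choosing bags whose pairwise intersections are forced by the geometry of $P$ rather than by where the back-paths end), or establish a stronger linkage primitive than plain well-linkedness. As it stands, your sketch is a plausible outline with an honest but real hole at its centre, not a proof.
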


Pipelining \cref{prop:FPT-dtw-or-bramble,prop:well-linked-set-and-path,prop:finding-bramble-congestion-two} we obtain the following. 
\begin{corollary}\label{cor:bounded-dtw-or-bramble-congestion-two}
There is a function $f: \mathbb{N} \to \mathbb{N}$ and an {\FPT} algorithm with parameter $t$ that, given a digraph $G$ and an integer $t \geq 1$, either correctly decides that the directed tree-width of $G$ is at most $f(t)$ or finds a bramble $\mathcal{B}$ of congestion two and size $t$ in $G$.
\end{corollary}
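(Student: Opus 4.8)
The plan is simply to chain \autoref{prop:FPT-dtw-or-bramble}, \autoref{prop:well-linked-set-and-path} and \autoref{prop:finding-bramble-congestion-two}, the only real work being to fix the intermediate parameters so that they are compatible. Write $f_E$ for the function provided by \autoref{prop:finding-bramble-congestion-two} and set $s:=f_E(t)$. With $g(s)=(s+1)(\lfloor s/2\rfloor+1)-1$ as in \autoref{prop:well-linked-set-and-path}, let $d:=3g(s)-1$; by that proposition, from any digraph of directed tree-width at least $d$ we can extract, in time $2^{\Ocal(s^{2}\log s)}\cdot n^{\Ocal(1)}$, a path $P$ and a well-linked set $X\subseteq V(P)$ with $|X|=s=f_E(t)$, and then \autoref{prop:finding-bramble-congestion-two} turns $(G,P,X)$ into a bramble of congestion two and size $t$ in polynomial time. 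So it is enough to have an \FPT procedure that either certifies $\dtw(G)\le f(t)$ for some computable $f$, or certifies $\dtw(G)\ge d$.

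For the second alternative I would call \autoref{prop:FPT-dtw-or-bramble}. Recall, from the discussion in \autoref{sec:preliminaries} together with \cite{Johnson2001,Matthias2014}, that the presence of a strict bramble of order $r$ in $G$ certifies a lower bound $\dtw(G)\ge\psi(r)$ for a fixed nondecreasing unbounded function $\psi$ (this is where the factor-three loss between bramble/haven order and directed tree-width enters). Fix the least integer $t'=t'(t)$ with $\psi(\lfloor t'/2\rfloor)\ge d$ and run \autoref{prop:FPT-dtw-or-bramble} on $(G,t')$. If it outputs an arboreal decomposition of width at most $3t'-2$, report $\dtw(G)\le f(t):=3t'-2$, which is correct because an arboreal decomposition of width $w$ witnesses $\dtw(G)\le w$. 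If instead it outputs a strict bramble of order $\lfloor t'/2\rfloor$, then $\dtw(G)\ge\psi(\lfloor t'/2\rfloor)\ge d$, so we may apply \autoref{prop:well-linked-set-and-path} with parameter $s$ followed by \autoref{prop:finding-bramble-congestion-two} exactly as above and output the resulting congestion-two bramble of size $t$. The algorithm is correct: when $\dtw(G)>f(t)$, \autoref{prop:FPT-dtw-or-bramble} cannot return a decomposition of width $\le 3t'-2=f(t)$, hence it returns the bramble and we land in the second branch (note that the ``either/or'' of the statement is not exclusive, so producing the bramble is always an acceptable output).

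For the running time, \autoref{prop:FPT-dtw-or-bramble} costs $2^{\Ocal(t'\log t')}\cdot n^{\Ocal(1)}$, \autoref{prop:well-linked-set-and-path} costs $2^{\Ocal(s^{2}\log s)}\cdot n^{\Ocal(1)}$, and \autoref{prop:finding-bramble-congestion-two} costs $n^{\Ocal(1)}$; since $t'$ and $s$ depend only on $t$, the total is $h(t)\cdot n^{\Ocal(1)}$ for a computable $h$, i.e.\ an \FPT algorithm parameterized by $t$, with $f(t)=3t'(t)-2$ computable. I do not expect a genuine obstacle, as this is essentially a composition of black boxes; the one point requiring care is making the strict bramble returned by \autoref{prop:FPT-dtw-or-bramble} large enough to meet the directed tree-width threshold $d$ that \autoref{prop:well-linked-set-and-path} needs, which is precisely why $t'$ is calibrated through the bramble-order-to-directed-tree-width inequality $\psi$ rather than read off directly from $t$.
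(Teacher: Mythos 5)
Your proof is correct and takes essentially the same route as the paper, which simply says that the corollary follows by ``pipelining'' \autoref{prop:FPT-dtw-or-bramble}, \autoref{prop:well-linked-set-and-path}, and \autoref{prop:finding-bramble-congestion-two}; you have merely filled in the parameter calibration (via the bramble-order-to-directed-tree-width function $\psi$ implicit in the haven/bramble discussion of \autoref{sec:preliminaries}) that the paper leaves tacit.
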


\section{New Menger-like statements for paths in digraphs}
\label{sec:newpaths}

In this section we present the definition and the min-max formulas associated with each pair \Dpaths/\Dcut{s}, \Tpaths/\Tcut{s}, and \Rpaths/\Rcut{s}.
For all three pairs, we prove in this section that there is a sharp duality between the maximum number of paths and the minimum order of the associated cut.
Since all three types of paths share some properties (in fact, the major distinction between them is in how they reach their destinations), it is convenient to adopt the following notations.

\begin{definition}[Digraph-source sequences and respecting paths]\label{def:graph-source-sequence}
For an integer $\ell \geq 1$, a \emph{digraph-source sequence} of \emph{size} $\ell$ is a pair $(\mathcal{F}, \mathcal{S})$ such that $\mathcal{F} = (G_1, \ldots, G_\ell)$ is a sequence of digraphs and $\mathcal{S} = (S_1, \ldots, S_\ell)$ is a sequence of subsets of vertices with $S_i \subseteq V(G_i)$ for $i\in [\ell]$. We say that a set of paths $\mathscr{P}$ \emph{respects} $(\mathcal{F, S})$ or, alternatively, is \emph{$(\mathcal{F, S})$-respecting} if there is a partition $\mathcal{P}_1, \ldots, \mathcal{P}_\ell$ of $\mathscr{P}$ such that
\begin{enumerate}[(a)]
\item for $i \in [\ell]$, $\mathcal{P}_i$ is a set of disjoints paths in $G_i$, and
\item for $i \in [\ell]$, $\source(\mathcal{P}_i)\in S_i$.
\end{enumerate}
In this case, we say that $\mathcal{P}_1, \ldots, \mathcal{P}_\ell$ is a \emph{defining partition} of $\mathscr{P}$.
\end{definition}
Thus in any set of $(\mathcal{F, S})$-respecting paths, any two paths can intersect only if they belong to distinct parts of the defining partition.

To provide some intuition within the context of the $(k,c)$-\textsc{DDP} problem, in the next three definitions one can think of the sequence $(S_1, \ldots, S_\ell)$ as being formed by many copies of $S$ followed by many copies of $T$.
Notice that any set of $(\mathcal{F, S})$-respecting paths includes paths \emph{leaving} $T$, which seems counter-intuitive when considering the goal of solving instances of $(k,c)$-\textsc{DDP}.
We remark this can be easily addressed by associating, with each copy of $T$ in the sequence $\mathcal{S}$, the digraph $G^{\sf rev}$ formed by reversing the orientation of every edge of $G$ (we use this tool in \autoref{section:DDP-algorithm} and a formal definition of $G^{\sf rev}$ is given in \autoref{def:reverse-digraph}).

\medskip
\noindent\textbf{\Dpaths and \Dcut{s}.}
Within the context of this paper, \Dpaths and \Dcut{s} are used as a tool to prove our results regarding \Tpaths and \Tcut{s}. 
We believe that \Dpaths/\Dcut{s} are interesting on their own and under a pedagogical point of view, since they are easy to visualize and comprehend.
Thus the definition and proofs regarding those objects are included and we expect that they find further applications in the context of problems like the $(k,c)$-\textsc{DDP} problem.
In this scenario, one should think of the set $B$ in the following definition as the set of vertices of a highly connected structure that is intended to be used to appropriately connect the last vertices of paths from $S$ to the first vertices of paths from $T$.

\begin{definition}[\Dpaths and \Dcut{s}]\label{def:D-paths-and-D-cuts}
For an integer $\ell \geq 1$, let $(\mathcal{F, S})$ be a digraph-source sequence with $\mathcal{F} = (G_1, \ldots, G_\ell)$, $\mathcal{S} = (S_1, \ldots, S_\ell)$, and $B \subseteq \bigcup_{i = 1}^{\ell}V(G_i)$.
With respect to $B$, we say that a set of $(\mathcal{F, S})$-respecting paths $\mathscr{P}$ with defining partition $\mathcal{P}_1, \ldots, \mathcal{P}_\ell$ is a set of \emph{\Dpaths} if 
\begin{bracketenumerate}
\item for all distinct $P,P' \in \mathscr{P}$ it holds that $\sink(P) \neq \sink(P')$, and
\item[\lipItem{(2a)}] $\sink(\mathscr{P}) \subseteq B$.
\end{bracketenumerate}
A \emph{\Dcut} is a sequence $\mathcal{X} = (X_0, \ldots, X_\ell)$ with $X_0 \subseteq B$ such that, for $i \in [\ell]$, the set $X_i \subseteq V(G_i)$ is an $(S_i, B\setminus X_0)$-separator in $G_i$.
The \emph{order} of a \Dcut $X$ is $\order(\mathcal{X}) = |X_0| + \sum_{i=1}^{\ell}|X_i|$.
\end{definition}

Thus, in the definition of \Dpaths we ask each collection of paths associated with each $\mathcal{P}_i$ to be pairwise disjoint in $G_i$, and the paths from distinct parts $\mathcal{P}_i, \mathcal{P}_j$ may share vertices in $\bigcup_{i=1}^{\ell}V(G_i)$ other than the last vertices of the paths.
The ``{\sf D}'' in the name stands for ``disjoint''.
For the min-max formula, we prove the following.
\begin{theorem}\label{theo:min-max-statement-var-1}
Let $(\mathcal{F, S})$ be digraph-source sequence of size $\ell$ with $\mathcal{F} = (G_1, \ldots, G_\ell)$, and let $B \subseteq  \bigcup_{i=1}^{\ell}V(G_i)$.
With respect to $\mathcal{F, S}$, and $B$, the maximum number of \Dpaths is equal to the minimum order of a \Dcut.
Additionally, a \Dcut of minimum order and a maximum collection of \Dpaths can be found in time $\Ocal((\ell\cdot n^* + |\mathcal{B}|)^2)$ where $n^* = \max_{i \in [\ell]}(|V(G_i)|)$.
\end{theorem}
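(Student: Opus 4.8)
The plan is to derive the min--max equality from a single application of Menger's Theorem (\autoref{thm:Menger}) on an auxiliary digraph $H$, and to translate the path family and the separator it produces back into \Dpaths and a \Dcut. I would build $H$ as follows: for each $i\in[\ell]$ take a fresh vertex-disjoint copy $G_i'$ of $G_i$; for each $b\in B$ add one new vertex $b^*$; and for each $i$ and each $b\in B\cap V(G_i)$ add the edge from the copy of $b$ in $G_i'$ to $b^*$. Let $A$ be the union over $i\in[\ell]$ of the copy of $S_i$ inside $G_i'$, and let $B^*=\{b^*:b\in B\}$. The disjoint copies are meant to model the fact that paths from different parts of a defining partition may overlap freely, while the shared ``output layer'' $B^*$ forces the last vertices of all paths to be pairwise distinct and to lie in $B$: since the vertices $b^*$ have no outgoing edges and there are no edges between distinct copies, every $A\to B^*$ path of $H$ stays inside a single $G_i'$ until, along its very last edge, it enters some $b^*$.

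The first step is the structural claim that the maximum number of \Dpaths equals the maximum number $\nu(H)$ of pairwise vertex-disjoint $A\to B^*$ paths in $H$. Given such a path family, deleting the final $b^*$-vertices and grouping the remainders according to the copy $G_i'$ they live in produces a defining partition $\mathcal{P}_1,\dots,\mathcal{P}_\ell$: the resulting paths start in $S_i$, are disjoint within each $\mathcal{P}_i$ (vertex-disjointness in $H$ is inherited inside a single copy), and have pairwise distinct sinks in $B$ (distinct $b^*$'s). Conversely, a set of \Dpaths lifts to $H$ by routing each $P\in\mathcal{P}_i$ through $G_i'$ and then along the edge to the vertex $\sink(P)^*$; within-part disjointness, disjointness of the copies, and the ``distinct sinks'' property together make these lifted paths pairwise vertex-disjoint.

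The second step is weak duality, proved directly: for any \Dpaths $\mathscr{P}$ (with defining partition $\mathcal{P}_1,\dots,\mathcal{P}_\ell$) and any \Dcut $\mathcal{X}=(X_0,\dots,X_\ell)$ one has $|\mathscr{P}|\le\order(\mathcal{X})$, by charging each $P\in\mathcal{P}_i$ either to $\sink(P)\in X_0$ (when $\sink(P)\in X_0$) or, otherwise, to a vertex of $V(P)\cap X_i$, which is nonempty because $P$ is then an $S_i\to(B\setminus X_0)$ path in $G_i$ and $X_i$ separates these. The ``distinct sinks'' property makes the charging to $X_0$ injective, within-part disjointness makes the charging to each $X_i$ injective, and the slots $X_0,X_1,\dots,X_\ell$ are counted separately in $\order(\mathcal{X})$, so $|\mathscr{P}|\le|X_0|+\sum_{i=1}^{\ell}|X_i|=\order(\mathcal{X})$. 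The third step reads a \Dcut off a minimum $(A,B^*)$-separator $Y$ of $H$: put $X_0=\{b\in B:b^*\in Y\}$ and let $X_i\subseteq V(G_i)$ be the set of vertices whose copy in $G_i'$ lies in $Y$; then $\order(\mathcal{X})=|Y|$, and each $X_i$ is an $(S_i,B\setminus X_0)$-separator in $G_i$, since a path in $G_i\setminus X_i$ from $S_i$ to $B\setminus X_0$ would lift (exactly as in the first step) to an $A\to B^*$ path of $H$ avoiding $Y$. Combining the three steps with \autoref{thm:Menger} yields
\[
\bigl(\text{max.\ number of \Dpaths}\bigr)=\nu(H)=\mu(H)\ge\bigl(\text{min.\ order of a \Dcut}\bigr)\ge\bigl(\text{max.\ number of \Dpaths}\bigr),
\]
where $\mu(H)$ is the minimum size of an $(A,B^*)$-separator of $H$, so all four quantities coincide. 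For the running time, $H$ has at most $\ell\cdot n^*+|B|$ vertices, so \autoref{thm:Menger} outputs an optimal disjoint path family and an optimal separator in time $\Ocal((\ell\cdot n^*+|B|)^2)$, and the translations above cost only linear time in the size of $H$, which is within this bound.

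I expect the main obstacle to be pinning down the auxiliary digraph so that vertex-disjointness in $H$ captures exactly the asymmetric disjointness requirement of \Dpaths --- disjoint inside each part, arbitrary overlaps across parts, and globally distinct sinks confined to $B$ --- which is precisely the role of splitting each $G_i$ into a private copy while funneling every relevant terminal through the single shared layer $B^*$; together with the routine but slightly fiddly bookkeeping needed to certify that the sequence $(X_0,X_1,\dots,X_\ell)$ extracted from a Menger separator is a genuine \Dcut (in particular that $X_i\subseteq V(G_i)$ and that it separates $S_i$ from $B\setminus X_0$ in $G_i$).
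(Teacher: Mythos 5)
Your construction of the auxiliary digraph $H$ is the same as the paper's $G^{\sf D}$ (the paper reuses the original vertices of $B$ rather than fresh copies $b^*$, but since the added $B$-layer is an independent set with no outgoing edges this is cosmetic), and the application of Menger's Theorem is identical. Where you differ is in how you close the loop: the paper introduces the intermediate quantity $f^{\sf D}(\mathcal{F},\mathcal{S},B)$ and proves two separate lemmas, one identifying it with the minimum $(S',B)$-separator size in $G^{\sf D}$ and one identifying it with the minimum order of a \Dcut, whereas you dispense with the formula entirely by proving weak duality directly (charging each $P\in\mathcal{P}_i$ injectively either to its distinct sink when that sink lies in $X_0$, or else to a vertex of $V(P)\cap X_i$, which exists since $P$ is then an $S_i\to(B\setminus X_0)$ path and $X_i$ separates these) and by reading a \Dcut $(X_0,X_1,\dots,X_\ell)$ of order exactly $|Y|$ off a minimum Menger separator $Y$, using that $Y$ partitions between the $B^*$-layer and the pairwise-disjoint copies. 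Both the charging argument and the separator translation are correct. Net effect: same key construction, but a tighter, self-contained duality argument that saves the detour through $f^{\sf D}$ and one of the paper's two lemmas.
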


\medskip
\noindent\textbf{\Tpaths and \Tcut{s}.}
For the sake of intuition, in the next definition one should think of $\mathcal{B}$ as a bramble.
Informally, and given a digraph $G$, we use \Tpaths and \Tcut{s} to find a large collection of paths from a given ordered $S \subseteq V(G)$ to the bags of a subbramble $\mathcal{B}^S \subseteq \mathcal{B}$, and from the bags of another subbramble $\mathcal{B}^T \subseteq \mathcal{B}$ to $T$ (we can achieve this orientation for these paths by reversing the orientation of the edges of $G$), while ensuring that every vertex outside of $\mathcal{B}$ appears in at most two of those paths, and that all elements of $\mathcal{B}^S \cup \mathcal{B}^T$ are pairwise distinct.
The first property can be achieved by simply applying Menger's Theorem (cf. \autoref{thm:Menger}) twice.
However, by doing this, we can end with a set of paths all ending on the same bag of $\mathcal{B}$.
This scenario is far from ideal, since at some point the goal is to use the strong connectivity of $G[B \cup B']$ for every $B,B' \in \mathcal{B}$ to appropriately connect the ending vertices of the paths from $S$ to the starting vertices of the paths to $T$, while maintaining the property that every vertex appears in at most two (or $c$, in the general case) of those paths.
If a unique bag $B$ contains all starting and ending vertices of the paths, then connecting those vertices while maintaining such properties may be as hard as finding a solution to an instance of $(k,c)$-\textsc{DDP} in the strong digraph $G[B]$, or downright impossible to do.

Therefore, in the proofs applying similar techniques, as seen in the works by Edwards et al.~\cite{Edwards2017} and by Giannopoulou et al.~\cite{Giannopoulou2020}, there is considerable effort into finding paths with ``good properties'' that can be used to connected the paths inside of $\mathcal{B}$ (or inside of a cylindrical grid in the case of~\cite{Giannopoulou2020}), and these properties always include, as far as we know, that the paths end or start in distinct bags of $\mathcal{B}$ (or distinct sections of the cylindrical grid).
In particular,~\cite[Lemma 16]{Edwards2017} includes a set of seven properties over a set of paths that we can substitute by \Tpaths to achieve better results in a simpler manner.

We can prove the same results using \Rpaths and \Rcut{s}, which are simpler than \Tpaths and \Tcut{s}.
We include the proofs for the two latter objects for potential applications in which the extra properties of \Tpaths and \Tcut{s} may become handy.

\begin{definition}[\Tpaths and \Tcut{s}]\label{def:t-paths-and-t-cuts}
For an integer $\ell \geq 1$, let $(\mathcal{F, S})$ be a digraph-source sequence with $\mathcal{F} = (G_1, \ldots, G_\ell)$ and $\mathcal{S} = (S_1, \ldots, S_\ell)$, and $\mathcal{B}$ be a family of subsets of $\bigcup_{i = 1}^{\ell}V(G_i)$.
With respect to $\mathcal{B}$, we say that a set of $(\mathcal{F, S})$-respecting paths $\mathscr{P}$ with defining partition $\mathcal{P}_1, \ldots, \mathcal{P}_\ell$ is a set of \emph{\Tpaths} if
\begin{bracketenumerate}
\item for all distinct $P,P' \in \mathscr{P}$ it holds that $\sink(P) \neq \sink(P')$, and
\item[\lipItem{(2b)}] the set $\sink(\mathscr{P})$ is a partial transversal of $\mathcal{B}$.
\end{bracketenumerate}
A \emph{\Tcut} is a pair $(\mathcal{B'},\mathcal{X})$ with $\mathcal{B}' \subseteq \mathcal{B}$ and such that $\mathcal{X}$ is a \Dcut with respect to $\mathcal{F, S}$, and $\cupall\mathcal{B}'$.
The \emph{order} of a \Tcut $(\mathcal{B'}, X)$ is $\order(\mathcal{B'}, X) = |\mathcal{B} \setminus \mathcal{B}'| + \order(\mathcal{X})$.
\end{definition}
For convenience, we keep only one set of parenthesis, writing $\order(\mathcal{B'}, \mathcal{X})$ instead of $\order((\mathcal{B}', \mathcal{X}))$.


Notice that conditions \lipItem{(1)} in the definition of \Tpaths is the same as in the definition of \Dpaths.
Thus the difference between \Dpaths and \Tpaths is that in the former we ask the paths to end in distinct vertices of $B$, while in the latter we ask the endpoints of the paths to form a partial transversal of $\mathcal{B}$.
This implies that those endpoints are distinct, and that each of them is associated with a unique element of $\mathcal{B}$.
The ``{\sf T}'' in the name stands for ``transversal''.
See \autoref{fig:transversal-example} for an example of a transversal of a collection of sets.
\begin{figure}[h]
\centering
\begin{tikzpicture}
\node[draw, ellipse, inner ysep = 1.25cm, inner xsep=.5cm, label=90:{$B_4$}] (a) at (0,0) {};
\foreach \i/\j in {-1.2/1,-0.3/2,.55/3}{
  \node[draw, ellipse, inner ysep = .25cm, inner xsep=1cm, label=0:{$B_\j$}] (e\i) at (0.3,\i) {};
  \node[blackvertex, label = 180:{$v_\j$}] (v\j) at (0.1,\i) {};
}
\node[blackvertex, label=180:{$v_4$}] at ($(v3) + (0,.7)$) {};

\end{tikzpicture}
\caption{Example of a transversal of the collection $\{B_1, B_2, B_3, B_4\}$. For $i \in [4]$ the vertex $v_i$ is associated with the set $B_i$.}
\label{fig:transversal-example}
\end{figure}
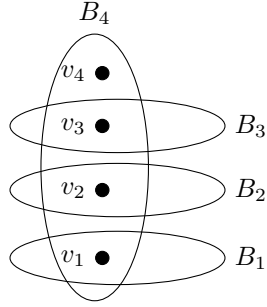

For the \Tpaths/\Tcut{s} duality, we prove the following.
\begin{theorem}\label{theo:min-max-statement-var-2}
Let $(\mathcal{F, S})$ be a digraph-source sequence of size $\ell$ with $\mathcal{F} = (G_1, \ldots, G_\ell)$, and let $\mathcal{B}$ be a collection of subsets of $\bigcup_{i=1}^{\ell}V(G_i)$.
With respect to $\mathcal{F, S}$, and $\mathcal{B}$, the maximum number of \Tpaths is equal to the minimum order of a \Tcut.
Additionally, a \Tcut of minimum order and a maximum collection of \Tpaths can be found in time $\Ocal((\ell\cdot n^* + |\mathcal{B}|)^2)$, where $n^* = \max_{i \in [\ell]}(|V(G_i)|)$.
\end{theorem}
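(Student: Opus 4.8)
The plan is to reduce the \Tpaths/\Tcut{s} duality to the \Dpaths/\Dcut{s} duality already established in \autoref{theo:min-max-statement-var-1}, combined with the transversal matroid machinery from \autoref{prop:transversalmatroid} and \autoref{prop:transversalrank}. The key observation is that a set of \Tpaths is exactly a set of \Dpaths (with respect to some target set $B$) whose endpoint set $\sink(\mathscr{P})$ is additionally a partial transversal of $\mathcal{B}$; dually, a \Tcut $(\mathcal{B}',\mathcal{X})$ pays $|\mathcal{B}\setminus\mathcal{B}'|$ to ``discard'' some bags and then pays for a \Dcut with respect to $\cupall\mathcal{B}'$. So the natural route is an exchange argument over which subfamily $\mathcal{B}'\subseteq\mathcal{B}$ we commit to. First I would prove the easy direction (max $\le$ min): given any set of \Tpaths $\mathscr{P}$ and any \Tcut $(\mathcal{B}',\mathcal{X})$, every path of $\mathscr{P}$ whose endpoint lies in $\cupall\mathcal{B}'$ is hit by the \Dcut $\mathcal{X}$ (by the same counting as in \autoref{theo:min-max-statement-var-1}, since $\mathcal{X}$ is an $(S_i,(\cupall\mathcal{B}')\setminus X_0)$-separator in each $G_i$), while the endpoints lying outside $\cupall\mathcal{B}'$ come from paths that, via the partial-transversal structure, are ``charged'' to distinct bags of $\mathcal{B}\setminus\mathcal{B}'$ — here one uses that $\sink(\mathscr{P})$ is a partial transversal, so the number of such paths is at most $|\mathcal{B}\setminus\mathcal{B}'|$. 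Summing gives $|\mathscr{P}| \le |\mathcal{B}\setminus\mathcal{B}'| + \order(\mathcal{X}) = \order(\mathcal{B}',\mathcal{X})$.

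For the hard direction and the algorithm, the plan is to build a single auxiliary digraph that simultaneously encodes the disjointness structure of the \Dpaths and the transversal constraint, and then invoke Menger's Theorem (\autoref{thm:Menger}) once. Concretely, I would take the same auxiliary construction used in the proof of \autoref{theo:min-max-statement-var-1} — a disjoint union of the $G_i$'s with all sources merged appropriately and each vertex split into an in-copy and an out-copy to model vertex-disjointness within each part while allowing sharing across parts — and then append a transversal gadget at the ``$B$-end'': add one new vertex $b_j$ for each bag $B_j\in\mathcal{B}$, with an arc from (the out-copy of) each $v\in B_j$ into $b_j$, and a common super-sink $t^\star$ reachable from every $b_j$. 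A collection of internally-disjoint paths from the super-source to $t^\star$ in this auxiliary digraph then corresponds precisely to a set of \Tpaths: the last vertex of each path picks out a vertex $v\in B_j$, the $b_j$ vertices being distinct forces the endpoints to form a partial transversal, and condition \lipItem{(1)} ($\sink(P)\ne\sink(P')$) is enforced by splitting each $B$-vertex into a through-vertex of capacity one on the $v\to b_j$ layer. Dually, a minimum vertex cut in this auxiliary digraph decomposes into a part living inside the $B_j$-gadget layer (whose vertices of the first type correspond to $X_0\subseteq\cupall\mathcal{B}'$, and whose cut $b_j$-vertices correspond to discarded bags contributing to $|\mathcal{B}\setminus\mathcal{B}'|$) and a part living inside the split $G_i$'s (giving the separators $X_i$); translating the min-cut back yields a \Tcut of order equal to the max number of \Tpaths. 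The running time $\Ocal((\ell\cdot n^*+|\mathcal{B}|)^2)$ follows since the auxiliary digraph has $\Ocal(\ell\cdot n^* + |\mathcal{B}|)$ vertices and we apply \autoref{thm:Menger} once.

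The main obstacle I anticipate is making the correspondence between a minimum vertex cut in the auxiliary digraph and a \emph{\Tcut} of the form prescribed in \autoref{def:t-paths-and-t-cuts} genuinely tight — in particular, arguing that a minimum cut can be assumed to cut each $b_j$ either ``entirely'' (discard the bag) or ``not at all'', rather than partially interfering with the $v\to b_j$ arcs in a way that does not cleanly split into $|\mathcal{B}\setminus\mathcal{B}'|$ plus a \Dcut with respect to $\cupall\mathcal{B}'$. This requires a careful uncrossing/normalization argument on the optimal cut: whenever the cut separates some of $B_j$'s vertices from $t^\star$ by blocking arcs rather than by taking $b_j$, one shows it is no worse to either move those blocks into $X_0$ (shrinking the effective target $\cupall\mathcal{B}'$ is not allowed, so instead one keeps the bag but absorbs the vertices into $X_0$) or, if the whole bag is blocked, to take $b_j$ and pay $1$ toward $|\mathcal{B}\setminus\mathcal{B}'|$. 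A secondary subtlety, shared with \autoref{theo:min-max-statement-var-1}, is bookkeeping the vertex-splitting so that a vertex used as an internal vertex of paths in several different parts $\mathcal{P}_i$ is not over-counted, while a vertex used as the endpoint of a path is correctly forced to be the endpoint of at most one path. Once this normalization lemma is in place, the min-max equality and the polynomial-time extraction of both optimal objects fall out of \autoref{thm:Menger} essentially immediately, so I would front-load all the work into stating and proving that normalization carefully.
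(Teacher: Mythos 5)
Your plan is essentially the same as the paper's Menger-based proof of \autoref{theo:min-max-statement-var-2}: build one auxiliary digraph $G^{\sf T}$ out of $G(\mathcal{F})$, add an intermediate ``through-vertex'' layer $V^*=\cupall\mathcal{B}$ (your ``through-vertex of capacity one on the $v\to b_j$ layer'') to enforce that endpoints in $\sink(\mathscr{P})$ are distinct, add one vertex $b_j$ per bag $B_j$ to enforce the transversal condition, and apply Menger's Theorem between $S'$ and $\{b_1,\dots,b_r\}$. The easy direction you sketch by a direct counting argument also goes through.

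Two points worth fixing or relaxing in your write-up. First, the in-copy/out-copy splitting inside each $G_i$ and the super-sink $t^\star$ are both unnecessary: since $G(\mathcal{F})$ consists of genuinely disjoint copies $V_1,\dots,V_\ell$, vertex-disjointness of the paths given by Menger already enforces that paths in the same part $\mathcal{P}_i$ are disjoint while paths in different parts may share a vertex of $\bigcup_i V(G_i)$; and Menger's Theorem in the form of \autoref{thm:Menger} is stated for sets of sources and targets, so no super-sink is needed. Second, the normalization/uncrossing concern you front-load does not arise once the construction uses the intermediate layer $V^*$: a minimum $(S',B)$-\emph{vertex} cut $X$ in $G^{\sf T}$ decomposes for free into $X\cap B$ (the discarded bags giving $|\mathcal{B}\setminus\mathcal{B}'|$), $X\cap V^*$ (which plays the role of $X_0\subseteq\cupall\mathcal{B}'$ in the associated \Dcut), and $X\cap V_i$ for $i\in[\ell]$ (the separators $X_i$). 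There is no notion of ``cutting $b_j$ partially'' because $b_j$ is a single vertex and its in-neighborhood is exactly $B_j\subseteq V^*$, so a cut avoiding $b_j$ that blocks all of $B_j$ in $V^*$ is precisely accounted for by $|X\cap V^*|$, i.e., by $|X_0|$. The paper's \autoref{lemma:min-separator-variation-2} obtains $|X|=f^{\sf T}(\mathcal{F,S,B})$ by this decomposition directly, invoking \autoref{lemma:min-separator-variation-1} on $X\setminus B$; no exchange argument on the optimal cut is needed. So the plan is correct, but much of the work you anticipate is absorbed automatically by choosing the $V^*$ layer as the set $\cupall\mathcal{B}$ itself rather than as arcs.
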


\medskip
\noindent\textbf{\Rpaths and \Rcut{s}.}
The intuition for \Rpaths is similar to the one for \Tpaths, as is the motivation to use these objects in the context of $(k,c)$-\textsc{DDP} and similar problems.
The difference between them is that, if $\mathscr{P}$ is a set of \Tpaths, then all vertices of the form $\sink(P)$, where $P \in \mathscr{P}$, are distinct.
In \Rpaths this only holds when considering paths inside of the same part of its defining partition.
More precisely, given a partition $\mathcal{P}$ of a set of \Rpaths as defined below, $\sink(P)$ and $\sink(P')$ are guaranteed to be disjoint only when $P$ and $P'$ are in distinct parts of $\mathcal{P}$.
In \autoref{section:DDP-algorithm} we show that this relaxation poses no problem for the application of \Rpaths/\Rcut{s} we show in this article.

\begin{definition}[\Rpaths and \Rcut{s}]\label{def:r-paths-and-r-cuts}
For $\ell \geq 1$, let $(\mathcal{F, S})$ be a digraph-source sequence with $\mathcal{F} = (G_1, \ldots, G_\ell)$ and $\mathcal{S} = (S_1, \ldots, S_\ell)$, and $\mathcal{B}$ be a family of subsets of $\bigcup_{i = 1}^{\ell}V(G_i)$.
With respect to $\mathcal{B}$, we say that a set of $(\mathcal{F, S})$-respecting paths $\mathscr{P}$ with defining partition $\mathcal{P}_1, \ldots, \mathcal{P}_\ell$ is a set of \emph{\Rpaths} if
\begin{bracketenumerate}
\item[\lipItem{(1c)}] for some family $\mathcal{B}^* \subseteq \mathcal{B}$ there is a bijective mapping $h: \mathscr{P} \to \mathcal{B}^*$ such that $h(P) = B$ implies $\sink(P) \in B$.
\end{bracketenumerate}
An \emph{\Rcut} is a pair $(\mathcal{B',X})$ where $\mathcal{B}' \subseteq \mathcal{B}$ and $\mathcal{X}$ is a sequence $(X_1, \ldots, X_\ell)$ such that each $X_i \in \mathcal{X}$ is an $(S_i, \cupall\mathcal{B}')$-separator in $G_i$.
The \emph{order} of an \Rcut $(\mathcal{B', X})$ is $\order(\mathcal{B', X}) = |\mathcal{B} \setminus \mathcal{B}'| + \sum_{i=1}^{\ell}|X_i|$.
\end{definition}
We remark that the only difference between \Rpaths and \Tpaths is that, in the latter, condition \lipItem{(1c)} ensures that all vertices forming the partial transversal of $\mathcal{B}$ are distinct.
For \Rpaths this is not the case, and the construction of the auxiliary digraph used in the proof of the \Rpaths/\Rcut{s} duality reflects this distinction.
The ``{\sf R}'' in the name stands for ``representatives''.
For the duality, we prove the following.
\begin{theorem}\label{theo:min-max-statement-var-3}
Given a digraph-source sequence $(\mathcal{F, S})$ of size $\ell$ and a set $B \subseteq  \bigcup_{i=1}^{\ell}V(G_i)$ then, with respect to $\mathcal{F, S}$, and $B$, the maximum number of \Rpaths is equal to the minimum order of an \Rcut.
Additionally, an \Rcut of minimum order and a maximum collection of \Rpaths can be found in $\Ocal((k\cdot n^* + |\mathcal{B}|)^2)$ where $n^* = \max_{i \in [\ell]}(|V(G_i)|)$.
\end{theorem}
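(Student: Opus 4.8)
The plan is to follow the same recipe announced in the introduction: build an auxiliary digraph $H$ so that disjoint $A\to B$ paths in $H$ correspond to collections of \Rpaths, and $(A,B)$-separators in $H$ correspond to \Rcut{s} of the same order, and then invoke Menger's Theorem (\autoref{thm:Menger}). The crucial point, flagged in the discussion preceding the statement, is that an \Rpath collection only requires its sinks to be distinct \emph{within} a part $\mathcal{P}_i$ of the defining partition, together with a bijection $h$ to a subfamily $\mathcal{B}^*\subseteq\mathcal{B}$ with $\sink(P)\in h(P)$. So the gadget must decouple ``which vertex a path ends at'' from ``which bag it is charged to'', and it must allow two paths from different parts to share a sink vertex while still being charged to two different bags.

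Concretely, I would construct $H$ as follows. Take $\ell$ disjoint copies $G_1,\dots,G_\ell$ of the digraphs in $\mathcal{F}$ (they are already disjoint as abstract digraphs, but I keep the bookkeeping explicit). Add a single super-source $a$ with an edge to every vertex of $S_i$ in each copy $G_i$; splitting $a$ into $a^{-},a^{+}$ in the standard way (an in-vertex and an out-vertex joined by a single edge) is \emph{not} needed here since $a$ has no in-edges, but I will split every \emph{other} vertex $v$ of every $G_i$ into $v^{-}\to v^{+}$ so that Menger in the vertex version becomes Menger in the edge-capacity-one version and vertex-disjointness inside $G_i$ is enforced. Now for the sink side: for each bag $B\in\mathcal{B}$ create one new vertex $z_B$, and for every $i\in[\ell]$ and every vertex $v\in B\cap V(G_i)$ add an edge $v^{+}\to z_B$. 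Finally add a super-sink $b$ with an edge $z_B\to b$ for every $B\in\mathcal{B}$. The idea is that a path through $z_B$ "uses up" bag $B$ (capacity one at $z_B$), realizing the bijectivity of $h$, while a vertex $v$ that lies in two bags $B,B'$ has two outgoing edges $v^{+}\to z_B$, $v^{+}\to z_{B'}$, so two paths from different parts that both end at $v$ can leave through different bag-vertices — exactly the \Rpaths relaxation. To make paths from distinct parts $\mathcal{P}_i$ able to share an \emph{internal} vertex $v$, I would \emph{not} split vertices shared across copies into a single node; instead each copy $G_i$ has its own fresh split vertices $v^{-}_i\to v^{+}_i$, so disjointness is only enforced within a copy, matching condition (a) of \autoref{def:graph-source-sequence}. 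Then $A=\{a\}$, $B_{\mathrm{sink}}=\{b\}$.

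With this $H$, I would prove two correspondences. First, a family of $p$ disjoint $a\to b$ paths in $H$ yields $p$ \Rpaths: each such path leaves $a$ into some $S_i$, travels inside the corresponding copy $G_i$ (the split vertices guarantee it traces an honest path there and that two paths in the same copy are vertex-disjoint), then exits via some $v^{+}_i\to z_B\to b$; setting $h$ of that path to $B$ gives a well-defined bijection to $\mathcal{B}^*=\{B : z_B \text{ used}\}$ because $z_B$ has capacity one, and $\sink(P)=v\in B$ as required; conversely any \Rpath collection with bijection $h$ routes back through the $z_{h(P)}$ vertices to give disjoint $a\to b$ paths. Second, I would show that minimum $(A,B_{\mathrm{sink}})$-separators in $H$ are exactly minimum-order \Rcut{s}: given a separator $Z$ in $H$, the split vertices $v^{\pm}_i$ it kills project to a set $X_i\subseteq V(G_i)$, the bag-vertices $z_B\in Z$ give a subfamily whose complement is $\mathcal{B}'$, and because there is no need to ever delete $a$ or $b$ or the edges out of $a$ (those can always be rerouted to a cheaper deletion — a standard uncrossing argument, or just add enough parallel copies), one checks $|Z| = |\mathcal{B}\setminus\mathcal{B}'| + \sum_i |X_i| = \order(\mathcal{B}',\mathcal{X})$, and that $X_i$ is indeed an $(S_i,\cupall\mathcal{B}')$-separator in $G_i$ since any surviving $S_i\to\cupall\mathcal{B}'$ path would survive in $H$ and reach $b$. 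The reverse direction (an \Rcut gives a separator of the same size in $H$) is immediate. Menger on $H$ then gives max \Rpaths $=$ min \Rcut order, and the algorithmic claim follows since $|V(H)| = \Ocal(\ell\cdot n^* + |\mathcal{B}|)$, so \autoref{thm:Menger} runs in $\Ocal((\ell\cdot n^* + |\mathcal{B}|)^2)$ time (the statement's $k$ should read $\ell$).

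The main obstacle I anticipate is the careful handling of vertex-splitting so that three different disjointness/sharing requirements hold simultaneously: (i) paths within the same part must be vertex-disjoint in $G_i$, (ii) paths in different parts may freely share internal vertices, and (iii) the bag-charging is a genuine bijection even when a single vertex lies in many bags. Getting the gadget exactly right — in particular convincing the reader that a minimum separator in $H$ never needs to pay for the super-source/super-sink or for a bag-vertex $z_B$ that it would be cheaper to replace by deleting the vertices of $B$ in the relevant copies, and vice versa — is where the real (if routine) work lies; everything after that is a direct appeal to Menger's Theorem. An alternative, cleaner write-up would define $H$ and then simply state the two bijections and verify orders, relegating the uncrossing remark to a sentence, which I expect is how the authors proceed.
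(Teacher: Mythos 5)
Your construction is essentially the paper's $G^{\sf R}$: disjoint copies of each $G_i$, one fresh vertex $z_B$ (the paper's $b_j$) per bag, and edges from each copy of $v\in B$ to $z_B$, so that capacity one at $z_B$ enforces the bijection $h$ while disjointness is only required within a copy. The only cosmetic difference is that you add a super-source/super-sink and split vertices to reduce to the edge version of Menger, whereas the paper applies the vertex-disjoint form of \autoref{thm:Menger} directly to the sets $S'=\bigcup_i S'_i$ and $\{b_1,\dots,b_r\}$ (so there is no uncrossing argument about paying for $a$ or $b$ to worry about); you also correctly note that the $k$ in the stated running time should read $\ell$.
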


Observe that the right side of the pair forming an \Rcut cannot be simply a set of vertices $X$ because, for example, a vertex $v \in X$ can be in two distinct $G_i$ and $G_j$ and be part of the separator in $G_i$ but {\sl not} part of the separator in $G_j$.
In this case $v$ would be counted twice in the order of the \Rcut, but it is only used in one separator.
Also notice that when $|\mathcal{B}|$ is larger than the allowed budget to construct an \Rcut, every \Rcut of appropriate order must identify some separator in some $G_i$, i.e., $\mathcal{X} \neq \emptyset$.
In fact, there are only two options for the size of $\mathcal{X}$: either $\mathcal{B}' = \emptyset$ and hence $\mathcal{X} = \emptyset$, or $\mathcal{B}' \neq \emptyset$ and $|\mathcal{X}| = \ell$.
In the latter case, it is possible that some $X_i \in \mathcal{X}$ are empty.
In \autoref{lem:R-paths-or-small-cut} we exploit this fact to show how to either find in a digraph $G$ a large collection of \Rpaths from a set $S$ to the vertices appearing in the elements of some sufficiently large collection $\mathcal{B}$ (corresponding to a bramble), or a small separator intersecting every path from $S$ to all such vertices.

Next, we present the proofs of the min-max relations (\cref{theo:min-max-statement-var-1,theo:min-max-statement-var-2,theo:min-max-statement-var-3}) in two different ways.
In \autoref{subsection:proofs-by-matroids} we show how to use classical results of matroids to obtain the relations.
In \autoref{subsection:proofs-by-menger} we show how to prove prove the results by applying Menger's Theorem to carefully constructed auxiliary digraphs.
We remark that the bounds on the running times of all three theorems are obtained from the second approach.

\subsection{Alternative proofs using matroids}

\label{subsection:proofs-by-matroids}

In this section we show how to prove \cref{theo:min-max-statement-var-1,theo:min-max-statement-var-2,theo:min-max-statement-var-3} by using matroid union and intersection from gammoids and transversal matroids.
For the remainder of this subsection, let $(\mathcal{F, S})$ be a digraph-source sequence with $\mathcal{F} = (G_1, \ldots, G_\ell)$ and $\mathcal{S} = (S_1, \ldots, S_\ell)$, for a positive integer $\ell$.

We remark that each proof will relate \Dpaths, \Tpaths and \Rpaths to some application of matroid union or intersection starting from gammoids or transversal matroids.
In what follows, we only prove the min-max relation between a maximum collection of paths and the minimum order of a cut.
We ignore the algorithmic statement of those results for two reasons.
First, the provided proofs can be turned into polynomial time algorithms simply by using the polynomial time algorithms for matroid union and intersection.
And second because the complexity of the matroid algorithms is worse than the complexity of the corresponding algorithms obtained in \autoref{subsection:proofs-by-menger}.

\bigskip
\noindent\textbf{Proof for \Dpaths and \Dcut{s}.}
%
%
Let $B \subseteq  \bigcup_{i=1}^{\ell}V(G_i)$ and $M_i$ be the gammoid $M(G_i, S_i, B\cap V(G_i))$.
Let $\mathscr{P}$ be a $(\mathcal{F, S})$-respecting collection of \Dpaths with respect to $B$ and let $\mathcal{P}_1, \ldots, \mathcal{P}_\ell$ be a defining partition of $\mathscr{P}$.
The key observation here is that $\sink(\mathcal{P}_i)$ is independent in $M_i$ for $u\in [\ell]$, which implies $\sink(\mathscr{P})$ is independent in $M_1 \vee \cdots \vee M_\ell$.
Since the independent sets in $M_1 \vee \cdots \vee M_\ell$ are disjoint unions of independent sets, we conclude the independent sets in $M_1 \vee \cdots \vee M_\ell$ are precisely the sets $\sink(\mathscr{P})$ where $\mathscr{P}$ are \Dpaths with respect to $\mathcal{F}$, $\mathcal{S}$ and $B$.
We immediately conclude the following result.

\begin{proposition} \label{prop:dpathmatroid}
    Let $B \subseteq  \bigcup_{i=1}^{\ell}V(G_i)$ and $M_i$ be the gammoid $M(G_i, S_i, B\cap V(G_i))$ with rank function $r_i$, for $i\in [\ell]$.
    If $\mathcal{I} = \{\sink(\mathscr{P}) \mid \mathscr{P}\text{ is a collection of \Dpaths with respect to } B\}$, then $\mathcal{I}$ is the collection of independent sets of a matroid on $B$ with rank function $r$ given by
    $$ r(U) = \min_{T \subseteq U} \{|U\setminus T| + \sum_{i\in [\ell]}r_i(T\cap V(G_i))\}$$
    for $U\subseteq B$.
\end{proposition}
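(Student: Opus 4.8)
The plan is to observe that $\mathcal{I}$ is precisely the collection of independent sets of the matroid union $M_1 \vee \cdots \vee M_\ell$, and then to read off both the matroid property and the rank formula from the Matroid union theorem. Note that, by \autoref{prop:gammoidmatroid}, each $M_i$ is a gammoid on the ground set $B \cap V(G_i)$, so $M_1 \vee \cdots \vee M_\ell$ is a matroid on $\bigcup_{i=1}^{\ell}(B\cap V(G_i)) = B$, as required.

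First I would prove the inclusion $\mathcal{I} \subseteq \mathcal{I}(M_1 \vee \cdots \vee M_\ell)$. Let $\mathscr{P}$ be a collection of \Dpaths with respect to $B$ with defining partition $\mathcal{P}_1, \ldots, \mathcal{P}_\ell$. For each $i \in [\ell]$, the set $\mathcal{P}_i$ consists of pairwise disjoint paths of $G_i$ whose sources lie in $S_i$ and whose sinks lie in $B$ (by condition~\lipItem{(2a)}); since these paths live in $G_i$, their sinks lie in $B \cap V(G_i)$, so $\mathcal{P}_i$ witnesses that $\sink(\mathcal{P}_i) \in \mathcal{I}(M_i)$. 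Condition~\lipItem{(1)} guarantees that $\sink(\mathcal{P}_1), \ldots, \sink(\mathcal{P}_\ell)$ are pairwise disjoint, hence $\sink(\mathscr{P}) = \bigcup_{i=1}^{\ell}\sink(\mathcal{P}_i)$ is a disjoint union of sets, each independent in the corresponding $M_i$, and therefore independent in $M_1 \vee \cdots \vee M_\ell$.

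Next I would prove the reverse inclusion. Given $I \in \mathcal{I}(M_1 \vee \cdots \vee M_\ell)$, write $I = I_1 \cup \cdots \cup I_\ell$ with the union disjoint and $I_i \in \mathcal{I}(M_i)$ for each $i$ (possible since independence is hereditary, as noted before the Matroid union theorem). By definition of the gammoid $M_i$, there is a collection $\mathcal{P}_i$ of disjoint $S_i \to B\cap V(G_i)$ paths in $G_i$ with $\sink(\mathcal{P}_i) = I_i$, and $|\mathcal{P}_i| = |I_i|$ since these sinks are distinct. Setting $\mathscr{P} = \bigcup_{i=1}^{\ell}\mathcal{P}_i$ with the partition $\mathcal{P}_1, \ldots, \mathcal{P}_\ell$, one checks directly that $\mathscr{P}$ is $(\mathcal{F, S})$-respecting, that condition~\lipItem{(1)} holds (sinks are distinct within each part because the paths are disjoint, and across parts because the $I_i$ are disjoint), and that condition~\lipItem{(2a)} holds since $\sink(\mathscr{P}) = I \subseteq B$. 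Hence $I = \sink(\mathscr{P}) \in \mathcal{I}$.

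Finally, having established $\mathcal{I} = \mathcal{I}(M_1 \vee \cdots \vee M_\ell)$, the Matroid union theorem immediately gives that $\mathcal{I}$ is the collection of independent sets of a matroid on $B$ whose rank function is $r(U) = \min_{T\subseteq U}\{|U\setminus T| + \sum_{i\in[\ell]} r_i(T\cap E(M_i))\}$ for $U\subseteq B$. Since $E(M_i) = B\cap V(G_i)$ and $T \subseteq U \subseteq B$, we have $T \cap E(M_i) = T \cap V(G_i)$, which yields exactly the claimed formula. I do not expect a genuine obstacle here: the two points requiring care are the bookkeeping of the ground sets $B\cap V(G_i)$ versus $V(G_i)$, and the verification that the reconstructed path system $\mathscr{P}$ satisfies all the defining conditions of \Dpaths; both are routine.
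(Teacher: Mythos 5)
Your proof is correct and takes essentially the same route as the paper: identify $\mathcal{I}$ with $\mathcal{I}(M_1 \vee \cdots \vee M_\ell)$ where each $M_i$ is the gammoid $M(G_i, S_i, B\cap V(G_i))$, and then read off the rank formula from the Matroid union theorem. The paper's version is terser (it states the two-way correspondence in a couple of lines), but your more explicit verification of both inclusions and of the ground-set bookkeeping $T\cap E(M_i) = T\cap V(G_i)$ matches its argument step for step.
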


%

The matroid obtained in \autoref{prop:dpathmatroid} is called a \emph{\Dpaths matroid} and we denote it by $M(\mathcal{F}, \mathcal{S}, B)$.

Now, we can prove \autoref{theo:min-max-statement-var-1} by showing that the formula provided for $r(B)$ minimizes the order of a \Dcut.
Let $\mathcal{X} = (X_0, \ldots, X_\ell)$ be a \Dcut.
For $T = B\setminus X_0$, we get that $X_i$ is a $(S_i, T \cap V(G_i))$-separator which implies $|X_i| \ge r_i(T\cap V(G_i))$, for $i\in [\ell]$, and we get $\order(\mathcal{X}) \ge r(B)$.
To prove equality, let $T\subseteq B$ achieve equality for the formula of $r(B)$.
Let $X_0 = B\setminus T$ and choose the \Dcut $\mathcal{X} = (X_0, \ldots, X_\ell)$ by letting $X_i$ be a $(S_i, B\setminus X_0)$-separator of size $r_i(T\cap V(G_i))$.
This can be done since $(B\setminus X_0)\cap V(G_i) = T\cap V(G_i)$.


\bigskip
\noindent\textbf{Proof for \Tpaths and \Tcut{s}.} Let $\mathcal{B}$ be a family of subsets of $\bigcup_{i=1}^{\ell}V(G_i)$.
We note that, by definition, a collection of $(\mathcal{F, S})$-respecting paths $\mathscr{P}$ are \Tpaths with respect to $\mathcal{B}$ if, and only if, they are \Dpaths with respect to $\cupall \mathcal{B}$ and $\sink(\mathscr{P})$ is a partial transversal of $\mathcal{B}$.
Let $M_1$ be the \Dpaths matroid $M(\mathcal{F, S}, \cupall \mathcal{B})$ with rank function $r_1$ and $M_2$ be the transversal matroid $M[\mathcal{B}]$ with rank function $r_2$.
We conclude that $\{\sink(\mathscr{P}) \mid \mathscr{P}\text{ is a collection of \Tpaths with respect to }\mathcal{F}, \mathcal{S}\text{ and }\mathcal{B}\} = \mathcal{I}(M_1) \cap \mathcal{I}(M_2)$.

By using the matroid intersection theorem, we get that the maximum size of $\sink(\mathscr{P})$, and equivalently of $\mathscr{P}$, is equal to $$\min_{U\subseteq \cupall \mathcal{B}} \{r_1(U) + r_2(\cupall \mathcal{B}\setminus U)\}.$$
From \autoref{prop:dpathmatroid} and \autoref{prop:transversalrank} we simplify the previous equation into
$$ \min_{U\subseteq \cupall \mathcal{B}, T\subseteq U, \mathcal{B}' \subseteq \mathcal{B}}\left\{ |\mathcal{B}\setminus \mathcal{B}'| + |\cupall \mathcal{B}' \setminus U| + |U\setminus T| + \sum_{i\in [\ell]} |X_i|\right\}$$
where $X_i$ is a minimum size $(S_i, T\cap V(i))$-separator.
Considering only the value of $U$, note that this equation is minimized when $U = \cupall \mathcal{B}'$.
By renaming $\cupall \mathcal{B}'\setminus T$ by $X_0$ we get that the maximum size of $\mathscr{P}$ is
$$ \min_{\mathcal{B}' \subseteq \mathcal{B}, X_0\subseteq \cupall \mathcal{B}'}\left\{ |\mathcal{B}\setminus \mathcal{B}'| + |X_0| + \sum_{i\in [\ell]} |X_i|\right\}$$
where $X_i$ is a minimum size $(S_i, \cupall \mathcal{B}' \setminus X_0)$-separator.
We note the last equation is equal to $\order(\mathcal{B}', \mathcal{X})$ for some \Tcut with respect to $\mathcal{F}$, $\mathcal{S}$, and $\mathcal{B}$.
To complete the proof, note that for any \Tcut $(\mathcal{B}', \mathcal{X})$, we directly get
$$\order(\mathcal{B}', \mathcal{X}) \ge \min_{\mathcal{B}' \subseteq \mathcal{B}, X_0\subseteq \cupall \mathcal{B}'}\left\{ |\mathcal{B}\setminus \mathcal{B}'| + |X_0| + \sum_{i\in [\ell]} |X_i|\right\}$$
where $X_i$ is a minimum size $(S_i, \cupall \mathcal{B}' \setminus X_0)$-separator.

\bigskip
\noindent\textbf{Proof for \Rpaths and \Rcut{s}.}
Let $\mathcal{B}$ be a family of subsets of $\bigcup_{i=1}^{\ell}V(G_i)$.
We remark that this proof is very similar to the proof obtained for the min-max duality of \Tpaths and \Tcut{s}.
The difference is that we construct the \Dpaths matroid on disjoint copies of the graphs in $\mathcal{F} = (G_1, \ldots, G_\ell)$ and moving the sets in $\mathcal{S} = (S_1, \ldots, S_\ell)$ to the copied vertices.
In this construction, the sets in $\mathcal{B}$ maintain all copies of vertices it contained initially.
In this case, a partial transversal of the modified problem is a bijection between paths and a subfamily of $\mathcal{B}$, and the paths between different parts of a defining partition have no other constraint.
We remark that creating copies of the graphs in $\mathcal{F}$ is equivalent to assuming that $G_1, \ldots, G_\ell$ are disjoint and following the same proof for \Tpaths and \Tcut{s}.


\subsection{Menger-based proofs}\label{subsection:proofs-by-menger}

In this section we show how to obtain \cref{theo:min-max-statement-var-1,theo:min-max-statement-var-2,theo:min-max-statement-var-3} applying Menger's Theorem (cf. \autoref{thm:Menger}) to auxiliary digraphs constructed from given the digraph-source sequences.
Since all constructions begin from the same digraph, it is convenient to adopt the following notation.
\begin{definition}
\label{def:disjoint-copies-digraph}
Let $\mathcal{F} = (G_1, \ldots, G_\ell)$ be a sequence of (not necessarily disjoint) digraphs.
We denote by \emph{$G(\mathcal{F})$} the digraph formed by taking disjoint copies of each $G_i$.
Formally, for each $i \in [\ell]$ and $v \in V(G_i)$ we add to $G(\mathcal{F})$ the vertex $v_i$, set $V_i = \{v_i \mid v \in V(G_i)\}$, and say that $v_i$ is a \emph{copy} of $v$.
Then, for each $u_i,v_i \in V_i$ we add the edge $(u_i, v_i)$ to $G(\mathcal{F})$ if an only if $(u,v) \in G_i$.
\end{definition}
In other words, in $G(\mathcal{F})$ each subgraph induced by $V_i$, for $i \in [\ell]$, is exactly a copy of $G_i$, and all such copies are disjoint.
For the remaining of this subsection, we assume that $(\mathcal{F, S})$ is a digraph-source sequence with $\mathcal{F} = (G_1, \ldots, G_\ell)$ and $\mathcal{S} = (S_1, \ldots, S_\ell)$.

\medskip
\noindent\textbf{Proof for \Dpaths and \Dcut{s}.}
In this part, we assume that $B \subseteq \bigcup_{i=1}^{\ell}V(G_i)$.
From $\mathcal{F, S}$, and $B$ we construct an auxiliary digraph $G^{\sf D}$ containing $B$ and a set of vertices $S'$ such that, from any set of disjoint $S' \to B$ paths in $G^{\sf D}$, we can construct an equally large set of \Dpaths.

We begin with $G^{\sf D} = G(\mathcal{F})$.
Then, we add to $G^{\sf D}$ all vertices of $B$ and an edge from each $u \in \bigcup_{i=1}^{\ell}V^i$ to $v \in B$ if $u$ is a copy of $v$.
Define $S'_i = \{s_i \mid s \in S_i\}$ and $S' = \bigcup_{i = 1}^{\ell}S'_i$.
Notice that $B$ is an independent set of $G^{\sf D}$ and that each $v \in B$ has at least one copy in some $V_i$.
See \autoref{fig:construction-D-paths} for an illustration of this construction.

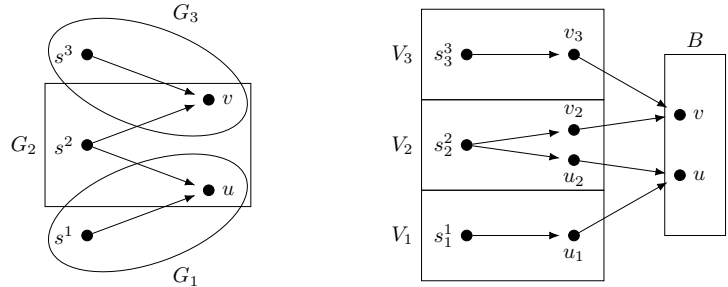
\begin{figure}[h]
\centering
\scalebox{.8}{
\begin{tikzpicture}
\node[draw, rectangle, minimum width=3cm, minimum height=1.5cm, label=180:{$V_1$}] (d1) at (0,0) {};
\node[blackvertex, label=180:{$s^1_1$}] (s1) at ($(d1.west) + (.75, 0)$) {};
\node[draw, rectangle, minimum width=3cm, minimum height=1.5cm, label=180:{$V_2$}] (d2) at (0,1.5) {};
\node[blackvertex, label=180:{$s^2_2$}] (s2) at ($(d2.west) + (.75, 0)$) {};
\node[draw, rectangle, minimum width=3cm, minimum height=1.5cm, label=180:{$V_3$}] (d3) at (0,3) {};
\node[blackvertex, label=180:{$s^3_3$}] (s3) at ($(d3.west) + (.75, 0)$) {};
\node[draw, rectangle, label=90:{$B$}, minimum width=1cm, minimum height = 3cm] (B) at (3,1.5) {};

\node[blackvertex, label=-90:{$u_1$}] (u1) at ($(d1.east) + (-.5, 0)$) {};

\node[blackvertex, label=-90:{$u_2$}] (u2) at ($(d2.east) + (-.5, -0.25)$) {};
\node[blackvertex, label=90:{$v_2$}] (v2) at ($(d2.east) + (-.5, 0.25)$) {};

\node[blackvertex, label=90:{$v_3$}] (v3) at ($(d3.east) + (-.5, 0)$) {};

\node[blackvertex, label=0:{$u$}] (u) at ($(B) + (-0.25, -.5)$) {};
\node[blackvertex, label=0:{$v$}] (v) at ($(B) + (-0.25, .5)$) {};

\draw[arrow] (s1) -- (u1);

\draw[arrow] (s2) -- (u2);
\draw[arrow] (s2) -- (v2);

\draw[arrow] (s3) -- (v3);

\draw[arrow] (v3) -- (v);

\draw[arrow] (v2) -- (v);
\draw[arrow] (u2) -- (u);

\draw[arrow] (u1) -- (u);

\begin{scope}[xshift=-7cm]
\node[blackvertex, label=180:{$s^1$}] (s1) at (0,0) {};
\node[blackvertex, label=180:{$s^2$}] (s2) at (0,1.5) {};
\node[blackvertex, label=180:{$s^3$}] (s3) at (0,3) {};

\node[blackvertex, label=0:{$u$}] (u) at ($(s2) + (2,-.75)$) {};
\node[blackvertex, label=0:{$v$}] (v) at ($(s2) + (2,.75)$) {};
\node[draw, ellipse, fit = (s3)(v), rotate = -20, label=90:{$G_3$}] {};
\node[draw, ellipse, fit = (s1)(u), rotate = 20, label=-90:{$G_1$}] {};
\node[draw, rectangle, fit = (s2)(v)(u), inner xsep = 17pt, inner ysep=5pt, label=180:{$G_2$}] {};
\draw[arrow] (s1) -- (u);
\draw[arrow] (s2) -- (u);
\draw[arrow] (s2) -- (v);
\draw[arrow] (s3) -- (v);

\end{scope}
\end{tikzpicture}%
}
\caption{On the left, digraphs $G_1$, $G_2$, and $G_3$, with $S_i = \{s^i\}$ for $i \in [3]$.
In the example $B = \{u,v\}$ forms the set of destinations for the \Dpaths we want to construct, taken from the vertex set of the digraphs $G_i$.
On the right, the resulting construction of $G^{\sf D}$. Notice that $v_2$ and $v_3$ are copies of $v$, that $u_1$ and $u_2$ are copies of $u$, and that each $s^i_i$ is a copy of $s_i$.}
\label{fig:construction-D-paths}
\end{figure}

Now, for $i \in [\ell]$ and $B' \subseteq B$, let $x_i(B')$ be the size of a minimum $(S_i, B')$-separator in $G_i$ and
\begin{equation}\label{eq:1}
f^{\sf D}(\mathcal{F}, \mathcal{S}, B) = \min_{B' \subseteq B}\left\{|B \setminus B'| + \sum_{i=1}^{\ell} x_i(B') \right\}.
\end{equation}

The definition of $f^{\sf D}(\mathcal{F, S}, B)$ comes from the definition of \Dcut{s}: if $B'$ is the subset of $B$ witnessing the value of $f^{\sf D}(\mathcal{F, S}, B)$ then we construct the \Dcut $\mathcal{X} = (B\setminus B', X_1, \ldots, X_\ell)$ such that, for each $i \in [\ell]$, the set $X_i \subseteq V(G_i)$ is a minimum $(S_i, B')$-separator in $G_i$.
Therefore the minimum order of a \Dcut is at most $f^{\sf D}(\mathcal{F, S}, B)$.
On the other hand, when constructing a \Dcut, we first pay one unit for each $b \in B$ that we want to include in our \Dcut (i.e., the value $|B \setminus B'|$) and then, for each $i \in [\ell]$, we pay the minimum cost to separate $S_i$ from the remaining vertices $B' \subseteq B$ that were not included in our \Dcut (i.e., the value $ \sum_{i=1}^{\ell}x_i(B')$).
Thus any \Dcut has order at least $f^{\sf D}(\mathcal{F, S}, B)$ and it follows that the minimum order of a \Dcut is equal to $f^{\sf D}(\mathcal{F, S}, B)$.
We proceed to show that the maximum number of \Dpaths is also equal to $f^{\sf D}(\mathcal{F, S},B)$.
This implies that the maximum number of \Dpaths is equal to the minimum order of a \Dcut, and thus  we obtain a min-max formula.
We begin by showing how to associate paths in $G^{\sf D}$ with \Dpaths, and the size of $(S', B)$-separators in $G^{\sf D}$ with $f^{\sf D}(\mathcal{F, S}, B)$.

\begin{lemma}\label{lemma:paths-association-variation-1}
Given $\mathcal{F, S},$ and $B$, it holds that
\begin{romanenumerate}
  \item for any set of \Dpaths $\mathscr{P}$ there is a set $\mathscr{P'}$ of disjoint $S' \to B$ paths in $G^{\sf D}$ with $|\mathscr{P'}| \geq |\mathscr{P}|$, and
  \item for any set $\mathscr{P'}$ of disjoint $S' \to B$-paths in $G^{\sf D}$ there is a set of \Dpaths $\mathscr{P}$ with $|\mathscr{P}| \geq |\mathscr{P'}|$.
\end{romanenumerate}
\end{lemma}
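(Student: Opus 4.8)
The plan is to establish the two directions by translating paths across the construction of $G^{\sf D}$, exploiting the fact that $G^{\sf D}$ contains a disjoint copy $V_i$ of each $G_i$ together with the ``projection'' edges from each copy $v_i$ to the original vertex $v \in B$. For direction (i), I would start from a set $\mathscr{P}$ of \Dpaths with defining partition $\mathcal{P}_1, \ldots, \mathcal{P}_\ell$. For each $i \in [\ell]$ and each $P \in \mathcal{P}_i$, the path $P$ lives in $G_i$ and starts in $S_i$, so its vertex-by-vertex copy inside $V_i$ is a path from $S_i' $ to the copy of $\sink(P)$; appending the projection edge from that copy to $\sink(P) \in B$ yields an $S' \to B$ path $P'$ in $G^{\sf D}$. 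I would then argue these paths are pairwise disjoint: paths coming from the same part $\mathcal{P}_i$ are disjoint because the $P$'s are disjoint in $G_i$ and copying preserves this; paths coming from different parts $\mathcal{P}_i, \mathcal{P}_j$ with $i\neq j$ use vertices in the disjoint sets $V_i$ and $V_j$, so they can only meet in $B$ — but condition \lipItem{(1)} of \Dpaths guarantees all the sinks $\sink(P)$ are distinct, so even the terminal vertices in $B$ are distinct. Hence $|\mathscr{P}'| = |\mathscr{P}|$, which is $\geq |\mathscr{P}|$.

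For direction (ii), I would take a set $\mathscr{P}'$ of disjoint $S' \to B$ paths in $G^{\sf D}$ and reverse the construction. Since $B$ is an independent set in $G^{\sf D}$ and every edge into a vertex of $B$ comes from one of its copies, each $P' \in \mathscr{P}'$ must enter $B$ exactly at its last vertex, traversing the unique projection edge $v_i \to v$; in particular the penultimate vertex of $P'$ lies in some $V_i$ and, since $S' = \bigcup_i S_i'$ with $S_i' \subseteq V_i$ and each $V_i$ induces a disconnected-from-the-rest copy of $G_i$ (no edges leave $V_i$ except the projection edges into $B$), the whole path $P'$ except its last vertex lies inside a single $V_i$. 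Projecting it back gives a path $P$ in $G_i$ from $S_i$ to $\sink(P) \in B$. Grouping the resulting paths by the index $i$ of the copy they came from defines a partition $\mathcal{P}_1, \ldots, \mathcal{P}_\ell$; disjointness of $\mathscr{P}'$ within each $V_i$ gives disjointness of each $\mathcal{P}_i$ in $G_i$, and disjointness of the terminal vertices in $B$ gives condition \lipItem{(1)}, while \lipItem{(2a)} holds by construction. Thus $\mathscr{P}$ is a set of \Dpaths with $|\mathscr{P}| = |\mathscr{P}'| \geq |\mathscr{P}'|$.

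The only mildly delicate point — and the step I would be most careful about — is verifying that in direction (ii) a path of $\mathscr{P}'$ really is confined (apart from its endpoint) to a single copy $V_i$; this is where the precise structure of $G^{\sf D}$ matters, namely that the only edges incident to $B$ are the incoming projection edges $v_i \to v$ and that $B$ is independent, so once a path reaches $B$ it stops, and before that it never leaves the $V_i$ it started in. Everything else is a routine bookkeeping translation between a path and its copy, with disjointness preserved in both directions because distinct copies $V_i, V_j$ are vertex-disjoint and the \Dpaths condition \lipItem{(1)} handles collisions at the shared set $B$. I would also note in passing that one may assume without loss of generality that no two paths in $\mathscr{P}'$ share an endpoint in $B$, which is automatic since they are vertex-disjoint.
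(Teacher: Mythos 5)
Your proof is correct and follows essentially the same approach as the paper: copy each path of $\mathcal{P}_i$ into $V_i$ and append the projection edge into $B$ for direction (i), and reverse this by restricting each $P' \in \mathscr{P}'$ to the copy $V_i$ it lives in (except for its last vertex) and grouping by $i$ for direction (ii). The disjointness bookkeeping — separate $V_i$'s, $B$ independent with only incoming projection edges, and condition \lipItem{(1)} for the sinks — matches the paper's argument, and your explicit justification that a path of $\mathscr{P}'$ cannot leave its copy $V_i$ before reaching $B$ is a slightly more careful spelling-out of the step the paper summarizes as ``every vertex of each $P'_i$, except $\sink(P'_i)$, is contained in some $V_j$.''
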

\begin{proof}
To prove \lipItem{(i)} let $\mathscr{P} = \{P_1, \ldots, P_r\}$ be a set of \Dpaths with partition $\mathcal{P}_1, \ldots, \mathcal{P}_\ell$ as in the definition of \Dpaths.
From each $P_i$ contained in $\mathcal{P}_j$ construct a path $Q'_i$ in $G^{\sf D}$ by following the copies of the vertices of $P_i$ as they appear in $V_j$.
Note that this ensures that each $Q'_i$ has $\source(Q'_i) \in S'_j$. 
Now we construct $P'_i$ from $Q'_i$ by appending to the latter the edge from $\sink(Q'_i)$ to the vertex $v \in B$ such that $\sink(Q'_i)$ is the copy of $v$ in $V_j$. 
Set $\mathscr{P'} = \{P'_1, \ldots, P'_r\}$ and \lipItem{(i)} follows from observing that the defining properties of \Dpaths immediately imply the disjointness of the paths in $\mathscr{P'}$.

To prove \lipItem{(ii)} let $\mathscr{P'} = \{P'_1, \ldots, P'_r\}$ be a set of disjoint $S' \to B$ paths in $G^{\sf D}$.
Since there are no edges between vertices in $B$, every vertex of each $P'_i$, except $\sink(P'_i)$, is contained in some $V_j$ with $j \in [\ell]$.
Thus  we can follow each $P'_i$ in $G^{\sf D}$ to construct a path $P_i$ in some $G_j$ with vertex set $V(P_i) = \{v \in V(G_j) \mid v_j \in V(P'_i) \setminus \{\sink(P'_i)\}\}$.
In other words, $P_i$ contains the vertices of $V(G_j)$ whose copies form $V(P'_i)$ minus the last vertex of this path.
Let $\mathscr{P} = \{P_1, \ldots, P_r\}$ be the set of paths constructed this way and partition $\mathscr{P}$ into sets $\mathcal{P}_i = \{P \in \mathscr{P} \mid \source(P) \in S_i\}$ for each $i \in [\ell]$.
The choice of $\mathscr{P'}$ immediately implies that $\mathscr{P}$ is a set of \Dpaths: since the paths of the first are disjoint, it follows that any distinct pair of paths in $\mathcal{P}_i$ are disjoint in $G_i$ and thus \lipItem{(a)} of \autoref{def:graph-source-sequence} holds.
The choices of the parts $\mathcal{P}_i$ ensures that \lipItem{(b)} of \autoref{def:graph-source-sequence} holds.
Again by the choice of $\mathscr{P'}$ no two paths in $\mathscr{P}$ can share a vertex in $B$ and \lipItem{(1)} of \autoref{def:D-paths-and-D-cuts} holds.
Finally, as $\sink(P'_i) \in B$ for $i \in [\ell]$, by the choice of the edges entering $B$ in $G^{\sf D}$, and observing that $G^{\sf D}[B]$ has no edges, we conclude that $\sink(P) \in B$ for all $P \in \mathscr{P}$ and \lipItem{(2a)} also holds.
\end{proof}

\begin{lemma}\label{lemma:min-separator-variation-1}
Given $\mathcal{F, S}$, and  $B$, the minimum size of an $(S', B)$-separator in $G^{\sf D}$ is equal to $f^{\sf D}(\mathcal{F, S}, B)$.
\end{lemma}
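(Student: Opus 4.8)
The plan is to show both inequalities between the minimum size of an $(S', B)$-separator in $G^{\sf D}$ and $f^{\sf D}(\mathcal{F, S}, B)$ by exploiting the structure of $G^{\sf D}$: the only edges entering $B$ come from copies of vertices of $B$, and $G^{\sf D}[B]$ is edgeless. First I would prove that the minimum is at most $f^{\sf D}(\mathcal{F, S}, B)$. Given $B' \subseteq B$ witnessing the value of $f^{\sf D}(\mathcal{F, S}, B)$, build a separator $Z$ by taking $B \setminus B'$ together with, for each $i \in [\ell]$, the copies in $V_i$ of a minimum $(S_i, B')$-separator $X_i$ of $G_i$. Any $S' \to B$ path in $G^{\sf D}$ ends either at a vertex of $B \setminus B'$, hence is hit directly, or at a vertex of $B'$; in the latter case the path stays inside some $V_i$ until its penultimate vertex, so its restriction to $V_i$ is (a copy of) an $S_i \to B'$ path in $G_i$, which must meet $X_i$. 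Thus $Z$ is an $(S', B)$-separator of size $|B \setminus B'| + \sum_{i=1}^\ell |X_i| = f^{\sf D}(\mathcal{F, S}, B)$.

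For the reverse inequality, let $Z$ be a minimum $(S', B)$-separator in $G^{\sf D}$, and set $B' = B \setminus Z$, so $|B \setminus B'| \le |Z \cap B|$. The key observation is that $Z \cap V_i$ must be an $(S_i, B')$-separator in $G_i$: otherwise there is an $S_i \to B'$ path $P$ in $G_i$ avoiding $Z$, and its copy in $V_i$ followed by the edge into its last vertex $v \in B' \subseteq B \setminus Z$ gives an $S' \to B$ path in $G^{\sf D}$ avoiding $Z$, a contradiction (note $v \notin Z$ since $v \in B'$, and no vertex of the copied path lies in $Z$). Hence $|Z \cap V_i| \ge x_i(B')$ for each $i$. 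Since the sets $B$ and $V_1, \ldots, V_\ell$ are pairwise disjoint in $G^{\sf D}$, we get $|Z| \ge |Z \cap B| + \sum_{i=1}^\ell |Z \cap V_i| \ge |B \setminus B'| + \sum_{i=1}^\ell x_i(B') \ge f^{\sf D}(\mathcal{F, S}, B)$, as desired.

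I do not expect a serious obstacle here; the main point to be careful about is that the disjointness of $B, V_1, \ldots, V_\ell$ in $G^{\sf D}$ is what lets us add the costs of the parts of $Z$ without double counting, and dually that a vertex $v \in B$ may have several copies across different $V_i$, so the separator constructed in the first direction must include the relevant copy in each $V_i$ individually rather than $v$ itself. Combined with \autoref{lemma:paths-association-variation-1} and \autoref{thm:Menger} applied to $G^{\sf D}$ (with source set $S'$ and target set $B$), this lemma yields that the maximum number of \Dpaths equals $f^{\sf D}(\mathcal{F, S}, B)$, which in turn equals the minimum order of a \Dcut, establishing \autoref{theo:min-max-statement-var-1}.
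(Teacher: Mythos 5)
Your proposal is correct and follows essentially the same route as the paper: for the upper bound you assemble a separator from $B\setminus B'$ together with copies of minimum $(S_i,B')$-separators, and for the lower bound you decompose a minimum separator $Z$ into $Z\cap B$ and the pieces $Z\cap V_i$ and argue each $Z\cap V_i$ corresponds to an $(S_i, B\setminus Z)$-separator in $G_i$. The only cosmetic differences are that the paper phrases the lower bound as a contradiction whereas you argue it directly, and that you spell out more explicitly the step showing $Z\cap V_i$ really is a separator in $G_i$ (the paper states this with less justification); both are sound and the substance is identical.
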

\begin{proof}
Let $X$ be a minimum $(S', B)$-separator in $G^{\sf D}$, let $B' \subseteq B$, and for $i \in [\ell]$ let $X_i$ be an $(S'_i, B')$-separator in $G^{\sf D}$.
Clearly the set $(B\setminus B') \cup X_1 \cup \cdots \cup X_\ell$ contains an $(S', B)$-separator in $G^{\sf D}$, as every $S_i' \to B'$ path in $G^{\sf D}$ is blocked by $X_i$ and every other $S' \to B$ path is intersected by $B \setminus B'$.
Hence $|X| \leq f^{\sf D}(\mathcal{F, S}, B)$.

By contradiction, assume that $|X| < f^{\sf D}(\mathcal{F, S}, B)$. 
Then  $f^{\sf D}(\mathcal{F, S}, B) > |X| = |B \cap X| + \sum_{i=i}^{\ell} |X\cap V_i|$.
Defining $X_i = X \cap V_i$ for each $i \in [\ell]$ and $B^* = B \setminus X$, we conclude that $f^{\sf D}(\mathcal{F, S}, B) > |X| = |B \setminus B^*| + \sum_{i=1}^{\ell} |X_i|$.
Since each $X_i$ is a $(S_i, B^*)$-separator and $B^* \subseteq B$, this contradicts the minimality of $f^{\sf D}(\mathcal{F, S}, B)$ and thus $|X|$ has to be equal to $f^{\sf D}(\mathcal{F, S}, B)$, and the result follows.
\end{proof}

\begin{theorem}\label{theorem:min-max-variation-1}
Given $\mathcal{F}$, $\mathcal{S}$, and $B$, the maximum size of a set of \Dpaths is equal to $f^{\sf D}(\mathcal{F}, \mathcal{S}, B)$.
\end{theorem}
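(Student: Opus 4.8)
The plan is to combine the two preceding lemmas with Menger's Theorem, applied to the auxiliary digraph $G^{\sf D}$ between the source set $S'$ and the target set $B$. By \autoref{lemma:min-separator-variation-1}, the minimum size of an $(S',B)$-separator in $G^{\sf D}$ equals $f^{\sf D}(\mathcal{F},\mathcal{S},B)$; by Menger's Theorem (\autoref{thm:Menger}) applied with $A = S'$ and the target $B$, this equals the maximum size of a collection of disjoint $S' \to B$ paths in $G^{\sf D}$. It therefore suffices to show that the maximum size of a set of \Dpaths with respect to $\mathcal{F},\mathcal{S},B$ equals the maximum size of a collection of disjoint $S' \to B$ paths in $G^{\sf D}$, and this equality is exactly the content of \autoref{lemma:paths-association-variation-1}.

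First I would fix notation: let $p$ denote the maximum size of a set of \Dpaths, and let $q$ denote the maximum size of a collection of disjoint $S'\to B$ paths in $G^{\sf D}$. Applying \autoref{lemma:paths-association-variation-1}\lipItem{(i)} to a maximum set of \Dpaths yields a collection of disjoint $S'\to B$ paths of size at least $p$, so $q \geq p$. Applying \autoref{lemma:paths-association-variation-1}\lipItem{(ii)} to a maximum collection of disjoint $S'\to B$ paths yields a set of \Dpaths of size at least $q$, so $p \geq q$. Hence $p = q$. Then I would invoke \autoref{thm:Menger} on $G^{\sf D}$ with $A = S'$, $B = B$ to conclude that $q$ equals the minimum size of an $(S',B)$-separator in $G^{\sf D}$, which by \autoref{lemma:min-separator-variation-1} equals $f^{\sf D}(\mathcal{F},\mathcal{S},B)$. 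Chaining these equalities gives $p = f^{\sf D}(\mathcal{F},\mathcal{S},B)$, as desired.

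There is essentially no obstacle remaining: all the technical work has been front-loaded into the two lemmas and into the derivation that the minimum order of a \Dcut equals $f^{\sf D}(\mathcal{F},\mathcal{S},B)$ (carried out in the text preceding \autoref{lemma:paths-association-variation-1}). The only point requiring a small amount of care is making sure Menger's Theorem is applied to the correct pair of vertex sets — note that $B$ is an independent set in $G^{\sf D}$ and every vertex of $B$ has an in-edge from one of its copies, so $(S',B)$-separators in the sense of \autoref{thm:Menger} behave as expected — but this is immediate from the construction of $G^{\sf D}$. Combined with the earlier observation that the minimum order of a \Dcut equals $f^{\sf D}(\mathcal{F},\mathcal{S},B)$, this also completes the proof of \autoref{theo:min-max-statement-var-1} (the algorithmic running time $\Ocal((\ell\cdot n^* + |\mathcal{B}|)^2)$ following from the $\Ocal(n^2)$ bound in \autoref{thm:Menger} applied to $G^{\sf D}$, whose vertex count is $\Ocal(\ell\cdot n^* + |B|)$).
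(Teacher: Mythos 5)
Your proposal is correct and follows essentially the same route as the paper's own proof: apply Menger's Theorem to $G^{\sf D}$ between $S'$ and $B$, use \autoref{lemma:paths-association-variation-1} (both directions) to identify the maximum number of \Dpaths with the maximum number of disjoint $S'\to B$ paths in $G^{\sf D}$, and use \autoref{lemma:min-separator-variation-1} to identify the minimum $(S',B)$-separator size with $f^{\sf D}(\mathcal{F},\mathcal{S},B)$. Your write-up is if anything slightly cleaner, since the paper's version contains a small slip (writing $|\mathscr{P}|$ where it means $|\mathscr{P}'|$ in the Menger step), which you avoid by fixing the notation $p$ and $q$ explicitly.
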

\begin{proof}
The result easily follows by applying \autoref{thm:Menger} to $G^{\sf D}$ and \cref{lemma:paths-association-variation-1,lemma:min-separator-variation-1}.
Indeed, assume that $\mathscr{P}$ is a maximum set of \Dpaths.
By item \lipItem{(i)} of \autoref{lemma:paths-association-variation-1} we find a maximum set $\mathscr{P}'$ of disjoint $S' \to B$ paths in $G^{\sf D}$ with $|\mathscr{P'}| \geq |\mathscr{P}|$.
By \autoref{thm:Menger} we know that that $|\mathscr{P}|$ is equal to the size of a minimum $(S', B)$-separator $X$ in $G^{\sf D}$, and by \autoref{lemma:min-separator-variation-1} we know that $|X| = f^{\sf D}(\mathcal{F, S}, B)$.
Plugging together those statements we obtain $|\mathscr{P}| \leq |\mathscr{P}'| = |X| = f^{\sf D}(\mathcal{F, S}, B)$.
Similarly, applying \autoref{thm:Menger}, item \lipItem{(ii)} of \autoref{lemma:paths-association-variation-1}, and \autoref{lemma:min-separator-variation-1} we conclude that any maximum set of \Dpaths $\mathscr{P}$ satisfies $|\mathscr{P}| \geq f^{\sf D}(\mathcal{F, S}, B)$, and the result follows.
\end{proof}

In the paragraph after \autoref{eq:1} we argued that the minimum order of a \Dcut is equal to the value $f^{\sf D}(\mathcal{F, S}, B)$.
With this observation and applying \cref{thm:Menger,theorem:min-max-variation-1} we immediately obtain \autoref{theo:min-max-statement-var-1}.
Observe that the running time follows from the fact that $|V(G^D)| = \ell \cdot n^* + |B|$, where $n^*$ is the maximum number of vertices of a digraph in $\mathcal{F}$, and from the running time mentioned in the statement of \autoref{thm:Menger}, which is applied to $G^{\sf D}$.

\medskip
\noindent\textbf{Proof for \Tpaths and \Tcut{s}.}
In this part, we assume  that $\mathcal{B} = \{B_1, \ldots, B_r\}$ is a family of subsets of $\bigcup_{i = 1}^{\ell}V(G_i)$.
Similarly to the previous case, we begin with $G^{\sf T} = G(\mathcal{F})$.
Now, set $V^* = \bigcup_{i=1}^{r}B_i$, add to $G^{\sf T}$ every vertex appearing in $V^*$, and add an edge from $u \in \bigcup_{i=1}^{\ell}V_i$ to $v \in V^*$ if and only if $u$ is a copy of $v$.
Define $S'_i = \{s_i \mid s \in S_i\}$ and $S' = \bigcup_{i = 1}^{\ell}S'_i$.
To finish the construction, add to $G^{\sf T}$ a set of vertices $B = \{b_1, \ldots, b_r\}$ and one edge from $v \in V^*$ to $b_i$ if and only if $v \in B_i$.
Thus the vertices $\{b_1, \ldots, b_r\}$ induce an independent set in $G^{\sf T}$ and each of these vertices can be accessed only from $V^*$ in $G^{\sf T}$.
See \autoref{fig:construction-T-paths} for an example of this construction.

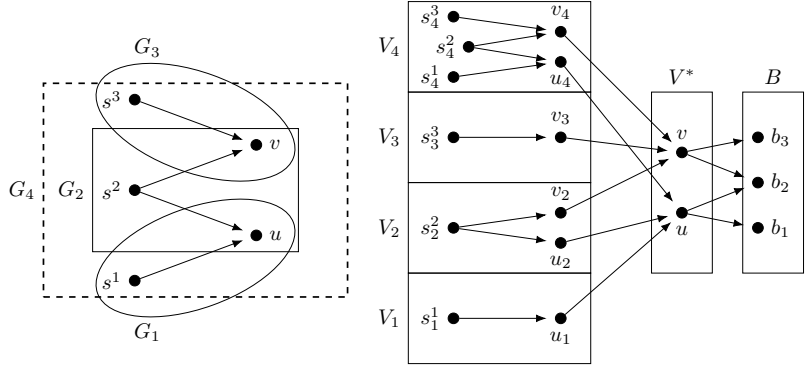
\begin{figure}[h]
\centering
\scalebox{.8}{
\begin{tikzpicture}
\node[draw, rectangle, minimum width=3cm, minimum height=1.5cm, label=180:{$V_1$}] (d1) at (0,0) {};
\node[blackvertex, label=180:{$s^1_1$}] (s1) at ($(d1.west) + (.75, 0)$) {};
\node[draw, rectangle, minimum width=3cm, minimum height=1.5cm, label=180:{$V_2$}] (d2) at (0,1.5) {};
\node[blackvertex, label=180:{$s^2_2$}] (s2) at ($(d2.west) + (.75, 0)$) {};
\node[draw, rectangle, minimum width=3cm, minimum height=1.5cm, label=180:{$V_3$}] (d3) at (0,3) {};
\node[blackvertex, label=180:{$s^3_3$}] (s3) at ($(d3.west) + (.75, 0)$) {};
\node[draw, rectangle, minimum width=3cm, minimum height=1.5cm, label=180:{$V_4$}] (d4) at (0,4.5) {};
\node[blackvertex, label=180:{$s^1_4$}] (s14) at ($(d4.west) + (.75, -0.5)$) {};
\node[blackvertex, label=180:{$s^2_4$}] (s24) at ($(d4.west) + (1, 0)$) {};
\node[blackvertex, label=180:{$s^3_4$}] (s34) at ($(d4.west) + (.75, 0.5)$) {};

\begin{scope}[yshift=.75cm]
\node[draw, rectangle, label=90:{$B$}, minimum width=1cm, minimum height = 3cm] (B) at (4.5,1.5) {};

\node[draw, rectangle, label=90:{$V^*$}, minimum width=1cm, minimum height = 3cm] (Vs) at (3,1.5) {};

\node[blackvertex, label=-90:{$u_1$}] (u1) at ($(d1.east) + (-.5, 0)$) {};

\node[blackvertex, label=-90:{$u_2$}] (u2) at ($(d2.east) + (-.5, -0.25)$) {};
\node[blackvertex, label=90:{$v_2$}] (v2) at ($(d2.east) + (-.5, 0.25)$) {};

\node[blackvertex, label=90:{$v_3$}] (v3) at ($(d3.east) + (-.5, 0)$) {};

\node[blackvertex, label=0:{$b_1$}] (b1) at ($(B) + (-0.25, -.75)$) {};
\node[blackvertex, label=0:{$b_2$}] (b2) at ($(B) + (-0.25, 0)$) {};
\node[blackvertex, label=0:{$b_3$}] (b3) at ($(B) + (-0.25, .75)$) {};

\node[blackvertex, label=-90:{$u$}] (u) at ($(Vs) + (0, -.5)$) {};
\node[blackvertex, label=90:{$v$}] (v) at ($(Vs) + (0, .5)$) {};

\node[blackvertex, label=-90:{$u_4$}] (u4) at ($(d4.east) + (-.5, -0.25)$) {};
\node[blackvertex, label=90:{$v_4$}] (v4) at ($(d4.east) + (-.5, 0.25)$) {};
\end{scope}

\draw[arrow] (s1) -- (u1);

\draw[arrow] (s2) -- (u2);
\draw[arrow] (s2) -- (v2);

\draw[arrow] (s3) -- (v3);

\draw [arrow] (s14) -- (u4);
\draw [arrow] (s24) -- (u4);
\draw [arrow] (s24) -- (v4);
\draw [arrow] (s34) -- (v4);

\draw[arrow] (v4) -- (v);
\draw[arrow] (u4) -- (u);

\draw[arrow] (v3) -- (v);

\draw[arrow] (v2) -- (v);
\draw[arrow] (u2) -- (u);

\draw[arrow] (u1) -- (u);

\draw[arrow] (u) -- (b1);
\draw[arrow] (u) -- (b2);
\draw[arrow] (v) -- (b2);
\draw[arrow] (v) -- (b3);

\begin{scope}[xshift=-6cm, yshift = .625cm]
\node[blackvertex, label=180:{$s^1$}] (s1) at (0,0) {};
\node[blackvertex, label=180:{$s^2$}] (s2) at (0,1.5) {};
\node[blackvertex, label=180:{$s^3$}] (s3) at (0,3) {};

\node[blackvertex, label=0:{$u$}] (u) at ($(s2) + (2,-.75)$) {};
\node[blackvertex, label=0:{$v$}] (v) at ($(s2) + (2,.75)$) {};
\node[draw, ellipse, fit = (s3)(v), rotate = -20, label=135:{$G_3$}] {};
\node[draw, ellipse, fit = (s1)(u), rotate = 20, label=-135:{$G_1$}] {};
\node[draw, rectangle, fit = (s2)(v)(u), inner xsep = 17pt, inner ysep=5pt, label=180:{$G_2$}] {};
\node[draw, rectangle, dashed, thick, fit = (s1)(s3)(v)(u), inner xsep = 40pt, inner ysep=5pt, label=180:{$G_4$}] {};
\draw[arrow] (s1) -- (u);
\draw[arrow] (s2) -- (u);
\draw[arrow] (s2) -- (v);
\draw[arrow] (s3) -- (v);

\end{scope}
\end{tikzpicture}}%
\caption{On the left, example graphs $G_1$, $G_2$, $G_3$, and $G_4$ with $S_i = \{s^i\}$ for $i \in [3]$ and $S_4 = \{s_1, s_2, s_3\}$. On the right, the resulting construction of $G^{\sf T}$ with $B_1 = \{u\}, B_2 = \{u,v\}$, and $B_3 = \{v\}$.}
\label{fig:construction-T-paths}
\end{figure}

For the corresponding min-max formula, let
\begin{equation}\label{eq:2}
f^{\sf T}(\mathcal{F, S, B}) = \min_{\mathcal{B'}\subseteq \mathcal{B}}\left\{|\mathcal{B \setminus B'}| + f^{\sf D}\left(\mathcal{F,S},\cupall\mathcal{B}'\right) \right\}.
\end{equation}

The definition of this function comes from observing the shape of \Tcut{s}.
As we did for \Dcut{s}, we give an intuition on how to associate the value $f^{\sf T}(\mathcal{F, S, B})$ with \Tcut{s} by thinking of the order of a \Tcut as the cost we pay to construct it.
Since we have no control over the size of each $A \in \mathcal{B}$, we cannot simply count vertices to define this cost since it is possible, for example, that some $A \in \mathcal{B}$ contains every vertex of every $G_i$.
This message is implicit in the definition of $f^{\sf T}(\mathcal{F, S, B})$: when looking at the left side of \autoref{eq:2} we see that we are allowed to choose a $\mathcal{B}'\subseteq \mathcal{B}$ and pay one unit for each $A \in \mathcal{B}\setminus \mathcal{B}'$ we want to {\sl ignore} as a destination for the \Rpaths (i.e., the value $ |\mathcal{B} \setminus \mathcal{B}'|$).
Then, we pay the cost to construct a \Dcut with respect to $\mathcal{F, S}$, and $\cupall\mathcal{B}'$.
The intuition for this is visible in the auxiliary digraph (see \autoref{fig:construction-T-paths}): by ``compressing'' each $B_i \in \mathcal{B}$ to a single vertex $b_i$ having as in-neighbors every $v \in B_i$ in $G^{\sf T}$ we ensure that, for any set of \Dpaths $\mathscr{Q}$, the set $\{s^{P} \mid P \in \mathscr{Q}\}$ forms a partial transversal of $\mathcal{B}$.

From the above discussion, one can verify that the minimum order of a \Tcut is at least $f^{\sf T}(\mathcal{F, S, B})$.
To see the that $f^{\sf T}(\mathcal{F, S, B})$ is at least the minimum order of a \Tcut, it suffices to construct a \Tcut from a $\mathcal{B}' \subseteq \mathcal{B}$ witnessing the value of the function, and thus those two values are equal.
We show that the maximum number of \Tpaths is also equal to $f^{\sf T}(\mathcal{F, S, B})$, and use this fact to obtain a min-max formula.
We begin by showing how to associate paths in $G^{\sf T}$ with \Tpaths, and the size of $(S', B)$-separators in $G^{\sf T}$ with $f^{\sf T}(\mathcal{F, S}, B)$.

\begin{lemma}\label{lemma:paths-association-variation-2}
Given $\mathcal{F, S}$, and $\mathcal{B}$, it holds that
\begin{romanenumerate}
  \item for any set of \Tpaths $\mathscr{P}$ there is a set $\mathscr{P'}$ of disjoint $S' \to B$ paths in $G^{\sf T}$ with $|\mathscr{P'}| \geq |\mathscr{P}|$, and
  \item for any set $\mathscr{P'}$ of disjoint $S' \to B$ paths in $G^{\sf T}$ there is a set of \Tpaths $\mathscr{P}$ with $|\mathscr{P}| \geq |\mathscr{P'}|$.
\end{romanenumerate}
\end{lemma}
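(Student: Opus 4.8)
The plan is to follow closely the template of the proof of \autoref{lemma:paths-association-variation-1}, adapting it to the extra intermediate layer $V^*$ and the ``compressing'' vertices $b_1, \dots, b_r$ of $G^{\sf T}$. Both directions are again a lift-and-project correspondence between paths in the digraphs $G_i$ and paths in the auxiliary digraph; the only genuinely new ingredient is bookkeeping which bag $B_k$ a path is charged to, and this is exactly what the final edge of an $S' \to B$ path (the one entering $b_k$) records.

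For item \lipItem{(i)}, given a set of \Tpaths $\mathscr{P}$ with defining partition $\mathcal{P}_1, \dots, \mathcal{P}_\ell$, I would first use property \lipItem{(2b)} of \autoref{def:t-paths-and-t-cuts} to fix, for each $P \in \mathscr{P}$, a distinct index $k(P)$ with $\sink(P) \in B_{k(P)}$ (this is possible precisely because $\sink(\mathscr{P})$ is a partial transversal of $\mathcal{B}$). Then, exactly as in \autoref{lemma:paths-association-variation-1}, each $P \in \mathcal{P}_i$ is lifted to the path $Q'_P$ in $G^{\sf T}$ obtained by replacing every vertex by its copy in $V_i$; I extend $Q'_P$ by the edge from $\sink(Q'_P)$ to the original vertex $\sink(P) \in V^*$, and then by the edge from $\sink(P)$ to $b_{k(P)}$, both of which exist by the construction of $G^{\sf T}$. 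Disjointness of the resulting $S' \to B$ paths $\mathscr{P}'$ follows from three observations I would spell out: the $Q'_P$-prefixes are pairwise disjoint (within a part $\mathcal{P}_i$ because the originals are disjoint and copies stay inside $V_i$; across parts because the $V_i$ are disjoint from each other and from $V^*$); the penultimate vertices $\sink(P) \in V^*$ are pairwise distinct by property \lipItem{(1)}; and the endpoints $b_{k(P)}$ are pairwise distinct because $P \mapsto k(P)$ is injective. Hence $|\mathscr{P}'| = |\mathscr{P}|$.

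For item \lipItem{(ii)}, given disjoint $S' \to B$ paths $\mathscr{P}'$ in $G^{\sf T}$, the key structural remark I would establish first is that in $G^{\sf T}$ every vertex of $V^*$ has out-neighbours only among $\{b_1, \dots, b_r\}$, each $b_k$ has no out-neighbours and in-neighbours only in $V^*$, and all edges inside $\bigcup_i V_i$ stay within a single $V_i$; consequently every $P' \in \mathscr{P}'$ has the shape: a prefix inside a single $V_i$ (starting in $S'_i$), then one vertex $w_{P'} \in V^*$, then one vertex $b_{k(P')}$, with $w_{P'} \in B_{k(P')}$. Projecting the $V_i$-prefix of $P'$ back to $G_i$ (discarding $w_{P'}$ and $b_{k(P')}$) yields a path $P$ in $G_i$ with $\sink(P) = w_{P'}$, and partitioning these paths according to $\source(P) \in S_i$ gives a candidate defining partition. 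I would then check the four defining conditions of \Tpaths: conditions (a) and (b) of \autoref{def:graph-source-sequence} are immediate from disjointness of $\mathscr{P}'$ and from the choice of the partition, as in \autoref{lemma:paths-association-variation-1}; property \lipItem{(1)} holds since two projected paths sharing a sink $w$ would force the corresponding paths of $\mathscr{P}'$ to both pass through $w \in V^*$; and property \lipItem{(2b)} holds because $P' \mapsto k(P')$ is injective (two paths hitting the same $b_k$ would again contradict disjointness), so $\sink(\mathscr{P}) = \{w_{P'} : P' \in \mathscr{P}'\}$ is a transversal of the subsequence $(B_{k(P')})_{P' \in \mathscr{P}'}$, hence a partial transversal of $\mathcal{B}$. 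This gives $|\mathscr{P}| = |\mathscr{P}'|$.

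I do not expect a real obstacle: the argument is essentially bookkeeping and is morally identical to that of \autoref{lemma:paths-association-variation-1}. The one point that needs a little care is making the equivalence between the ``partial transversal'' language in \autoref{def:t-paths-and-t-cuts} and the injective labelling of paths by the sink-vertices $b_k$ in $G^{\sf T}$ precise in both directions --- in particular, observing that a family of disjoint $S' \to B$ paths can neither have two paths ending at the same $b_k$ nor share a vertex of $V^*$, which is exactly what simultaneously forces the sinks of the \Tpaths to be distinct and to form a partial transversal.
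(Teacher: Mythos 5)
Your proof is correct and follows essentially the same route as the paper's: both directions project between paths in $G^{\sf T}$ and $(\mathcal{F},\mathcal{S})$-respecting paths, with the partial transversal condition \lipItem{(2b)} translated into an injective assignment of paths to the vertices $b_1,\dots,b_r$ (the paper phrases this as a matching, you as the map $P\mapsto k(P)$). Your write-up is if anything slightly more explicit about the shape of $S'\to B$ paths in $G^{\sf T}$ and about which of the three disjointness reasons applies where, but there is no substantive difference.
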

\begin{proof}
To prove \lipItem{(i)}, let $\mathscr{P} = \{P_1, \ldots, P_q\}$ be a set of \Tpaths and consider a partition $\mathcal{P}_1, \ldots, \mathcal{P}_\ell$ witnessing the defining properties of \Tpaths. 
From each path $P_i$ in $\mathcal{P}_j$ for some $j \in [\ell]$ construct a path $Q_i$ in $G^{\sf T}$ by taking the copies of vertices in $V(P_i)$ appearing in $V_j$.
This ensures that $\source(Q_i) \in S'_j$ whenever $P_i$ is contained in $G_j$, and that the paths $Q_1, \ldots, Q_q$ are disjoint since the paths in each part of $\mathcal{P}_j$ are disjoint.
Now, each $\sink(Q_i)$ is a copy of some vertex $v \in V^*$ and we construct $Q'_i$ appending to $Q_i$ the edge from $\sink(Q_i)$ to $v$ in $G^{\sf T}$.
Observe that \lipItem{(1)} of \autoref{def:t-paths-and-t-cuts} implies that the paths remain disjoint in $G^{\sf T}$.
Finally, for each $i \in [q]$, \lipItem{(2b)} of \autoref{def:t-paths-and-t-cuts} implies that the edges of $G^{\sf T}$ of the form $(\sink(Q_i), b_j)$, for each pair $i,j$ such that $\sink(P_i) \in B_j$, contain a matching of size $q$ in $G^{\sf T}$.
We construct the paths $P'_i$ by appropriately appending to each $Q_i$ the edges of this matching, set $\mathscr{P'} = \{P'_1, \ldots, P'_q\}$, and \lipItem{(i)} follows.

To prove \lipItem{(ii)}, let $\mathscr{P}' = \{P'_1, \ldots, P'_q\}$ be a set of disjoint $S'\to B$ paths in $G^{\sf T}$.
For each $i \in [q]$ we construct the path $P_i$ by taking the vertices of $V(G_j)$ whose copies form $V(P'_i) \cap V_j$ and set $\mathscr{P} = \{P_1, \ldots, P_q\}$.
For $i \in [\ell]$ let $\mathcal{P}_i = \{P \in \mathscr{P} \mid \source(P) \in S_i\}$.
Naturally \lipItem{(a)} and \lipItem{(b)} of \autoref{def:graph-source-sequence} hold with relation to $\mathscr{P}$ by the disjointness of paths in $\mathscr{P}'$ and since each $P'_i$ is an $S' \to B$ path.
Notice that no two paths in $\mathscr{P}'$ can share a vertex in $B$.
This is true because those paths are disjoint, and thus they do not intersect in $V^*$, and they all end in $B$.
Now, since each $P'_i$ ends in a distinct $b_j \in B$ and $B$ induces an independent set in $G^{\sf T}$, the choice of the edges from the sets $V_j$ to $V^*$ in $G^{\sf T}$ implies that the set $X \subseteq V^*$ whose copies form $\{\sink(P_1), \ldots, \sink(P_q)\}$ is a partial transversal of $\mathcal{B}$.
Together with the disjointness of paths of $\mathscr{P}'$, this implies that \lipItem{(1)} and \lipItem{(2b)} of \autoref{def:t-paths-and-t-cuts} hold with relation to $\mathscr{P}$, and \lipItem{(ii)} follows.
\end{proof}

\begin{lemma}\label{lemma:min-separator-variation-2}
Given $\mathcal{F, S}$, and $\mathcal{B}$, the minimum size of an $(S', B)$-separator in $G^{\sf T}$ is equal to $f^{\sf T}(\mathcal{F, S, B})$.
\end{lemma}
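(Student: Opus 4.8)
The plan is to prove the two inequalities between the minimum size of an $(S', B)$-separator in $G^{\sf T}$ and $f^{\sf T}(\mathcal{F, S, B})$ separately, each time reducing to \autoref{lemma:min-separator-variation-1}. Write $V = \bigcup_{i=1}^{\ell} V_i$ and $V^* = \cupall\mathcal{B}$, so that $V(G^{\sf T})$ splits into the pairwise disjoint sets $V$, $V^*$, and $B = \{b_1,\dots,b_r\}$. The key structural observation is that for any subfamily $\mathcal{B}' \subseteq \mathcal{B}$, the induced subdigraph $G^{\sf T}[V \cup \cupall\mathcal{B}']$ is, via the identity on vertices (which also fixes $S'$), exactly the digraph $G^{\sf D}$ one would build from $\mathcal{F}, \mathcal{S}$ and the set $\cupall\mathcal{B}'$ in the construction preceding \autoref{lemma:min-separator-variation-1}; in particular the vertices of $\cupall\mathcal{B}'$ remain sinks inside this subdigraph. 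Hence \autoref{lemma:min-separator-variation-1} gives that the minimum size of an $(S', \cupall\mathcal{B}')$-separator in $G^{\sf T}[V \cup \cupall\mathcal{B}']$ equals $f^{\sf D}(\mathcal{F, S}, \cupall\mathcal{B}')$. I will also use the routine fact that every $S' \to B$ path $P$ in $G^{\sf T}$ has the shape $s, \dots, u_i, v, b_j$, where the prefix up to $u_i$ lies inside a single $V_i$, then one edge leads to a vertex $v \in V^*$ (of which $u_i$ is a copy), and then one edge leads to some $b_j$ with $v \in B_j$.

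For the inequality ``$\leq$'', I would take $\mathcal{B}'$ attaining the minimum in \autoref{eq:2}, apply \autoref{lemma:min-separator-variation-1} to obtain a minimum $(S', \cupall\mathcal{B}')$-separator $Y$ of $G^{\sf T}[V \cup \cupall\mathcal{B}']$ with $|Y| = f^{\sf D}(\mathcal{F, S}, \cupall\mathcal{B}')$, and set $X = Y \cup \{b_j \mid B_j \in \mathcal{B} \setminus \mathcal{B}'\}$. To see $X$ is an $(S', B)$-separator in $G^{\sf T}$, consider an $S' \to B$ path $P$ avoiding $X$; its last vertex $b_j$ satisfies $b_j \notin X$, so $B_j \in \mathcal{B}'$ and the vertex $v$ of $P$ just before $b_j$ lies in $\cupall\mathcal{B}'$, whence the prefix of $P$ ending at $v$ is an $S' \to \cupall\mathcal{B}'$ path in $G^{\sf T}[V \cup \cupall\mathcal{B}']$ avoiding $Y$ — a contradiction. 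Since $B$ is disjoint from $V \cup \cupall\mathcal{B}'$, the two parts of $X$ are disjoint and $|X| = |Y| + |\mathcal{B} \setminus \mathcal{B}'| = f^{\sf T}(\mathcal{F, S, B})$.

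For ``$\geq$'', I would take a minimum $(S', B)$-separator $X$ of $G^{\sf T}$ and set $\mathcal{B}' = \{B_j \in \mathcal{B} \mid b_j \notin X\}$, so that $|\mathcal{B} \setminus \mathcal{B}'| = |X \cap B|$ (using that $b_1, \dots, b_r$ are pairwise distinct). Then I claim $X'' := X \cap (V \cup \cupall\mathcal{B}')$ is an $(S', \cupall\mathcal{B}')$-separator of $G^{\sf T}[V \cup \cupall\mathcal{B}']$: any $S' \to \cupall\mathcal{B}'$ path $Q$ in that subdigraph avoiding $X''$ ends at some $v \in B_j$ with $B_j \in \mathcal{B}'$, i.e.\ $b_j \notin X$, and appending the edge $(v, b_j)$ yields an $S' \to B$ path of $G^{\sf T}$ avoiding $X$, a contradiction. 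Therefore, since $X \cap B$ and $X''$ are disjoint subsets of $X$, we get $|X| \geq |X \cap B| + |X''| \geq |\mathcal{B} \setminus \mathcal{B}'| + f^{\sf D}(\mathcal{F, S}, \cupall\mathcal{B}') \geq f^{\sf T}(\mathcal{F, S, B})$, where the last inequality is just the definition \autoref{eq:2} of $f^{\sf T}$. Combining the two bounds gives the equality.

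I expect the whole argument to be essentially bookkeeping once the reduction is set up; the only point that genuinely needs care is the structural identification $G^{\sf T}[V \cup \cupall\mathcal{B}'] \cong G^{\sf D}$ together with the verification that the vertices of $\cupall\mathcal{B}'$ are still sinks there, so that no $S' \to \cupall\mathcal{B}'$ path can ``escape'' through $V^* \setminus \cupall\mathcal{B}'$ or back into the $V_i$'s, and the clean decomposition of $S' \to B$ paths that lets one add or strip off the last edge $(v, b_j)$. The accounting works out exactly because $V$, $V^*$, and $B$ are pairwise disjoint, so nothing is double-counted and the budget splits precisely as $|\mathcal{B} \setminus \mathcal{B}'| + f^{\sf D}(\mathcal{F, S}, \cupall\mathcal{B}')$.
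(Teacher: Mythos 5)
Your proof is correct and follows essentially the same approach as the paper's: both directions reduce to the \Dpaths/\Dcut{} machinery by exploiting that $G^{\sf T}$ is $G^{\sf D}$ with an extra layer $B$ glued on, trading each discarded element of $\mathcal{B}$ for one vertex $b_j$ in the separator. One minor difference: in the ``$\geq$'' direction the paper argues by contradiction and needs that $X \setminus B$ is a \emph{minimum} $(S',T)$-separator, whereas your direct chain $|X| \geq |X\cap B| + |X''| \geq |\mathcal{B}\setminus\mathcal{B}'| + f^{\sf D}(\mathcal{F,S},\cupall\mathcal{B}') \geq f^{\sf T}(\mathcal{F,S,B})$ only needs $X''$ to be \emph{some} separator and is slightly cleaner.
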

\begin{proof}
Let $X$ be a minimum $(S', B)$-separator in $G^{\sf T}$.
For any $B' \subseteq B$, let $\mathcal{B}(B')$ contain exactly the sets $A \in \mathcal{B}$ such that $A \subseteq N^-_{G^{\sf T}}(v)$ for some $v \in B'$ (notice that this choice implies that $|B \setminus B'| = |\mathcal{B} \setminus \mathcal{B}(B')|$).
Define $T(B') = \cupall\mathcal{B}(B')$.

By \autoref{theo:min-max-statement-var-1} we know that any minimum \Dcut $\mathcal{X}(B')$ for $(\mathcal{F, S}, T(B'))$ has order equal to $f^{\sf D}(\mathcal{F,S}, T(B'))$.
Since $(B \setminus B') \cup (\cupall \mathcal{X}(B'))$ is an $(S', B)$-separator in $G^{\sf T}$, the choice of $X$ implies that, for any $B' \subseteq B$,
\begin{align*}
|X| &\leq |B \setminus B'| + \order(X(B'))\\
 &= |\mathcal{B} \setminus \mathcal{B}(B')| + \order(X(B'))\\
 &= |\mathcal{B} \setminus \mathcal{B}(B')| + f^{\sf D}(\mathcal{F, S}, T(B')),
\end{align*}
and hence we conclude that $|X| \leq f^{\sf T}(\mathcal{F, S, B})$.

By contradiction, assume now that $|X| < f^{\sf T}(\mathcal{F, S, B})$.
Define $B^* = B \setminus X$, let $\mathcal{B}^*$ contain all $A \in \mathcal{B}$ such that $A \subseteq N^-_{G^{\sf T}}(v)$ for some $v \in B^*$, and $T = \cupall\mathcal{B^*}$.
This choice immediately implies that $|B \setminus B^*| = |\mathcal{B} \setminus \mathcal{B}^*|$.
Consider now \Dpaths/\Dcut{s} with respect to $\mathcal{F, S}$, and $T$.
Since $X$ is a minimum $(S', B)$-separator, we know that $X \setminus B$ is a minimum $(S', T)$-separator, and thus by \autoref{lemma:min-separator-variation-1} we get $|X \setminus B| = f^{\sf D}(\mathcal{F, S}, T)$.
By our assumption that $|X| < f^{\sf T}(\mathcal{F, S, B})$ we get that

\begin{align*}
f^{\sf T}(\mathcal{F, S, B}) &> |X| = |X\cap B| + |X \setminus B| = |B \setminus B^*| + |X \setminus B|\\
&= |\mathcal{B} \setminus \mathcal{B}^*| + |X \setminus B|\\
&= |\mathcal{B} \setminus \mathcal{B}^*| + f^{\sf D}(\mathcal{F, S}, T),
\end{align*}
contradicting the minimality of $f^{\sf T}(\mathcal{F, S, B})$ and thus we conclude that $|X| \geq f^{\sf T}(\mathcal{F, S, B})$, and the result follows.
\end{proof}

\begin{theorem}\label{theorem:min-max-variation-2}
Given $\mathcal{F}$, $\mathcal{S}$, and $\mathcal{B}$, the maximum size of a set of \Tpaths is equal to $f^{\sf T}(\mathcal{F}, \mathcal{S}, \mathcal{B})$.
\end{theorem}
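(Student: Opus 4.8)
The plan is to mirror, almost verbatim, the proof of \autoref{theorem:min-max-variation-1}, simply replacing the auxiliary digraph $G^{\sf D}$ by $G^{\sf T}$ and the two bridging lemmas by their \textsf{T}-analogues. First I would apply \autoref{thm:Menger} to $G^{\sf T}$ with source set $S'$ and sink set $B = \{b_1,\ldots,b_r\}$: the maximum number of pairwise disjoint $S' \to B$ paths in $G^{\sf T}$ equals the minimum size of an $(S',B)$-separator in $G^{\sf T}$, and by \autoref{lemma:min-separator-variation-2} this common value is exactly $f^{\sf T}(\mathcal{F, S, B})$.

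For the inequality ``maximum number of \Tpaths $\le f^{\sf T}(\mathcal{F, S, B})$'', I would take a maximum collection $\mathscr{P}$ of \Tpaths and apply item~\lipItem{(i)} of \autoref{lemma:paths-association-variation-2} to obtain a collection $\mathscr{P}'$ of disjoint $S' \to B$ paths in $G^{\sf T}$ with $|\mathscr{P}'| \ge |\mathscr{P}|$; chaining with the Menger equality gives $|\mathscr{P}| \le |\mathscr{P}'| \le f^{\sf T}(\mathcal{F, S, B})$. For the reverse inequality, I would take a maximum collection $\mathscr{P}'$ of disjoint $S' \to B$ paths in $G^{\sf T}$, which by Menger together with \autoref{lemma:min-separator-variation-2} has size exactly $f^{\sf T}(\mathcal{F, S, B})$, and apply item~\lipItem{(ii)} of \autoref{lemma:paths-association-variation-2} to convert it into a collection $\mathscr{P}$ of \Tpaths with $|\mathscr{P}| \ge |\mathscr{P}'| = f^{\sf T}(\mathcal{F, S, B})$. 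Combining the two inequalities yields the claimed equality. Since (as argued after \autoref{eq:2}) the minimum order of a \Tcut equals $f^{\sf T}(\mathcal{F, S, B})$ as well, this simultaneously establishes the min-max part of \autoref{theo:min-max-statement-var-2}, with the running-time bound following from $|V(G^{\sf T})| = \Ocal(\ell\cdot n^* + |\mathcal{B}|)$ and the $\Ocal(|V|^2)$ guarantee of \autoref{thm:Menger}.

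I expect the theorem itself to be a two-line bookkeeping step once this scaffolding is in place; the genuine work lies in the lemmas it invokes. The most delicate ingredient is \autoref{lemma:min-separator-variation-2}, whose proof must recursively exploit the already-established \Dpaths duality (\autoref{theo:min-max-statement-var-1}, applied to $\mathcal{F}$, $\mathcal{S}$, and $\cupall \mathcal{B}'$ for various $\mathcal{B}' \subseteq \mathcal{B}$) and must leverage the precise design of the ``compression'' gadget in $G^{\sf T}$ — the vertices $b_i$ whose in-neighborhood is exactly $B_i$ — so that the sinks of any family of paths reaching $B$ are forced to be a \emph{partial transversal} of $\mathcal{B}$ and not merely a set of distinct vertices. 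The companion subtlety, in item~\lipItem{(i)} of \autoref{lemma:paths-association-variation-2}, is that turning a set of \Tpaths into disjoint paths of $G^{\sf T}$ requires condition~\lipItem{(2b)} to guarantee that the final edges $(\sink(Q_i), b_j)$ can be selected to form a matching of the correct size; this is precisely where the transversal structure is consumed, and it is the only point in the argument that genuinely distinguishes \Tpaths from \Dpaths.
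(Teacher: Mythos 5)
Your proposal reproduces the paper's proof exactly: both directions chain Menger's Theorem on $G^{\sf T}$ with items \lipItem{(i)} and \lipItem{(ii)} of \autoref{lemma:paths-association-variation-2} and with \autoref{lemma:min-separator-variation-2}, and the closing remarks about the \Tcut order and the running time match the paper's discussion after \autoref{eq:2}. There is no material difference in approach, only a slightly more careful phrasing on your part (you avoid calling $\mathscr{P}'$ maximum in the first direction, which the paper asserts a bit loosely but harmlessly).
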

\begin{proof}
Let $\mathscr{P}$ be a maximum set of \Tpaths.
By item \lipItem{(i)} of \autoref{lemma:paths-association-variation-2} there is a maximum set $\mathscr{P}'$ of disjoint $S' \to B$ paths in $G^{\sf T}$ with $|\mathscr{P}'| \geq |\mathscr{P}|$ and by \autoref{thm:Menger} it holds that $|\mathscr{P}'| = |X|$ where $X$ is a minimum $(S', B)$-separator in $G^{\sf T}$.
By \autoref{lemma:min-separator-variation-2} we know that $|X| = f^{\sf T}(\mathcal{F, S, B})$ and hence $|\mathscr{P}| \leq |\mathscr{P}'| = |X| = f^{\sf T}(\mathcal{F, S, B})$.
Similarly, applying \autoref{thm:Menger}, item \lipItem{(ii)} of \autoref{lemma:paths-association-variation-2}, and \autoref{lemma:min-separator-variation-2} we conclude that any maximum set of \Tpaths has size bounded from below by $f^{\sf T}(\mathcal{F, S, B})$ and the result follows.
\end{proof}

As mentioned in the paragraph after \autoref{eq:2}, by choosing $\mathcal{B}' \subseteq \mathcal{B}$ minimizing the order of a \Tcut $(\mathcal{B'}, X)$, it clearly holds that the minimum order of a \Tcut is equal to $f^{\sf T}(\mathcal{F, S, B})$.
Thus, applying \cref{thm:Menger,theorem:min-max-variation-2} and observing the bound on the running time of \autoref{thm:Menger}, which we apply to $G^{\sf T}$, we immediately obtain \autoref{theo:min-max-statement-var-2}.

\medskip
\noindent\textbf{Proof for \Rpaths and \Rcut{s}.}
In this part, we assume again that $\mathcal{B} = \{B_1, \ldots, B_r\}$ is a family of subsets of $\bigcup_{i = 1}^{\ell}V(G_i)$.
Start with $G^{\sf R} = G(\mathcal{F})$, add to $G^{\sf R}$ a set of vertices $\{b_1, \ldots, b_r\}$ and, for each $u \in \bigcup_{i = 1}^{\ell}V(G_i)$ and $j \in [r]$, add an edge from each copy of $u$ to $b_j$ in $G^{\sf R}$ if $u \in B_j$.
Define $S'_i = \{s_i \mid s \in S_i\}$ and $S' = \bigcup_{i = 1}^{\ell}S'_i$.
See \autoref{fig:construction-R-paths} for an illustration of this construction.

\begin{figure}[h]
\centering
\scalebox{.8}{
\begin{tikzpicture}
\node[draw, rectangle, minimum width=3cm, minimum height=1.5cm, label=180:{$V_1$}] (d1) at (0,0) {};
\node[blackvertex, label=180:{$s_1$}] (s1) at ($(d1.west) + (.75, 0)$) {};
\node[draw, rectangle, minimum width=3cm, minimum height=1.5cm, label=180:{$V_2$}] (d2) at (0,1.5) {};
\node[blackvertex, label=180:{$s_2$}] (s2) at ($(d2.west) + (.75, 0)$) {};
\node[draw, rectangle, minimum width=3cm, minimum height=1.5cm, label=180:{$V_3$}] (d3) at (0,3) {};
\node[blackvertex, label=180:{$s_3$}] (s3) at ($(d3.west) + (.75, 0)$) {};

\node[draw, rectangle, label=90:{$B$}, minimum width=1cm, minimum height = 3cm] (B) at (3,1.5) {};


\node[blackvertex, label=-90:{$u_1$}] (u1) at ($(d1.east) + (-.5, 0)$) {};

\node[blackvertex, label=-90:{$u_2$}] (u2) at ($(d2.east) + (-.5, -0.25)$) {};
\node[blackvertex, label=90:{$v_2$}] (v2) at ($(d2.east) + (-.5, 0.25)$) {};

\node[blackvertex, label=90:{$v_3$}] (v3) at ($(d3.east) + (-.5, 0)$) {};

\node[blackvertex, label=0:{$b_1$}] (b1) at ($(B) + (-0.25, -.75)$) {};
\node[blackvertex, label=0:{$b_2$}] (b2) at ($(B) + (-0.25, 0)$) {};
\node[blackvertex, label=0:{$b_3$}] (b3) at ($(B) + (-0.25, .75)$) {};


\draw[arrow] (s1) -- (u1);

\draw[arrow] (s2) -- (u2);
\draw[arrow] (s2) -- (v2);

\draw[arrow] (s3) -- (v3);




\draw[arrow] (u1) -- (b1);
\draw[arrow] (u2) -- (b1);
\draw[arrow] (u1) -- (b2);
\draw[arrow] (u2) -- (b2);
\draw[arrow] (v2) -- (b2);
\draw[arrow] (v3) -- (b2);
\draw[arrow] (v2) -- (b3);
\draw[arrow] (v3) -- (b3);

\begin{scope}[xshift=-6cm]
\node[blackvertex, label=180:{$s_1$}] (s1) at (0,0) {};
\node[blackvertex, label=180:{$s_2$}] (s2) at (0,1.5) {};
\node[blackvertex, label=180:{$s_3$}] (s3) at (0,3) {};

\node[blackvertex, label=0:{$u$}] (u) at ($(s2) + (2,-.75)$) {};
\node[blackvertex, label=0:{$v$}] (v) at ($(s2) + (2,.75)$) {};
\node[draw, ellipse, fit = (s3)(v), rotate = -20, label=90:{$G_3$}] {};
\node[draw, ellipse, fit = (s1)(u), rotate = 20, label=-90:{$G_1$}] {};
\node[draw, rectangle, fit = (s2)(v)(u), inner xsep = 17pt, inner ysep=5pt, label=180:{$G_2$}] {};
\draw[arrow] (s1) -- (u);
\draw[arrow] (s2) -- (u);
\draw[arrow] (s2) -- (v);
\draw[arrow] (s3) -- (v);
\end{scope}
\end{tikzpicture}}%
\caption{On the left, example graphs $G_1$, $G_2$, and $G_3$. On the right, the resulting construction of $G^{\sf R}$ with $B_1 = \{u\}, B_2 = \{u,v\}$, and $B_3 = \{v\}$.}
\label{fig:construction-R-paths}
\end{figure}
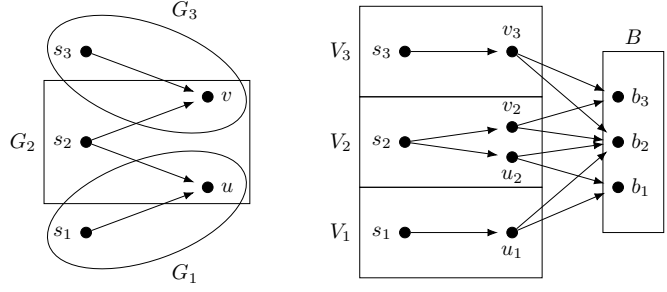

For $i \in [\ell]$ and $\mathcal{B}' \subseteq \mathcal{B}$ let $x_i(\mathcal{B}')$ be the size of a minimum $(S_i, \cupall\mathcal{B'})$-separator in $G_i$, and
\begin{equation}\label{eq:min-max-formula-variation-3}
f^{\sf R}(\mathcal{F, S, B}) = \min_{\mathcal{B}' \subseteq \mathcal{B}}\left\{|\mathcal{B} \setminus \mathcal{B}'| + \sum_{i=1}^{\ell} x_i(\mathcal{B}') \right\}.
\end{equation}

Similarly to \Tcut{s} the definition of $f^{\sf R}(\mathcal{F, S, B})$ follows the shape of \Rcut{s}.
As it was the case with \Tcut{s}, we cannot construct \Rcut{s} by paying one unit for each vertex of $\bigcup_{i=1}^{k}V(G_i)$ that we want to include in the cut, since its possible that some $A \in \mathcal{B}$ contains every vertex of every digraph in $\mathcal{F}$.
Thus, we are again allowed to choose a $\mathcal{B'} \subseteq \mathcal{B}$ and pay one unit for each $A \in \mathcal{B}\setminus \mathcal{B}$ that we ignore as destinations for the \Rpaths (i.e., the value $|\mathcal{B} \setminus \mathcal{B}'|$).
Then, for each $i \in [k]$ we pay the minimum cost to separate $S_i$ from every vertex in $\cupall\mathcal{B}'$ (i.e., the value $\sum_{i=1}^{k}x_i(\mathcal{B}')$).

As in the previous cases, the above discussion and a simple observation that every $\mathcal{B}'\subseteq \mathcal{B}$ witnessing the value of $f^{\sf R}(\mathcal{F, S, B})$ can be used to construct an \Rcut with this order implies that the minimum order of an \Rcut is equal to $f^{\sf R}(\mathcal{F, S, B})$.
Following the proofs for \Dpaths/\Dcut{s} and \Tpaths/\Tcut{s}, to obtain the desired min-max formula we show that the maximum number of \Tpaths is equal to $f^{\sf R}(\mathcal{F, S, B})$.
We first show how to associate paths in $G^{\sf R}$ with \Rpaths, and the size of $(S', B)$-separators in $G^{\sf R}$ with $f^{\sf R}(\mathcal{F, S}, B)$.

\begin{lemma}\label{lemma:paths-association-variation-3}
Given $\mathcal{F, S}$, and $\mathcal{B}$, it holds that
\begin{romanenumerate}
  \item for any set of \Rpaths $\mathscr{P}$ there is a set $\mathscr{P'}$ of disjoint $S' \to B$ paths in $G^{\sf R}$ with $|\mathscr{P'}| \geq |\mathscr{P}|$, and
  \item for any set $\mathscr{P'}$ of disjoint $S' \to B$ paths in $G^{\sf R}$ there is a set of \Rpaths $\mathscr{P}$ with $|\mathscr{P}| \geq |\mathscr{P'}|$.
\end{romanenumerate}
\end{lemma}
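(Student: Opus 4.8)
The plan is to follow, essentially verbatim, the strategy of \autoref{lemma:paths-association-variation-1} and \autoref{lemma:paths-association-variation-2}, adapted to the construction of $G^{\sf R}$. Two features of $G^{\sf R}$ are what make the argument go through: first, the vertex sets $V_1, \ldots, V_\ell$ are pairwise disjoint copies of $G_1, \ldots, G_\ell$, so any two paths living in different copies are automatically vertex-disjoint; second, the added vertices $B = \{b_1, \ldots, b_r\}$ are sinks (no edge leaves a $b_j$ and no edge joins two of them), each $b_j$ being reachable in one step only from the copies of the vertices of $B_j$. Note also that $S' = \bigcup_{i=1}^{\ell} S'_i$ with $S'_i \subseteq V_i$, so the sets $S'_i$ are pairwise disjoint.

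For \lipItem{(i)}, I would take a set of \Rpaths $\mathscr{P}$ with defining partition $\mathcal{P}_1, \ldots, \mathcal{P}_\ell$ and a bijection $h\colon \mathscr{P} \to \mathcal{B}^*$ witnessing \lipItem{(1c)}. For each $P \in \mathcal{P}_j$, lift $P$ to the path $Q$ in $G^{\sf R}$ obtained by replacing every vertex of $P$ by its copy in $V_j$; this guarantees $\source(Q) \in S'_j$. Writing $h(P) = B_m$, we have $\sink(P) \in B_m$, hence the copy of $\sink(P)$ in $V_j$ has an edge to $b_m$ in $G^{\sf R}$; append it to $Q$ to obtain $P'$, and let $\mathscr{P}'$ be the resulting collection. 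Lifts of paths from a common part $\mathcal{P}_j$ are disjoint because the paths of $\mathcal{P}_j$ are disjoint in $G_j$ (property \lipItem{(a)} of \autoref{def:graph-source-sequence}); lifts of paths from different parts $\mathcal{P}_i, \mathcal{P}_j$ are disjoint because they lie in the disjoint sets $V_i, V_j$; and since $h$ is a bijection, the appended edges reach pairwise distinct vertices $b_m$, none of which belongs to any $V_i$. Hence $\mathscr{P}'$ is a set of disjoint $S' \to B$ paths in $G^{\sf R}$ with $|\mathscr{P}'| = |\mathscr{P}|$.

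For \lipItem{(ii)}, I would take disjoint $S' \to B$ paths $\mathscr{P}' = \{P'_1, \ldots, P'_q\}$ in $G^{\sf R}$. Since $\{b_1, \ldots, b_r\}$ induces no edges and the only edges leaving a set $V_k$ go to some $b_m$, each $P'_i$ consists of a path contained in a single $V_{k_i}$ followed by exactly one edge to a vertex $b_{m_i} \in B$; disjointness of $\mathscr{P}'$ forces the $b_{m_i}$ to be pairwise distinct. Project $P'_i$ back to the path $P_i$ in $G_{k_i}$ formed by the vertices of $V(G_{k_i})$ whose copies lie in $V(P'_i) \cap V_{k_i}$, set $\mathscr{P} = \{P_1, \ldots, P_q\}$, and partition $\mathscr{P}$ by placing $P_i$ in the part indexed by $k_i$ (well-defined because $\source(P'_i)$ lies in exactly one $S'_{k_i}$). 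Then \lipItem{(a)} and \lipItem{(b)} of \autoref{def:graph-source-sequence} hold for $\mathscr{P}$ by disjointness of $\mathscr{P}'$ and by the choice of parts. Finally, taking the subfamily $\mathcal{B}^* = (B_j \mid j \in \{m_1, \ldots, m_q\})$, which has exactly $q$ elements since the indices $m_i$ are distinct, and setting $h(P_i) = B_{m_i}$ gives a bijection $\mathscr{P} \to \mathcal{B}^*$; by the construction of the edges entering $b_{m_i}$ we get $\sink(P_i) \in B_{m_i} = h(P_i)$, so \lipItem{(1c)} holds and $\mathscr{P}$ is a set of \Rpaths with $|\mathscr{P}| = |\mathscr{P}'|$.

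The step needing the most care — though it is arguably the cleanest part — is the bookkeeping around \lipItem{(1c)}: unlike the \Tpaths case, an \Rpaths collection may contain two paths from different parts sharing their last vertex, and it is exactly the bijection $h$ that makes this harmless, since it spreads such paths over distinct sink vertices $b_m$ in $G^{\sf R}$. Once this lemma is in place, the rest of the \Rpaths/\Rcut{s} duality will mirror the \Dpaths and \Tpaths cases: one proves that a minimum $(S', B)$-separator in $G^{\sf R}$ has size $f^{\sf R}(\mathcal{F, S, B})$ by the same cut-decomposition argument as in \autoref{lemma:min-separator-variation-1} (splitting a separator into its trace on each $V_i$ and its trace on $B$, the latter picking out $\mathcal{B} \setminus \mathcal{B}'$), and then combines this with Menger's Theorem (\autoref{thm:Menger}) applied to $G^{\sf R}$ to obtain \autoref{theo:min-max-statement-var-3}, the claimed running time following from $|V(G^{\sf R})| = \ell \cdot n^* + |\mathcal{B}|$.
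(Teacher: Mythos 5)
Your proof is correct and follows essentially the same approach as the paper's: lift each path $P \in \mathcal{P}_j$ into the copy $V_j$, use the bijection $h$ to append the edge to the appropriate $b_m$, and in the reverse direction project back, partition by the source copy, and define $h$ from the distinct terminal vertices $b_{m_i}$. Your handling of the partition indexing in (ii) (placing $P_i$ in the part $k_i$ determined by $\source(P'_i) \in S'_{k_i}$) is a small clarification of the paper's phrasing, but the argument is the same.
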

\begin{proof}
To prove \lipItem{(i)}, let $\mathscr{P} = \{P_1, \ldots, P_q\}$ be a set of \Rpaths and consider a partition $\mathcal{P}_1, \ldots, \mathcal{P}_\ell$ witnessing the defining properties of \Rpaths. 
From each path $P_i$ in $\mathcal{P}_j$ for some $j \in [\ell]$ construct a path $Q_i$ in $G^{\sf R}$ by taking the copies of vertices in $V(P_i)$ appearing in $V_j$.
This ensures that $\source(Q_i) \in S'_j$ whenever $P_i$ is contained in $G_j$, and that the paths $Q_1, \ldots, Q_q$ are disjoint since the paths in each part of $\mathcal{P}_j$ are disjoint.
Now, let $\mathcal{B}^* \subseteq \mathcal{B}$ and $h : \mathscr{P} \to \mathcal{B}^*$ be a bijective mapping as in \lipItem{(1c)} of \autoref{def:r-paths-and-r-cuts}.
We construct the paths $P'_i$ appending to each $Q_i$ the edge in $G^R$ from $\sink(P_i)$ to the vertex $b \in B$ that is associated with the bag $h(P_i) \in \mathcal{B}^*$.
This edge is guaranteed to exist because $\sink(P_i) \in h(P_i)$.

To prove \lipItem{(ii)}, let $\mathscr{P}' = \{P'_1, \ldots, P'_q\}$ be a set of disjoint $S'\to B$ paths in $G^{\sf R}$.
For each $i \in [q]$ we construct the path $P_i$ by taking the vertices of $V(G_j)$ whose copies form $V(P'_i) \cap V_j$, and set $\mathscr{P} = \{P_1, \ldots, P_q\}$.
For $i \in [\ell]$ let $\mathcal{P}_i = \{P \in \mathscr{P} \mid \source(P) \in S_i\}$.
Naturally \lipItem{(a)} and \lipItem{(b)} of \autoref{def:graph-source-sequence} hold with relation to $\mathscr{P}$ by the disjointness of paths in $\mathscr{P}'$ and since each $P'_i$ is an $S' \to B$ path.
Notice that since those paths are disjoint, no two paths in $\mathscr{P}'$ can share a vertex in $B$.
Now, since each $P'_i$ ends in a distinct $b_j \in B$ and $B$ induces an independent set in $G^{\sf R}$, the choice of the edges from $\bigcup_{j = 1}^{\ell}V_j$ to $B$ in $G^{\sf R}$ implies that the last edge of each path $P'_i$ is of the form $(v, b_j)$ for some $j \in [r]$.
Thus we can construct the desired mapping $h: \mathscr{P} \to \mathcal{B}^*$, with $\mathcal{B}^* \subseteq \mathcal{B}$, by setting for each $i \in [q]$, $h(P_i) = B_j$ if $b_j$ is the last vertex of $P'_i$. Thus  \lipItem{(1c)} of \autoref{def:r-paths-and-r-cuts} holds and the result follows.
\end{proof}

The proof of \autoref{lemma:paths-association-variation-3} is similar to the proof of \autoref{lemma:paths-association-variation-2}.
The only difference is that the set $V^*$ is not present in $G^{\sf R}$ and thus the edges reaching the vertices in $B$ are directly from vertices in $\bigcup_{i = 1}^{\ell} V_i$.
Hence the last vertices of the paths in $\mathscr{P}'$ are not guaranteed to be disjoint.
This is not an issue since in the definition of \Rpaths this property (item \lipItem{(1)} in \cref{def:D-paths-and-D-cuts,def:t-paths-and-t-cuts}) is not required.

\begin{lemma}\label{lemma:min-separator-variation-3}
Given $\mathcal{F, S}$, and $\mathcal{B}$, the minimum size of an $(S', B)$-separator in $G^{\sf R}$ is equal to $f^{\sf R}(\mathcal{F, S}, B)$.
\end{lemma}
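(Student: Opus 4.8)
The plan is to follow the template of \autoref{lemma:min-separator-variation-1}, since $G^{\sf R}$ is obtained from $G(\mathcal{F})$ by attaching the ``bramble vertices'' $b_1,\dots,b_r$ directly, with $b_j$ playing, roughly, the role that the vertices of a single bag $B_j$ would, and with $N^-_{G^{\sf R}}(b_j)$ equal to the set of all copies of vertices of $B_j$. The first thing to record is a structural observation: since the only edges leaving $\bigcup_{i\in[\ell]}V_i$ go into some $b_j$, and since the $b_j$ form a set of sinks inducing no edges among themselves, every $S'\to B$ path in $G^{\sf R}$ is a path inside some $V_i$ starting in $S'_i$, followed by a single edge from a copy $u_i$ of a vertex $u\in B_j$ to $b_j$; conversely such paths are in bijective correspondence with pairs formed by an $S_i\to B_j$ path in $G_i$ and an index $j$ whose bag $B_j$ contains the endpoint of that path. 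I will also use throughout that $V(G^{\sf R})$ is the disjoint union of $\{b_1,\dots,b_r\}$ and the sets $V_i$, $i\in[\ell]$.

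For the inequality $|X|\le f^{\sf R}(\mathcal{F,S,B})$, where $X$ is a minimum $(S',B)$-separator, I would take a family $\mathcal{B}'\subseteq\mathcal{B}$ attaining the minimum in \autoref{eq:min-max-formula-variation-3}, and for each $i\in[\ell]$ a minimum $(S_i,\cupall\mathcal{B}')$-separator $X_i$ in $G_i$, of size $x_i(\mathcal{B}')$. Then the vertex set consisting of $b_j$ for every $B_j\in\mathcal{B}\setminus\mathcal{B}'$, together with, for every $i$, the copies in $V_i$ of the vertices of $X_i$, is an $(S',B)$-separator in $G^{\sf R}$: an $S'\to B$ path ending in $b_j$ with $B_j\notin\mathcal{B}'$ is blocked at $b_j$, whereas one ending in $b_j$ with $B_j\in\mathcal{B}'$ restricts to an $S_i\to\cupall\mathcal{B}'$ path in $G_i$ and is therefore blocked by $X_i$. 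Its size is $|\mathcal{B}\setminus\mathcal{B}'|+\sum_{i\in[\ell]}x_i(\mathcal{B}')=f^{\sf R}(\mathcal{F,S,B})$, so $|X|\le f^{\sf R}(\mathcal{F,S,B})$.

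For the reverse inequality I would argue by contradiction, assuming $|X|<f^{\sf R}(\mathcal{F,S,B})$. Put $\mathcal{B}'=\{B_j\in\mathcal{B}\mid b_j\notin X\}$, so that $|\mathcal{B}\setminus\mathcal{B}'|=|X\cap\{b_1,\dots,b_r\}|$, and for each $i\in[\ell]$ let $X_i=\{u\in V(G_i)\mid u_i\in X\cap V_i\}$, so that $|X_i|=|X\cap V_i|$. The key claim is that $X_i$ is an $(S_i,\cupall\mathcal{B}')$-separator in $G_i$: an $S_i\to\cupall\mathcal{B}'$ path in $G_i\setminus X_i$ would end at a vertex of some bag $B_j$ with $b_j\notin X$, and lifting this path to its copy in $V_i$ — which avoids $X$ exactly because $X_i$ is the preimage of $X\cap V_i$ under the copy bijection — and appending the edge into $b_j$ would produce an $S'\to B$ path in $G^{\sf R}\setminus X$, contradicting the choice of $X$. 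Hence $|X_i|\ge x_i(\mathcal{B}')$, and using the disjoint decomposition of $V(G^{\sf R})$ we get $|X|=|X\cap\{b_1,\dots,b_r\}|+\sum_{i\in[\ell]}|X\cap V_i|=|\mathcal{B}\setminus\mathcal{B}'|+\sum_{i\in[\ell]}|X_i|\ge|\mathcal{B}\setminus\mathcal{B}'|+\sum_{i\in[\ell]}x_i(\mathcal{B}')\ge f^{\sf R}(\mathcal{F,S,B})$, a contradiction. Therefore $|X|=f^{\sf R}(\mathcal{F,S,B})$.

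I expect the argument to be essentially routine; the only point requiring a little care is the interface between paths of $G_i$ and paths of $G^{\sf R}$ at the bramble vertices, namely checking that a path reaching $\cupall\mathcal{B}'$ inside $G_i$ can always be completed by an edge into some $b_j$ with $B_j\in\mathcal{B}'$ — which holds precisely by the definition of $\cupall\mathcal{B}'$ and of the in-neighbourhoods of the $b_j$'s — and, conversely, that the restriction of an $S'\to B$ path to the relevant $V_i$ is a genuine $S_i\to\cupall\mathcal{B}'$ path. As in the \Dpaths and \Tpaths cases, no difficulty arises from paths passing ``through'' the bramble vertices, since these are sinks reachable only from the copies of their own defining bags.
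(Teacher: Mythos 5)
Your proof is correct and follows essentially the same route as the paper's: both directions decompose a minimum separator $X$ along the disjoint parts $\{b_1,\dots,b_r\}$ and $V_1,\dots,V_\ell$ of $V(G^{\sf R})$, and both establish the reverse inequality by the (technically superfluous) device of assuming $|X| < f^{\sf R}$ and deriving $|X| \ge f^{\sf R}$. The only cosmetic difference is in the lower bound: you show directly that each $X_i$ (the preimage of $X \cap V_i$) is an $(S_i,\cupall\mathcal{B}')$-separator in $G_i$, giving $|X_i| \ge x_i(\mathcal{B}')$, whereas the paper first argues that $X\setminus B$ is a \emph{minimum} $(S',B^*)$-separator and from there that $|X\cap V_i| = x_i(\mathcal{B}^*)$ — your version is in fact slightly leaner, since the inequality suffices and avoids having to justify that minimality of a separator in a disjoint union implies minimality in each piece.
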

\begin{proof}
Again, the proof follows similarly to the proof of \autoref{lemma:min-separator-variation-2}.
Let $X$ be a minimum $(S', B)$-separator in $G^{\sf R}$.
For any $B' \subseteq B$ let $\mathcal{B}(B')$ contain exactly the sets $A \in \mathcal{B}$ such that $A \subseteq N^-_{G^{\sf R}}(v)$ for some $v \in B'$.
This immediately implies that $|B \setminus B'| = |\mathcal{B} \setminus \mathcal{B}(B')|$.
Define $T(B') = \cupall\mathcal{B}(B')$, and for $i \in [\ell]$ let $X_i(B')$ be a minimum $(S'_i, T(B'))$-separator in $G^{\sf R}$.
Hence every $X_i(B')$ is contained in $V_i$ (we remind the reader of \autoref{def:disjoint-copies-digraph}).
Clearly $(B\setminus B') \cup (\bigcup_{i=1}^{\ell}X_i(B'))$ is an $(S', B)$-separator in $G^{\sf R}$ since every $S' \to B'$ is intersected by some $X_i(B')$ (notice that no path can have an internal vertex in $B$).
Thus for any $B' \subseteq B$ it holds that
\begin{align*}
|X| &\leq |B \setminus B'| + \left| \bigcup_{i=1}^{\ell}X_i(B') \right|\\
  &= |B \setminus B'| + \sum_{i=1}^{\ell}x_i(\mathcal{B}(B'))\\
  &= |\mathcal{B} \setminus \mathcal{B}(B')| + \sum_{i=1}^{\ell}x_i(\mathcal{B}(B')),
\end{align*}
and we conclude that $|X| \leq f^{\sf R}(\mathcal{F, S, B})$ (the value of $x_i(\mathcal{B}(B'))$ as defined right before \autoref{eq:min-max-formula-variation-3}).

By contradiction, assume now that $|X| < f^{\sf R}(\mathcal{F, S, B})$. 
Define $B^* = B \setminus X$, let $\mathcal{B}^*$ contain all $A \in \mathcal{B}$ such that $A \subseteq N^-_{G^{\sf R}}(v)$ for some $v \in B^*$, and define $T = \cupall\mathcal{B}^*$.
We remark that the choice of $B^*$ immediately implies that $|B \setminus B^*| = |\mathcal{B} \setminus \mathcal{B}^*|$.
Clearly $X \setminus B$ is a minimum $(S', B^*)$-separator in $G^{\sf R}$, for otherwise there would be an $(S', B)$-separator smaller than $X$.
Thus by our assumption that $|X| < f^{\sf R}(\mathcal{F, S, B})$ we obtain
\begin{align*}
f^{\sf R}(\mathcal{F, S, B}) &> |X| = |X \cap B| + |X \setminus B| = |B \setminus B^*| + |X \setminus B| = |\mathcal{B} \setminus \mathcal{B}^*| + |X \setminus B|\\
  &= |\mathcal{B} \setminus \mathcal{B}^*| + \sum_{i=1}^{\ell}|X \cap V_i|\\
  &= |\mathcal{B} \setminus \mathcal{B}^*| + \sum_{i=1}^{\ell}x_i(\mathcal{B}^*),
\end{align*}
by observing that each $x_i(\mathcal{B}^*)$ is equal to the size of a minimum $(S'_i, T)$-separator in $G^{\sf R}$.
Thus we conclude that $|X| \geq f^{\sf R}(\mathcal{F, S, B})$ and the result follows.
\end{proof}

\begin{theorem}\label{theorem:min-max-variation-3}
Given $\mathcal{F}$, $\mathcal{S}$, and $\mathcal{B}$, the maximum size of a set of \Rpaths set is equal to $f^{\sf R}(\mathcal{F}, \mathcal{S}, \mathcal{B})$.
\end{theorem}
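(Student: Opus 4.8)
The plan is to mirror the proof of \autoref{theorem:min-max-variation-1} (and of \autoref{theorem:min-max-variation-2}): apply Menger's Theorem (\autoref{thm:Menger}) to the auxiliary digraph $G^{\sf R}$ with source set $S'$ and sink set $B$, and then sandwich the maximum number of \Rpaths between the two inequalities provided by \cref{lemma:paths-association-variation-3,lemma:min-separator-variation-3}. All of this is completely analogous to the two preceding cases, so the argument is short once those lemmas are in hand.

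First I would take a maximum set $\mathscr{P}$ of \Rpaths. By item \lipItem{(i)} of \autoref{lemma:paths-association-variation-3} there is a set $\mathscr{P'}$ of disjoint $S' \to B$ paths in $G^{\sf R}$ with $|\mathscr{P'}| \geq |\mathscr{P}|$; choosing $\mathscr{P'}$ of maximum size, \autoref{thm:Menger} gives $|\mathscr{P'}| = |X|$ for a minimum $(S', B)$-separator $X$ of $G^{\sf R}$, and \autoref{lemma:min-separator-variation-3} gives $|X| = f^{\sf R}(\mathcal{F, S, B})$. Hence $|\mathscr{P}| \leq |\mathscr{P'}| = |X| = f^{\sf R}(\mathcal{F, S, B})$. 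For the converse, I would start from a maximum set $\mathscr{P'}$ of disjoint $S' \to B$ paths in $G^{\sf R}$, which by \autoref{thm:Menger} together with \autoref{lemma:min-separator-variation-3} has size $f^{\sf R}(\mathcal{F, S, B})$, and then apply item \lipItem{(ii)} of \autoref{lemma:paths-association-variation-3} to obtain a set $\mathscr{P}$ of \Rpaths with $|\mathscr{P}| \geq |\mathscr{P'}| = f^{\sf R}(\mathcal{F, S, B})$. Combining both directions yields that the maximum size of a set of \Rpaths is exactly $f^{\sf R}(\mathcal{F, S, B})$, as claimed.

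I do not expect a genuine obstacle at this level, since the real work has already been relocated to the two lemmas. The only point deserving a little care is the bookkeeping of the edges into the contracted vertices $\{b_1, \dots, b_r\}$ of $G^{\sf R}$: when converting a family of disjoint $S' \to B$ paths back into \Rpaths one must check that the last edges, each of the form $(v, b_j)$ with $v$ a copy of some vertex of $B_j$, induce a well-defined bijection $h : \mathscr{P} \to \mathcal{B}^*$ with $h(P) = B_j$ whenever $b_j$ is the last vertex of the corresponding path in $\mathscr{P'}$, so that condition \lipItem{(1c)} of \autoref{def:r-paths-and-r-cuts} is met. This is exactly item \lipItem{(ii)} of \autoref{lemma:paths-association-variation-3}, which we may invoke; the subtlety compared with \Tpaths is that here the last vertices of the paths in $\mathscr{P'}$ need not be pairwise distinct, which is harmless since item \lipItem{(1)} of \cref{def:D-paths-and-D-cuts,def:t-paths-and-t-cuts} is not required for \Rpaths.

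Finally, as noted in the paragraph following \autoref{eq:min-max-formula-variation-3}, every $\mathcal{B}' \subseteq \mathcal{B}$ witnessing the value of $f^{\sf R}(\mathcal{F, S, B})$ yields an \Rcut of that order, and conversely the order of any \Rcut is at least $f^{\sf R}(\mathcal{F, S, B})$, so the minimum order of an \Rcut equals $f^{\sf R}(\mathcal{F, S, B})$; together with \autoref{theorem:min-max-variation-3} this establishes the min-max equality of \autoref{theo:min-max-statement-var-3}. The stated running time follows from the algorithmic part of \autoref{thm:Menger} applied to $G^{\sf R}$, which has $\ell \cdot n^* + |\mathcal{B}|$ vertices with $n^* = \max_{i \in [\ell]}(|V(G_i)|)$, since a maximum collection of disjoint $S' \to B$ paths and a minimum $(S', B)$-separator in $G^{\sf R}$ translate, via \autoref{lemma:paths-association-variation-3} and \autoref{lemma:min-separator-variation-3}, into a maximum collection of \Rpaths and a minimum-order \Rcut.
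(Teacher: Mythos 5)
Your proof is correct and follows essentially the same approach as the paper: sandwiching the maximum number of \Rpaths between the two directions of \autoref{lemma:paths-association-variation-3}, using \autoref{thm:Menger} on $G^{\sf R}$ together with \autoref{lemma:min-separator-variation-3} to identify the minimum separator size with $f^{\sf R}(\mathcal{F},\mathcal{S},\mathcal{B})$. Your closing remarks on \Rcut{s}, the running time, and the fact that condition \lipItem{(1)} is not required for \Rpaths are also accurate and match the surrounding discussion in the paper.
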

\begin{proof}
Let $\mathscr{P}$ be a maximum set of \Rpaths.
By item \lipItem{(i)} of \autoref{lemma:paths-association-variation-3} there is a maximum set $\mathscr{P}'$ of disjoint $S' \to B$ paths in $G^{\sf R}$ with $|\mathscr{P}'| \geq |\mathscr{P}|$ and by \autoref{thm:Menger} it holds that $|\mathscr{P}'| = |X|$ where $X$ is a minimum $(S', B)$-separator in $G^{\sf R}$.
By \autoref{lemma:min-separator-variation-3} we know that $|X| = f^{\sf R}(\mathcal{F, S, B})$ and hence $|\mathscr{P}| \leq |\mathscr{P}'| = |X| = f^{\sf R}(\mathcal{F, S, B})$.
Similarly, applying \autoref{thm:Menger}, item \lipItem{(ii)} of \autoref{lemma:paths-association-variation-3}, and \autoref{lemma:min-separator-variation-3} we conclude that any maximum set of \Rpaths has size bounded from below by $f^{\sf R}(\mathcal{F, S, B})$, and the result follows.
\end{proof}

Finally, applying \cref{thm:Menger,theorem:min-max-variation-3} and since the minimum order of an \Rcut is equal to $f^{\sf R}(\mathcal{F, S, B})$, we obtain \autoref{theo:min-max-statement-var-3}.
The running time follows from applying \autoref{thm:Menger} to  $G^{\sf R}$.

\section{Applications}\label{section:DDP-algorithm}

In this section we show how to exploit the duality between \Rpaths and \Rcut{s} to improve on results by Edwards et al.~\cite{Edwards2017} and Giannopoulou et al.~\cite{GiannopoulouKKK22}.
The following is the main result that we prove, and then we use it to improve on results by~\cite{Edwards2017,GiannopoulouKKK22}.
\begin{theorem}\label{theorem:solution_or_small_separator}
Let $k,c$ be integers with $k,c \geq 2$ and $g(k,c) = 2k(c\cdot k - c + 2) + c(k-1)$.
Let $G$ be a digraph, assume that we are given the bags of a bramble $\mathcal{B}$ of congestion $c$ and size at least $g(k,c)$, and $S,T \subseteq V(G)$ with $S = \{s_1, \ldots, s_k\}$ and $T = \{t_1, \ldots, t_k\}$.
Then in time $\Ocal(k^4 \cdot n^2)$ one can either
\begin{enumerate}
  \item find a $\mathcal{B}^* \subseteq \mathcal{B}$ with $|\mathcal{B}^*| \geq g(k,c) - c(k-1)$ and an $(S, \cupall{\mathcal{B}^*})$-separator $X_S$ with $|X_S| \leq k-1$ that is disjoint from all bags of $\mathcal{B}^*$, or
  \item find a $\mathcal{B}^* \subseteq \mathcal{B}$ with $|\mathcal{B}^*| \geq g(k,c) - c(k-1)$ and an $(\cupall{\mathcal{B}^*}, T)$-separator $X_T$ with $|X_T| \leq k-1$ that is disjoint from all bags of $\mathcal{B}^*$, or
  \item find a set of paths $\{P_1, \ldots, P_k\}$ in $G$ such that each $P_i$ with $i \in [k]$ is a path from $s_i$ to $t_i$ and each vertex of $G$ appears in at most $c$ of these paths.
\end{enumerate}
\end{theorem}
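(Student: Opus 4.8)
The plan is to use the \Rpaths/\Rcut{s} duality (\autoref{theo:min-max-statement-var-3}) twice: once to route from $S$ to the bags of $\mathcal{B}$, and once, in the digraph $G^{\sf rev}$ obtained from $G$ by reversing all of its edges, to route from the bags of $\mathcal{B}$ to $T$. For the first call I take the digraph-source sequence of size $\ell=1$ with $G_1=G$ and $S_1=S$, together with the family $\mathcal{B}$; \autoref{theo:min-max-statement-var-3} then returns, in time $\Ocal((n+|\mathcal{B}|)^2)$, either a set of $k$ \Rpaths (the maximum cannot exceed $|S|=k$, as the paths of a single part are disjoint) or an \Rcut of order at most $k-1$. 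The second call is the same with $G_1=G^{\sf rev}$ and $S_1=T$; note that $\mathcal{B}$ is also a bramble of congestion $c$ of $G^{\sf rev}$, and that a path of $G^{\sf rev}$ from some $t_i$ to a bag reverses to a path of $G$ from a bag to $t_i$.

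First suppose the call for $S$ returns an \Rcut $(\mathcal{B}',(X_1))$ of order at most $k-1$, so $|\mathcal{B}\setminus\mathcal{B}'|+|X_1|\le k-1$ and $X_1$ is an $(S,\cupall\mathcal{B}')$-separator in $G$. Set $X_S:=X_1$ and $\mathcal{B}^*:=\{B\in\mathcal{B}':V(B)\cap X_1=\emptyset\}$. Since $\mathcal{B}$ has congestion $c$, the set $X_1$ meets at most $c\,|X_1|$ bags, whence
\begin{align*}
  |\mathcal{B}^*| &\ge |\mathcal{B}|-|\mathcal{B}\setminus\mathcal{B}'|-c\,|X_1| \\
  &\ge |\mathcal{B}|-c\bigl(|\mathcal{B}\setminus\mathcal{B}'|+|X_1|\bigr) \ge g(k,c)-c(k-1),
\end{align*}
using $c\ge1$. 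Moreover $|X_S|\le k-1$, the set $X_S$ is disjoint from every bag of $\mathcal{B}^*\subseteq\mathcal{B}'$, and it is an $(S,\cupall\mathcal{B}^*)$-separator; this is output~(1). By the symmetric argument on $G^{\sf rev}$, if the call for $T$ returns a small \Rcut we obtain output~(2).

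It remains to treat the case in which both calls return \Rpaths. Keeping $k$ of them on each side and reversing those on the $T$-side gives pairwise disjoint paths $P_1^S,\dots,P_k^S$ and pairwise disjoint paths $P_1^T,\dots,P_k^T$ in $G$ such that, after relabelling, $P_i^S$ runs from $s_i$ to a vertex $a_i$ lying in a bag $B_i^S$, $P_i^T$ runs from a vertex $b_i$ lying in a bag $B_i^T$ to $t_i$, the bags $B_1^S,\dots,B_k^S$ are pairwise distinct, and so are $B_1^T,\dots,B_k^T$ (this is the matching-like property~\lipItem{(1c)} of \Rpaths). One may further assume, after a standard modification of the auxiliary digraph (deleting the out-edges of $\cupall\mathcal{B}$, which forces each \Rpath to meet $\cupall\mathcal{B}$ only at its last vertex and retains property~\lipItem{(1c)}), that each $P_i^S$ meets $\cupall\mathcal{B}$ only at $a_i$ and each $P_i^T$ only at $b_i$. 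The sought $k$ paths will be the simple paths extracted from the concatenations $P_i^S\cdot R_i\cdot P_i^T$, where $R_i$ is a path from $a_i$ to $b_i$ inside $\cupall\mathcal{B}$. With the above, a vertex outside $\cupall\mathcal{B}$ lies on at most one $P_i^S$ and at most one $P_j^T$, hence on at most $2\le c$ of these paths; and a vertex of $\cupall\mathcal{B}$ lies on the $i$-th path precisely when it lies on $R_i$ (since $a_i,b_i\in V(R_i)$). It is therefore enough to produce paths $R_1,\dots,R_k$ with $R_i$ from $a_i$ to $b_i$ inside $\cupall\mathcal{B}$ and with congestion at most $c$.

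This routing is the crux of the proof. It rests on the elementary fact that for bags $B,B'\in\mathcal{B}$ the induced subgraph $G[V(B)\cup V(B')]$ is strongly connected (both bags are strong and they intersect or have edges in both directions), and more generally that one can route from any bag to any bag through any prescribed bag. The plan is to assign to each pair $i$ a family $\mathcal{A}_i\subseteq\mathcal{B}$ containing $B_i^S$, $B_i^T$, and a private block of bags of $\mathcal{B}\setminus\{B_1^S,\dots,B_k^S,B_1^T,\dots,B_k^T\}$, with $\mathcal{A}_1,\dots,\mathcal{A}_k$ pairwise disjoint, $G[\cupall\mathcal{A}_i]$ strongly connected, and $a_i,b_i\in\cupall\mathcal{A}_i$; then $R_i$ is routed inside $\cupall\mathcal{A}_i$. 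A vertex of $\cupall\mathcal{B}$ can then lie on $R_i$ only if it belongs to some bag of $\mathcal{A}_i$, so, as the families $\mathcal{A}_i$ are disjoint, it lies on at most as many of the $R_i$ as the number of bags of $\mathcal{B}$ containing it, namely at most $c$. Routing $R_i$ naively inside $V(B_i^S)\cup V(B_i^T)$ fails exactly when some bag is simultaneously an entry bag $B_i^S$ and an exit bag $B_j^T$ with $i\neq j$; the private blocks are what allow one to detour around such coincidences, and the value $g(k,c)=2k(ck-c+2)+c(k-1)$ is calibrated so that, after setting aside the at most $2k$ entry/exit bags, enough bags remain to build all $k$ blocks. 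Finally, the whole procedure consists of two calls to \autoref{theo:min-max-statement-var-3} together with the polynomial-time selection and routing just described, which yields the claimed running time.
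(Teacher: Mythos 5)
Your handling of the separator outcomes is correct and even a little cleaner than the paper's (you only need $\ell=1$ for that part, and your bound $|\mathcal{B}^*|\ge g(k,c)-c(k-1)$ is verified correctly). The gap is in the routing step, which is exactly the difficult part that the paper spends most of its effort on.

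Your plan is to assign pairwise disjoint subfamilies $\mathcal{A}_1,\dots,\mathcal{A}_k\subseteq\mathcal{B}$ with $B_i^S,B_i^T\in\mathcal{A}_i$ and then route $R_i$ inside $\cupall\mathcal{A}_i$. This already fails at the disjointness constraint: the same bag may serve simultaneously as an entry bag $B_i^S$ and an exit bag $B_j^T$ for $i\neq j$, because the two applications of the duality (to $G$ from $S$ and to $G^{\sf rev}$ from $T$) are completely independent and impose no coordination between the two families of landing bags. In that case $B_i^S\in\mathcal{A}_i$ and $B_j^T=B_i^S\in\mathcal{A}_j$, contradicting disjointness. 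You acknowledge the coincidence but assert that ``the private blocks allow one to detour'' without explaining how; and the obvious detour does not work. If $R_i$ is routed from $a_i$ into a private bag and then back to $b_i$, it still has to traverse vertices of $B_i^S$ to reach any bag of the private block (the bramble guarantees edges or intersection between $B_i^S$ and the private bag, but not that $a_i$ itself is adjacent to the private bag), and those transit vertices are exactly the ones whose congestion is in question. Concretely, a vertex $v\in B_i^S=B_j^T$ that additionally lies in $c-1$ other bags, one per $\mathcal{A}_\ell$ for $c-1$ distinct indices $\ell$, could end up on $R_i$, on $R_j$, and on $c-1$ further $R_\ell$'s, giving congestion $c+1$. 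Calibrating $g(k,c)$ so that each block is large does not address this, because the problem is not a shortage of spare bags but an overlap between the two sets of entry/exit bags and the vertices shared by them.

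The paper avoids this by a genuinely different, two-round strategy that your proposal does not anticipate. It first calls \autoref{lem:R-paths-or-small-cut} with $\ell = ck-c+1$ copies of $G$ and of $G^{\sf rev}$, obtaining $2k(ck-c+1)$ \Rpaths landing in a large subfamily $\mathcal{B}_S\cup\mathcal{B}_T$; it then prunes these paths to be $\mathcal{B}$-minimal and deletes the out-edges of every bramble vertex not lying on an approach path or in a used landing bag, yielding refined digraphs $G'_S,G'_T$. Only then does it apply the duality once more (with $\ell=1$) towards the \emph{fresh} bags $\mathcal{B}\setminus(\mathcal{B}_S\cup\mathcal{B}_T)$, and proves that in the refined digraphs a small \Rcut is impossible, so $2k$ new \Rpaths must exist; the refinement is what forces the final congestion bound, via an explicit assignment of each bramble vertex used to a bag it belongs to. In short: the paper coordinates the $S$-side and $T$-side landings \emph{before} routing, precisely so that no bag plays a double role; your single round cannot enforce this. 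To repair your argument you would need to exhibit, with proof, a routing that handles the $B_i^S=B_j^T$ overlaps without blowing the congestion, and at present no such argument is given.
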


\autoref{theorem:solution_or_small_separator} yields an {\XP} algorithm with parameter $k$ for the $(k,c)$-\textsc{DDP} problem in $k$-strong digraphs, as we proceed to discuss.
First, we remark that the {\XP} time is only required when a large bramble of congestion at most $c$ is not provided.
If this is the case, we first look at the directed tree-width of $G$, which can be approximated in {\FPT} time applying \autoref{prop:FPT-dtw-or-bramble}.
If $\dtw(G) \leq f(k)$ for some computable function $f$, then we solve the problem applying \autoref{proposition:XP-algo-DDP-congestion}.
Otherwise, we apply the machinery by Edwards et al.~\cite{Edwards2017} pipelining \cref{prop:well-linked-set-and-path,prop:finding-bramble-congestion-two} to obtain a large bramble of congestion two in digraphs of large directed tree-width, and use \autoref{theorem:solution_or_small_separator} to find a solution in polynomial time, which we show to always be possible.
When assuming that the input digraph is $k$-strong, only the third output of \autoref{theorem:solution_or_small_separator} is possible, and therefore as a direct consequence of \autoref{theorem:solution_or_small_separator} we obtain the following.

\begin{theorem}\label{theo:always-solution-given-bramble}
Let $G$ be a $k$-strong digraph and $\mathcal{B}$ be a bramble of congestion $c \geq 2$ with $|\mathcal{B}| \geq 2k(c\cdot k - c + 2) + c(k-1)$.
Then for any ordered sets $S,T\subseteq V(G)$ both of size $k$, the instance $(G, S, T)$ of $(k,c)$-{\sc DDP} with $c \geq 2$ is positive and a solution can be found in time $\Ocal(k^4 \cdot n^2)$.
\end{theorem}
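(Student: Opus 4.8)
The plan is to deduce \autoref{theo:always-solution-given-bramble} directly from \autoref{theorem:solution_or_small_separator}. First I would dispose of the trivial case $k=1$: a $1$-strong digraph is strongly connected, so it contains an $s_1\to t_1$ path, which is a solution. So assume $k\ge 2$; together with $c\ge 2$, $|\mathcal{B}|\ge 2k(c\cdot k-c+2)+c(k-1)=g(k,c)$, and $\mathcal{B}$ having congestion $c$, all hypotheses of \autoref{theorem:solution_or_small_separator} are satisfied. Running its algorithm on $G,\mathcal{B},S,T$ produces, in time $\Ocal(k^4\cdot n^2)$, one of the three listed outputs; the whole proof then reduces to showing that outputs (1) and (2) cannot occur when $G$ is $k$-strong.

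For this I would use the following observation. Suppose $\mathcal{B}^*\subseteq\mathcal{B}$ is nonempty and $X\subseteq V(G)$ is disjoint from every bag of $\mathcal{B}^*$ with $|X|\le k-1$. Since $G$ is $k$-strong, by definition every separator of $G$ has size at least $k$, so $X$ is not a separator; and since $G$ has at least $k+1$ vertices, $G\setminus X$ has at least two vertices, hence is strongly connected. Now $S\setminus X\neq\emptyset$ because $|S|=k>k-1\ge|X|$, and $\cupall\mathcal{B}^*\setminus X=\cupall\mathcal{B}^*\neq\emptyset$ because $X$ meets no bag of $\mathcal{B}^*$ and bags are nonempty strong subgraphs. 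Picking $s\in S\setminus X$ and $b\in\cupall\mathcal{B}^*$, strong connectivity of $G\setminus X$ gives both an $s\to b$ path and, for any $t\in T\setminus X$ (which is nonempty for the same reason), a $b\to t$ path in $G\setminus X$. Thus $X$ is neither an $(S,\cupall\mathcal{B}^*)$-separator nor a $(\cupall\mathcal{B}^*,T)$-separator.

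Applying this observation to the data of output (1) (resp.\ (2)) --- where $|\mathcal{B}^*|\ge g(k,c)-c(k-1)\ge 1$, $X_S$ (resp.\ $X_T$) has size at most $k-1$, and $X_S$ (resp.\ $X_T$) is disjoint from all bags of $\mathcal{B}^*$ --- yields a contradiction, so neither output can be returned. Hence \autoref{theorem:solution_or_small_separator} returns output (3): a family $\{P_1,\dots,P_k\}$ with each $P_i$ a path from $s_i$ to $t_i$ and every vertex of $G$ on at most $c$ of the $P_i$. This is exactly a solution to the instance $(G,S,T)$ of $(k,c)$-\textsc{DDP}, so the instance is positive and the solution has been produced within the claimed running time $\Ocal(k^4\cdot n^2)$.

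This derivation has no genuine obstacle; the only points needing care are checking that the bramble-size bound indeed matches $g(k,c)$ (so that all hypotheses of \autoref{theorem:solution_or_small_separator} hold) and that $g(k,c)-c(k-1)>0$ (so that the $\mathcal{B}^*$ in outputs (1)--(2) is nonempty, and its union is therefore disjoint from at most $k-1$ vertices only if such a set misses some vertex reachable from/to $S$ and $T$), together with correctly invoking the definition of $k$-strong connectivity to conclude that deleting fewer than $k$ vertices leaves $G$ strongly connected on at least two vertices.
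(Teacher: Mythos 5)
Your proof is correct and takes essentially the same route as the paper, which derives the result as a direct consequence of \autoref{theorem:solution_or_small_separator} by observing that, in a $k$-strong digraph, outputs (1) and (2) cannot occur. You simply spell out the short argument (any set of size at most $k-1$ leaves $G$ strongly connected on at least two vertices, so it can separate neither $S$ from $\cupall\mathcal{B}^*$ nor $\cupall\mathcal{B}^*$ from $T$) that the paper treats as immediate, and you handle the trivial $k=1$ case, which the paper implicitly assumes away.
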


As mentioned in the introduction, our result improves over the result of Edwards et al.~\cite{Edwards2017} by relaxing the strong connectivity of the input digraph from $36k^3 + 2k$ to $k$ (and this bound is close to the best possible unless {\P} = {\NP}), by needing a smaller bramble (from size $188k^3$ to $4k^2 + 2k -2$ when $c=2$), and because the proof is simpler and shorter.
Finally, applying \autoref{cor:bounded-dtw-or-bramble-congestion-two}, \autoref{proposition:XP-algo-DDP-congestion}, and \autoref{theo:always-solution-given-bramble} (thus again using \autoref{prop:finding-bramble-congestion-two} by~\cite{Edwards2017}) we immediately obtain the following.
\begin{corollary}\label{cor:xp-algorithm-k-strong-digraphs}
For every integer $c \geq 2$, the $(k,c)$-{\sc DDP} problem is solvable in {\XP} time with parameter $k$ in $k$-strong digraphs.
\end{corollary}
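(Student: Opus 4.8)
\medskip
\noindent\textit{Proof plan.}
This last statement is an assembly of the pieces built above, so the plan is simply to pipeline them. Fix $c\ge 2$ and let $(G,S,T)$ be an instance of $(k,c)$-\textsc{DDP} with $G$ a $k$-strong digraph (if $k\le c$ the instance is solvable in polynomial time by checking connectivity of each terminal pair, so we may assume $k>c$). Put $t:=4k^2+2k-2$, which is precisely $2k(2k-2+2)+2(k-1)$, i.e.\ the bramble size demanded by \autoref{theo:always-solution-given-bramble} when the congestion parameter there is set to $2$. First I would run the {\FPT} algorithm of \autoref{cor:bounded-dtw-or-bramble-congestion-two} on $(G,t)$; since $t$ depends only on $k$, this step is {\FPT} in $k$, and it produces one of two outcomes.

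If it reports that $\dtw(G)\le f(t)$ (with $f$ the function from \autoref{cor:bounded-dtw-or-bramble-congestion-two}), then $\dtw(G)$ is bounded by a function of $k$ and I would solve $(G,S,T)$ with \autoref{proposition:XP-algo-DDP-congestion}, in time $(c\cdot n)^{\Ocal(c(k+f(t)))}$; since $c$ is a fixed constant and $f(t)=f(4k^2+2k-2)$ is a function of $k$, this is $n^{h(k)}$ for a computable $h$, hence {\XP} in $k$. Otherwise the algorithm returns a bramble $\mathcal{B}$ of congestion two of size $t$; then, applying \autoref{theo:always-solution-given-bramble} to $G$, $S$, $T$, and $\mathcal{B}$ with congestion parameter $2$ (legitimate since $G$ is $k$-strong and $|\mathcal{B}|=t\ge 2k(2k-2+2)+2(k-1)$), we learn that $(G,S,T)$ is a \textsf{yes}-instance of $(k,2)$-\textsc{DDP} and obtain such a solution in time $\Ocal(k^4\cdot n^2)$. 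Every vertex of this solution lies on at most $2\le c$ of the $k$ paths, so the same paths form a solution of $(k,c)$-\textsc{DDP}. In both outcomes the total time is {\FPT}-in-$k$ (for the dichotomy) plus $n^{h(k)}$ or $\Ocal(k^4 n^2)$, which is {\XP} in $k$, proving the corollary.

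There is no genuine obstacle in this final step: all the work is carried out in the results being invoked --- the routing-or-separator dichotomy of \autoref{theorem:solution_or_small_separator} (via \autoref{theo:always-solution-given-bramble}) and the bramble/directed-tree-width machinery packaged in \autoref{cor:bounded-dtw-or-bramble-congestion-two}. The only two things to watch are: (i) it is enough to ask for a bramble of congestion \emph{two}, even when $c>2$, because a solution with congestion $2$ is a fortiori a solution with congestion $c$; and (ii) the requested bramble size $t$ must be set to the exact threshold $4k^2+2k-2$ of \autoref{theo:always-solution-given-bramble} for congestion $2$, so that the bramble branch is always applicable. Granting these, the corollary is immediate.
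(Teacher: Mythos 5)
Your proof is correct and matches the paper's approach exactly: pipeline \autoref{cor:bounded-dtw-or-bramble-congestion-two} (to get bounded directed tree-width or a bramble of congestion two), \autoref{proposition:XP-algo-DDP-congestion} (in the bounded-width case), and \autoref{theo:always-solution-given-bramble} (in the bramble case). Your observation that one may always request the congestion-two bramble with threshold $g(k,2)=4k^2+2k-2$ and solve $(k,2)$-\textsc{DDP}, which a fortiori solves $(k,c)$-\textsc{DDP}, is precisely the intended reading, and the remaining bookkeeping about running time and the reduction to the case $k>c$ is also as in the paper.
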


As a tool to prove \autoref{theorem:solution_or_small_separator}, we  first show how we can take many copies of digraphs $G$ and $G'$, say $\ell$ copies of each, to either find $2\ell \cdot k$ \Rpaths to a given collection $\mathcal{B}$ of sufficiently large size, or find an appropriate separator of size at most $k - 1$ in $G$ or in $G'$.
\begin{lemma}\label{lem:R-paths-or-small-cut}
Let $(\mathcal{F, S})$ be a digraph-source sequence where $\mathcal{F}$ contains $\ell$ copies of a digraph $G_S$ and $\ell$ copies of a digraph $G_T$, in this order, and $\mathcal{S}$ contains $\ell$ copies of a set $S \subseteq V(G_S)$ and $\ell$ copies of a set $T \subseteq V(G_T)$, in this order, where $|S| = |T| = k$.
Finally, let $\mathcal{B}$ be a collection of subsets of $V(G)$ with $|\mathcal{B}| \geq 2\ell \cdot k$.

Then either there is a set of \Rpaths with respect to $\mathcal{F, S}$, and $\mathcal{B}$ of size at least $2\ell \cdot k$, or for some non-empty $\mathcal{B}' \subsetneq \mathcal{B}$ there is an \Rcut $(\mathcal{B', X})$ of order at most $2\ell\cdot k - 1$ such that $\mathcal{X}$ contains $\ell$ copies of an $(S, \cupall{\mathcal{B}'})$-separator $X_S$ followed by $\ell$ copies of an an $(T, \cupall{\mathcal{B}})$-separator $X_T$ with $|X_S| + |X_T| \leq 2k-1$.
\end{lemma}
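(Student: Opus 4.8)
The plan is to reduce everything to the \Rpaths/\Rcut min--max relation of \autoref{theo:min-max-statement-var-3} and then reshape the resulting cut. First I would apply \autoref{theo:min-max-statement-var-3} to the digraph-source sequence $(\mathcal{F,S})$ and the family $\mathcal{B}$. If the maximum number of \Rpaths is at least $2\ell\cdot k$, the first alternative of the statement holds and we are done. Otherwise there is an \Rcut $(\mathcal{B}',\mathcal{X})$ of order at most $2\ell\cdot k-1$, and I would fix one of minimum order; all the objects involved are produced in polynomial time by the algorithmic part of \autoref{theo:min-max-statement-var-3}. Since $|\mathcal{B}|\ge 2\ell\cdot k$, an \Rcut with $\mathcal{B}'=\emptyset$ has order at least $|\mathcal{B}|\ge 2\ell\cdot k$, so $\mathcal{B}'\neq\emptyset$.

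The next step exploits that $\mathcal{F}$ consists of $\ell$ identical copies of $G_S$, all with source set $S$, followed by $\ell$ identical copies of $G_T$, all with source set $T$. Replacing each of the first $\ell$ entries of $\mathcal{X}$ by one common minimum $(S,\cupall\mathcal{B}')$-separator $X_S$ of $G_S$, and each of the last $\ell$ entries by one common minimum $(T,\cupall\mathcal{B}')$-separator $X_T$ of $G_T$, cannot increase the order; hence we may assume $\mathcal{X}=(X_S,\dots,X_S,X_T,\dots,X_T)$ with $\ell$ copies of each, and its order is $|\mathcal{B}\setminus\mathcal{B}'|+\ell|X_S|+\ell|X_T|\le 2\ell\cdot k-1$. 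In particular $\ell(|X_S|+|X_T|)\le 2\ell\cdot k-1<2\ell\cdot k$, and since $|X_S|+|X_T|$ is an integer this already gives $|X_S|+|X_T|\le 2k-1$, as required. (Note also $|X_S|,|X_T|\le k$, since a source set is always a separator of itself from any set disjoint from it.)

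What remains is to guarantee simultaneously that $\mathcal{B}'$ can be taken to be a \emph{proper} non-empty subfamily of $\mathcal{B}$ and that the $T$-side separator separates $T$ from $\cupall\mathcal{B}$ (and not merely from $\cupall\mathcal{B}'$), all while keeping the order at most $2\ell\cdot k-1$. If $\mathcal{B}'=\mathcal{B}$ this is easy: $X_T$ already separates $T$ from $\cupall\mathcal{B}$; deleting one arbitrary bag from $\mathcal{B}'$ (which stays non-empty since $|\mathcal{B}|\ge 2$) and re-choosing $X_S$ as a minimum separator for the smaller family only decreases $|X_S|$, so the order becomes at most $1+\ell(|X_S|+|X_T|)\le 1+\ell(2k-1)=2\ell\cdot k-\ell+1\le 2\ell\cdot k-1$, using $\ell\ge 2$. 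If $\mathcal{B}'\subsetneq\mathcal{B}$, let $R$ be the set of vertices of $G_T$ reachable from $T$ in $G_T\setminus X_T$; then $R$ is disjoint from $\cupall\mathcal{B}'$, and enlarging $\mathcal{B}'$ to the subfamily $\mathcal{B}^+$ of all bags of $\mathcal{B}$ disjoint from $R$ turns $X_T$ into a $(T,\cupall\mathcal{B}^+)$-separator without creating any new $T$-side path that needs to be cut, after which one re-picks a minimum $(S,\cupall\mathcal{B}^+)$-separator. Showing that this enlargement — iterated, or equivalently phrased as choosing the \Rcut so that its $T$-side is a \emph{global} minimum $(T,\cupall\mathcal{B})$-separator — does not push the order above $2\ell\cdot k-1$ is the delicate bookkeeping and the main obstacle; I would carry it out by invoking minimality of the chosen \Rcut together with submodularity of the separator-size functions on subfamilies of $\mathcal{B}$ and the integrality slack afforded by the $\ell$ identical copies. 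Every operation used above (the duality, the normalization, the reachability computation, and the minimum-separator computations via \autoref{thm:Menger}) is polynomial, so the construction is also algorithmic.
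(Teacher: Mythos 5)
Your first two paragraphs recover the paper's proof exactly: apply \autoref{theo:min-max-statement-var-3}; if the max number of \Rpaths is below $2\ell\cdot k$, take a minimum-order \Rcut $(\mathcal{B}',\mathcal{X}')$; conclude $\mathcal{B}'\neq\emptyset$ from $|\mathcal{B}|\ge 2\ell\cdot k$ (so $|\mathcal{X}'|=2\ell$); replace the first $\ell$ separators by one common minimum $(S,\cupall\mathcal{B}')$-separator $X_S$ of $G_S$ and the last $\ell$ by one common minimum $(T,\cupall\mathcal{B}')$-separator $X_T$ of $G_T$, which cannot increase the order; and read off $|X_S|+|X_T|\le 2k-1$ from $|\mathcal{B}\setminus\mathcal{B}'|+\ell(|X_S|+|X_T|)\le 2\ell\cdot k-1$. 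That is the paper's entire argument, and it is correct.

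Your third paragraph chases what are evidently typos in the lemma statement, and this is where your proposal has a gap of its own making. By \autoref{def:r-paths-and-r-cuts}, each $X_i$ in an \Rcut $(\mathcal{B}',\mathcal{X})$ is an $(S_i,\cupall\mathcal{B}')$-separator (against the chosen subfamily $\mathcal{B}'$, not all of $\mathcal{B}$); accordingly the paper's proof produces, and the subsequent invocation of the lemma inside the proof of \autoref{theorem:solution_or_small_separator} uses, only a $(T,\cupall\mathcal{B}')$-separator $X_T$, and that invocation also restates the hypothesis with $\mathcal{B}'\subseteq\mathcal{B}$ rather than $\subsetneq$. So the intended conclusion is precisely what you already established. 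The strengthened claim you then try to derive — $X_T$ separating $T$ from $\cupall\mathcal{B}$ together with $\mathcal{B}'\subsetneq\mathcal{B}$, all within order $2\ell\cdot k-1$ — is left unresolved in your sketch (you call it ``the main obstacle'' and the submodularity argument is never spelled out), and moreover it is not provable as stated: your own patch for the $\mathcal{B}'=\mathcal{B}$ case requires $\ell\ge 2$, yet the lemma is applied with $\ell=1$ in the second half of the proof of \autoref{theorem:solution_or_small_separator}. You should therefore not attempt to prove the verbatim statement; the version you proved in your first two paragraphs is the correct one and the one the paper actually uses.
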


\begin{proof}
Assume that the maximum size of a set of \Rpaths with respect to $\mathcal{F, S}$, and $\mathcal{B}$ is at most $2\ell \cdot k - 1$.
Then by \autoref{theo:min-max-statement-var-3} there is a minimum \Rcut $Y = (\mathcal{B}', \mathcal{X}')$ of order at most $2\ell \cdot k - 1$.

Since $|\mathcal{B}| \geq 2\ell \cdot k$ we conclude that $\mathcal{B}' \neq \emptyset$ and thus $|\mathcal{X}'| = 2\ell$. 
(we refer the reader to the discussion in the end of the first part of \autoref{sec:newpaths}).
Hence by the choice of $\mathcal{F}$ and $\mathcal{S}$ and the definition of \Rcut{s}, we can construct an \Rcut $(\mathcal{B}', \mathcal{X})$ with the same order as $Y$ by simply including in $\mathcal{X}$ exactly $\ell$ copies of the $(S, \cupall{\mathcal{B}'})$-separator $X_S$ contained in $\mathcal{X}'$, and $\ell$ copies of the $(T, \cupall\mathcal{B}')$-separator $X_T$ contained in $\mathcal{X}'$.
Thus the newly generated \Rcut satisfies  $|\mathcal{B} \setminus \mathcal{B}'| + \ell(|X_S| + |X_T|) = \order(\mathcal{B', X}) = \order(\mathcal{B', X'})  \leq 2\ell k - 1$.
This immediately implies that $|X_S| + |X_T| \leq 2k-1$ and the result follows.
\end{proof}

The following definition is used in the proof of \autoref{theorem:solution_or_small_separator}.
Since \Rpaths only consider paths leaving the sets $S_i$ and arriving in the elements of $\mathcal{B}$, and the goal is to find $S \to T$ paths, we construct a digraph-source sequence by taking many copies of the digraph $G$, and many copies of the digraph resulting from \emph{reversing} the orientation of every edge of $G$.
\begin{definition}[Reverse digraph]\label{def:reverse-digraph}
Let $G$ be a digraph.
The \emph{reverse digraph} $G^{\sf rev}$ of $G$ is the digraph constructed by reversing the orientation of every edge of $G$.
That is, $(u,v) \in E(G^{\sf rev})$ if and only if $(v,u) \in E(G)$.
\end{definition}

Now the plan is to apply twice the duality between \Rpaths and \Rcut{s}.
In the first application, we consider the digraph-source sequence formed by $2k(c\cdot k - c + 1)$ copies of $G_S = G$ and then exactly as many copies of $G_T = G^{\sf rev}$, and the same number of copies of $S$ and $T$, also in this order.
If we do not find a set of $2k(c\cdot k - c + 1)$ \Rpaths to the bags of the bramble, then we apply \autoref{lem:R-paths-or-small-cut} and find an \Rcut $(\mathcal{B', X})$ of small order and a separator of size at most $k - 1$ in $G_S$ or $G_T$.
Since the given bramble has congestion $c$, we show that we can stop with output \lipItem{1} or \lipItem{2} of \autoref{theorem:solution_or_small_separator} by taking the bramble $\mathcal{B}^*$ containing all bags of $\mathcal{B}$ that are disjoint from the small separator.
This holds because no bag of $\mathcal{B}^*$ can be in the ``wrong side'' of the separator.
That is, if for instance there is an $S \to A$ path $P$ avoiding the separator $X_S$ of size at most $k-1$ in $G_S$, then every bag of $\mathcal{B}^*$ must be in $\mathcal{B} \setminus \mathcal{B}'$ since otherwise one can construct a path in $G_S \setminus X_S$ from $S$ to a bag of $\mathcal{B}'$ by using $P$ and the connectivity properties of the bramble.
In this case, we also arrive at a contradiction since the size of $\mathcal{B}^*$ is too large to be contained in the \Rcut.

If the \Rpaths are found, then we refine the digraph by carefully choosing edges to delete from $G$ in such a way that, from a second application of the duality between \Rpaths and \Rcut{s}, we are guaranteed to find $2k$ \Rpaths that can be used to construct the $\{s_i\}\to\{t_i\}$ paths while maintaining the congestion under control.
In the refined digraph, we keep the \Rpaths found in the first iteration and delete edges leaving vertices of the bramble appearing in bags that were not used as destinations for the \Rpaths.
We apply \autoref{lem:R-paths-or-small-cut} in this digraph and show that the only possible outcome is that the \Rpaths are found.
Otherwise, there is an \Rcut of order at most $2k-1$ and hence there is a $\mathcal{B}'' \subseteq \mathcal{B}$ that contains at least one bag $A$ that is not in the \Rcut and is disjoint from the separator, say $X'_S$, of size at most $k-1$ that is part of the \Rcut.
Now the size of $X'_S$ implies that there is a path from $S$ to a bag disjoint from $X'_S$ avoiding the separator, and thus we can reach $A$ from $S$ avoiding $X'_S$.
Finally, the refined digraph allows us to associate each vertex of the bramble used by a path in $\{P_1, \ldots, P_k\}$ with a bag of the bramble, depending on where the vertex appears in the path, in such a way that no vertex is associated twice with the same bag by two distinct paths.
Together with the bound on the congestion of the bramble, this immediately implies that every vertex of the digraph appears in at most $c$ paths of the set $\{P_1, \ldots, P_k\}$.
The details follow.
\begin{proof}[Proof of \autoref{theorem:solution_or_small_separator}]
Let $G$, $S$, $T$, and $\mathcal{B}$ be as in the statement of the theorem.
Define $G_S = G$ and $G_T = G^{\sf rev}$ and let $(\mathcal{F, S})$ be a digraph-source sequence with $\mathcal{F}$ containing, in order, $c \cdot k - c + 1$ copies of $G_S$ followed by $c \cdot k - c + 1$ copies of $G_T$, and $\mathcal{S}$ containing, in order, the same number of copies of $S$ followed by exactly as many copies of $T$.
Now, applying  \autoref{lem:R-paths-or-small-cut} with respect to $\mathcal{F, S, B}$, and $\ell = c\cdot k - c + 1$, we conclude that either there are $2k(c\cdot k - c + 1)$ \Rpaths or, for some $\mathcal{B}' \subseteq \mathcal{B}$ there is an \Rcut $(\mathcal{B', X})$ where $\mathcal{X}$ contains $\ell$ copies of an $(S, \cupall\mathcal{B}')$-separator $X_S$ and $\ell$ copies of an an $(S, \cupall\mathcal{B}')$-separator $X_T$ with $|X_S| + |X_T| \leq 2k-1$.
We first consider the case where the separators are obtained.
Thus $|X_S| \leq k-1$ or $|X_T| \leq k-1$.
Since both cases are symmetric (the $T \to X_T$ paths become $X_T \to T$ paths when we restore the orientation of the edges of $G_T$), we suppose without loss of generality that  $|X_S| \leq k-1$.

Let $\mathcal{B}^*$ contain all bags of $\mathcal{B}$ that are disjoint from $X_S$.
Since $\mathcal{B}$ has congestion $c$ we conclude that $|\mathcal{B}^*| \geq g(k,c) - c(k - 1)$.
We show that no vertex appearing in a bag of $\mathcal{B}^*$ is reachable from $S$ in $G_S \setminus X_S$.
By contradiction, assume that there is an $S \to A$ path $P$ in $G_S \setminus X_S$ for some $A \in \mathcal{B}^*$.
If there is $A' \in \mathcal{B}^* \cap \mathcal{B}'$ then we can use the strong connectivity of $G_S[A \cup A']$ and the path $P$ to construct an $S \to A'$ path in $G_S \setminus X_S$, contradicting the choice of $X_S$.
Thus in this case $\mathcal{B}^*$ must be entirely contained in $\mathcal{B} \setminus \mathcal{B}'$.
Again we obtain a contradiction since $2k(c\cdot k - c + 2) \leq |\mathcal{B^*}| \leq |\mathcal{B} \setminus \mathcal{B}'| \leq \order(\mathcal{B', X}) < 2k(c\cdot k - c + 1)$.
In other words, the existence of path from $S$ to a vertex in a bag of $\mathcal{B}^*$ avoiding $X_S$ implies that every bag of $\mathcal{B}^*$ is reachable from $S$ in $G_S \setminus X_S$ and thus such path cannot exist since $\mathcal{B}^*$ is too large to be contained in any \Rcut of order less than $2k(c \cdot k - c + 1)$.
We conclude that no bag $A \in \mathcal{B}^*$ is reachable from $S$ in $G_S \setminus X_S$, and and output \lipItem{1} of the theorem follows.
Symmetrically, output \lipItem{2} of the theorem follows if $|X_T| \leq k-1$.

Assume now that a set of \Rpaths $\mathscr{P}$ of size at least $2k(c\cdot k - c + 1)$ is found.
Let $\mathcal{B}^1 \subseteq \mathcal{B}$ and let $h: \mathscr{P} \to \mathcal{B}^1$ be the bijective mapping as in \lipItem{(1c)} of \autoref{def:r-paths-and-r-cuts}.
Thus $h(p) = B$ implies that $\sink(P) \in B$.
Since $|S| = |T| = k$, we can split $\mathscr{P}$ into two sets of equal size $\mathscr{P}^S$ and $\mathscr{P}^T$, where every path in $\mathscr{P}^S$ is a path leaving $S$ in $G_S$ and every path in $\mathscr{P}^T$ is a path leaving $T$ in $G_T$.
We define $\mathcal{B}_S = \{h(P) \in \mathcal{B}^1 \mid P \in \mathscr{P}^S\}$  and $\mathcal{B}_T = \{h(P) \in \mathcal{B}^1 \mid P \in \mathscr{P}^T\}$.
The next step is to refine $\mathscr{P}$ to make it {\sl minimal with respect to $\mathcal{B}$}.
That is, we say that $\mathscr{P}$ is \emph{$\mathcal{B}$-minimal} if no path of $\mathscr{P}$ contains an internal vertex that is in a bag $\mathcal{B} \setminus (\mathcal{B}_S \cup \mathcal{B}_T)$.
In other words, when following a path $P \in \mathscr{P}$ from the first to the last vertex, if we find an internal vertex $v$ in a bag $B$ that is not in $\mathcal{B}_S$ nor in $\mathcal{B}_T$, we swap $P$ in $\mathscr{P}$ by its subpath ending in $v$ and update $\mathcal{B}_S$ or $\mathcal{B}_T$ accordingly.
Clearly, condition \lipItem{(1c)} of \autoref{def:r-paths-and-r-cuts} still hold with respect to the new choice of $\mathscr{P}$, and therefore from now on we assume that $\mathscr{P}$ is $\mathcal{B}$-minimal.
This property is important later to bound the maximum number of times that a vertex can appear in the $S \to T$ paths we construct.

Next, we reduce $G_S$ and $G_T$ to digraphs that are still well connected to $\mathcal{B}$, thus ensuring that we can find a large set of \Rpaths in the new digraphs, in which we can maintain control over how many times a vertex can appear in the $S \to T$ paths we construct from these new \Rpaths.
To this end, define
\begin{equation}\label{eq:sets-VS-and-VT}
V_S = \bigcup_{P \in \mathscr{P}^S}(V(P) \setminus \{\sink(P)\}) \cup \bigcup_{A\in \mathcal{B}_S}A \  \text{ and } \ V_T = \bigcup_{P \in \mathscr{P}^T}(V(P) \setminus \{\sink(P)\}) \cup \bigcup_{A\in \mathcal{B}_T}A.
\end{equation}
Finally, we construct the digraphs $G'_S$ and $G'_T$ starting from $G_S$ and $G_T$, respectively, applying the following two rules:

\begin{itemize}
\item For every $v \in V(G_S)$, if  $v \in (\cupall\mathcal{B}) \setminus V_S$ then we delete from $G'_S$ every edge leaving $v$.
\item For every $v \in V(G_T)$, if  $v \in (\cupall\mathcal{B}) \setminus V_T$ then we delete from $G'_T$ every edge leaving $v$.
\end{itemize}

Consider the digraph-source sequence $(\{G'_S, G'_T\}, \{S, T\})$ and let $\mathcal{B}' = \mathcal{B} \setminus (\mathcal{B}_S \cup \mathcal{B}_T)$, and notice that $\mathcal{B}'$ may not be a bramble in $G'_S$ nor in $G'_T$.
Clearly $|\mathcal{B}'| \geq g(k,c) - 2k(c \cdot k -c + 1) = c(k-1) + 2k > 2k$. 
We apply \autoref{lem:R-paths-or-small-cut} with respect to $\{G'_S, G'_T\}, \{S, T\}$ (and thus $\ell = 1$), and $\mathcal{B}'$, to either obtain a set of \Rpaths $\mathscr{P}'$ of size at least $2k$ or an \Rcut $(\mathcal{B}'', \{X'_S, X'_T\})$ with order at most $2k-1$ where $|X'_S| + |X'_T| \leq 2k-1$ and $\mathcal{B}''\neq \emptyset$.
We claim that only the first output is possible.
By contradiction, assume that the \Rcut and the separators were obtained and, without loss of generality, that $|X_S| \leq k-1$.
First notice that the upper bound on the order of the \Rcut implies that $|\mathcal{B}'' \setminus \mathcal{B}'| \geq c(k-1) + 1$.
Since $|X_S| \leq k-1$ this implies that there is at least one bag $A' \in \mathcal{B}''$ that is disjoint from $X_S$ and not included in \Rcut.

Now, set $q = 2(c \cdot k - c + 1)$ and let $\mathcal{P}_1, \ldots, \mathcal{P}_q$ be the defining partition of~$\mathscr{P}$.
That is, for every $i \in [q]$, the part $\mathcal{P}_i$ is a set of $k$ disjoint paths in the $i$-th digraph of $\mathcal{F}$ (this is possible since $|S| =|T| = k$ and hence $\mathscr{P}$ cannot contain more than $k$ disjoint paths in any digraph in $\mathcal{F}$).
Thus exactly $q/2$ parts $\mathcal{P}_i$ contain only paths starting in $S$.
Now the size of $X_S$ allows it to intersect at most $c(k-1)$ bags of $\mathcal{B}$, and thus for some $\mathcal{P}_i$ no bag in $\mathcal{B}_i = \{A \in \mathcal{B}_S \mid P \in \mathcal{P}_i \text{ and }\sink(P) \in A\}$ is intersected by $X'_S$.
Since $|X'_S| \leq k-1$, there is a $P \in \mathcal{P}_i$ from $S$ to a bag $A \in \mathcal{B}_i$ that is not intersected by $X'_S$.
By the choice of $G'_S$ this path also exists in this digraph and, since $\sink(P) \in A$ and $A \in \mathcal{B}_S$, every edge of $G_S$ leaving every vertex in $A$ is kept in $G'_S$ and thus $G'_S[A]$ is strong.
Now, as $\mathcal{B}$ is a bramble, we can construct a path from $S$ to $A'$ (remember that $A'\in \mathcal{B}''$ and $A' \cap X_S = \emptyset$) by following the path $P$ and then taking a path from $\sink(P)$ to $A'$ in $G'_S[A \cup A']$, which in turn is guaranteed to exist since either $A \cap A' \neq \emptyset$ or there is an edge from $A$ to $A'$ in $G'_S$.
This contradicts our assumption that $(\mathcal{B}'', \{X'_S, X'_T\})$ is an \Rcut and the claim follows.

Assume now that a set $\mathscr{P}'$ of $2k$ \Rpaths is obtained.
Therefore, there are disjoint paths $\{Q^S_1, \ldots, Q^S_k\}$ leaving $S$ and disjoint paths $\{Q'_1, \ldots, Q'_k\}$ leaving $T$ in $\mathscr{P}$.
By recovering the orientation of the edges of $G'_T$ (we remind the reader that $G_T = G^{\sf rev}$), we construct paths $\{Q^T_1, \ldots, Q^T_k\}$ {\sl reaching} $T$ in $G$ and, by renaming the paths if needed, we assume that, for $i \in [k]$, each $Q^S_i$ is a path starting in $s_i$ and each $Q^T_i$ is a path ending in $t_i$.
Moreover, by condition~\lipItem{(1c)} in the definition of \Rpaths (see \autoref{def:r-paths-and-r-cuts}), each $\sink(Q^S_i)$ is associated with a unique bag $A_i \in \mathcal{B}$ and each $\source(Q^T_i)$ is associated with a unique bag $A'_i \in \mathcal{B}$, such that all bags $A_1, \ldots, A_k, A'_1, \ldots, A'_k$ are distinct.
Hence, since $\mathcal{B}$ is a bramble, it follows that for every $i \in [k]$ we can find a shortest path $Q_i$ from $\sink(Q^S_i)$ to $\source(Q^T_i)$ in the strong digraph $G[A_i \cup A'_i]$.
Finally, we construct the desired paths $\{P_1, \ldots, P_k\}$, such that each $P_i$ with $i \in [k]$ is a path from $s_i$ to $t_i$, by appending $Q_i$ to $Q^S_i$ and then $Q^T_i$ to the resulting path.
Notice that this construction may result in a walk instead of a path, but every walk can be easily shortened into a path $P_i$.

We now claim that every vertex of $G$ appears in at most $c$ paths of the collection $\{P_1, \ldots, P_k\}$.
First, notice that since the paths $\{Q^S_1, \ldots, Q^S_k\}$ are disjoint and the paths $\{Q^T_1, \ldots, Q^T_k\}$ are disjoint as well, any vertex not appearing in any bag of $\mathcal{B}$ can appear in at most two paths of $\{P_1, \ldots, P_k\}$.
Assume now that $v$ is a vertex appearing in some bag of $\mathcal{B}$.
Depending on where $v$ is located in the paths $Q^S_i$, $Q^T_i$, and $Q_i$, we associate $v$ with a bag of $\mathcal{B}$.
Since $\mathcal{B}$ has congestion $c$, this immediately validates the claim and the result follows.
We remind the reader of our assumption that $\mathscr{P}$ is $\mathcal{B}$-minimal, and look again at \autoref{eq:sets-VS-and-VT}.
If $v$ is in $V_S$ because $v$ is in path $P \in \mathscr{P}^S \cup \mathscr{P}^T$ and $v \neq \sink(P)$, then we say that $v$ is a \emph{type $1$} vertex.
Otherwise, we say that $v$ is a \emph{type $2$} vertex.

For $i \in [k]$, if $v$ is a internal vertex of some $Q^S_i$, then $v \in V^S$ and is either a type $1$ or a type $2$ vertex.
If $v$ is of type $1$, then $v$ is an internal vertex of some path $P \in \mathscr{P}^S$ and $v \neq \sink(P)$.
Since $\mathcal{P}$ is $\mathcal{B}$-minimal, this implies that $v$ is in some destination bag of $\mathcal{B}_S$ and we associate $v$ with this bag.
If $v$ is of type $2$, then $v$ is not an internal vertex of any path in $\mathscr{P}^S$ and is in some bag of $\mathcal{B}_S$.
We associate $v$ with this bag.
Since the paths $\{Q^S_1, \ldots, Q^S_k\}$ are disjoint, $v$ appears only in one of those paths and thus no other $Q^S_j$ can associate $v$ with another bag of $\mathcal{B}$.
The analysis is similar if $v$ is an internal vertex of some $Q^T_j$.
Notice that it is possible that $v$ is in both $Q^S_i$ and $Q^T_j$ and, in this case, those two paths associate $v$ with two distinct bags of $\mathcal{B}_S$ and $\mathcal{B}_T$, respectively.

Now let $\mathcal{B}^2 \subseteq \mathcal{B}'$ and let $h': \mathscr{P} \to \mathcal{B}^2$ be a bijective mapping as in \lipItem{(1c)} of \autoref{def:r-paths-and-r-cuts}.
If $v$ is a vertex of some $P_i$ from $\sink(Q^S_i)$ to $\source(Q^T_i)$ then we associate $v$ with $h'(Q^S_i)$ if $v \in h'(Q^S_i)$, and we associated $v$ with $h'(Q^T_i)$ if $v \in h'(Q^T_i)$.

Now, for $i,j \in [k]$, every path of the form $Q^S_i$, $Q^T_i$, or $P_i$ associates each of its vertex inside of the bramble with a unique bag of $\mathcal{B}$, each vertex associated with some bag appears in $V(Q^S_i)\setminus \{\sink(Q^S_i)\}$, $V(Q^T_i) \setminus \{\source(Q^T_i)\}$, or $V(Q_i)$, no two distinct $Q^S_i, Q^T_j$ associate a vertex $v$ with the same bag, and the same holds with relation to distinct pairs of paths of the form $Q_i, Q_j$ and $Q^T_i, Q^T_j$.
We remark that while it is possible that some $v$ appears in both $Q^S_i$ and $Q^T_i$, this does not pose an issue since in this case $v$ is associated with a pair of distinct bags $B \in \mathcal{B}_S$ and $B' \in \mathcal{B}_T$ by $Q^S_i$ and $Q^T_i$, respectively.
Since $\mathcal{B}$ has congestion $c$, it follows that every vertex is associated with at most $c$ bags, which implies that every vertex is in at most $c$ paths of $\{P_1, \ldots, P_k\}$, and the result follows.

The bound on the running time follows by \autoref{theo:min-max-statement-var-3} and by observing that a set of \Rpaths can be made $\mathcal{B}$-minimal in time $\Ocal(c\cdot k^2 \cdot n^2)$.
\end{proof}

Next, we discuss how to apply \autoref{theorem:solution_or_small_separator} in the context of the asymmetric version of $(k,c)$-\textsc{DDP} as an improvement over~\cite[Theorem 9.1]{Giannopoulou2020}.

\subsection{Application to the asymmetric version of $(k,c)$-\textsc{DDP}}\label{subsection:application-assymetric-version}
\label{section:asymmetric}

\autoref{theorem:solution_or_small_separator} is a direct translation of Giannopoulou et al.~\cite[Theorem 9.1]{Giannopoulou2020} to our setting.
As mentioned in the introduction, we can prove a weaker version of \autoref{theo:always-solution-given-bramble} by replacing \autoref{theorem:solution_or_small_separator} by~\cite[Theorem 9.1]{Giannopoulou2020}.
In comparison with the result by Edwards et al.~\cite{Edwards2017}, with this approach we can drop the bound on the strong connectivity of the digraph from $(36k^3 + 2k)$ to $k$, and the trade-off is that in this case we have to rely on the topology of {\sl cylindrical grids} to connect the paths, instead of a bramble of congestion $c$.
Although it is true that every digraph with large directed tree-width contains a large cylindrical grid~\cite{KawarabayashiK15}, and that such a grid can be found in {\FPT} time~\cite{Campos2022}, to find a cylindrical grid of order $k$ the directed tree-width of the digraph has to increase much more than it is needed to find a bramble of congestion two (although both dependencies still consist of a non-elementary tower of exponentials).
Additionally, we remark that if the goal is to solve the $(k, c)$-\textsc{DDP} problem with $c \geq 8$, then as stated in \autoref{theo:always-solution-given-bramble} a bramble of congestion eight suffices, and a polynomial dependency on how large the directed tree-width of a digraph must be to guarantee the existence of a large bramble of congestion eight was shown by Masarík et al.~\cite{MasarikPRS22}.

On the other hand, \autoref{theorem:solution_or_small_separator} improves upon~\cite[Theorem 9.1]{Giannopoulou2020} in both its statement, since we use brambles instead of cylindrical grids, and in simplicity.
Indeed, brambles of bounded congestion seem to be a weaker structure than cylindrical grids, since it possible to extract such brambles with order $t$ from a cylindrical grid of order at least $2t$ (see \cite[Lemma 9]{Edwards2017}), and the bound on how large the directed tree-width of a digraph has to be to guarantee the existence of such a bramble with size $t$ is, in many cases (see~\cite{MasarikPRS22} for instance) and as far as we know also in the general case, substantially better than what is needed to find a cylindrical grid with the same order.
Additionally, the algorithm to find cylindrical grids runs in {\FPT} time (see~\cite{Campos2022}) provided that a certificate of large directed tree-width is given and, in contrast, a large bramble of congestion two can be found in polynomial time when such certificates are provided, as stated in \autoref{prop:finding-bramble-congestion-two}.
Finally, we only ask the bramble to have order $2k(c\cdot k - c + 2)+ c(k-1)$ (which corresponds to $4k^2 + 2k - 2$ when $c=2$) instead of the requested order $k(6k^2 + 2k + 3)$ for the cylindrical grid in the statement of~\cite[Theorem 9.1]{Giannopoulou2020}, where the goal is to compute solutions for $(k,2)$-\textsc{DDP}.
Their proof relies on the topology of cylindrical grids to connect the paths inside of this structure, after some careful selection on how to reach it from $S$ and leave it to reach $T$.
In our proof of \autoref{theorem:solution_or_small_separator}, it is very simple to connect the paths inside the bramble.
Indeed, after applying twice the duality between \Rpaths and \Rcut{s}, for each $i \in [k]$ we simply connect the ending vertex of the path from $s_i$ to the bramble containing the starting vertex of the path from the bramble to $t_i$, using the strong connectivity of the digraph induced by $B\cup B'$, where $B$ is the bag associated with $s_i$ and $B'$ the bag associated with $t_i$.

Their result~\cite[Theorem 9.1]{Giannopoulou2020} is one of the cornerstones in their algorithm to solve the asymmetric version of $(k,2)$-\textsc{DDP}.
Recall that, given ordered sets of terminals $\{s_1, \ldots, s_k\}$ and $\{t_1, \ldots, t_k\}$, the goal in this asymmetric version is to either produce a collection of paths from each $s_i$ to the corresponding $t_i$ such that every vertex is in at most two paths of the collection, or conclude that there is no collection of disjoint $\{s_i\} \to \{t_i\}$ paths.
In the first case, we say that we have constructed a \emph{half-integral linkage}. In the second case, we say that we have a {\sf no}-instance.
At any point of their dynamic programming algorithm, if one of the subproblems they define deals with an instance in which
there is no small separator intersecting all paths from $S$ to the grid or from the grid to $T$, then they apply~\cite[Theorem 9.1]{Giannopoulou2020} to find a solution to the instance.
If a separator is found, then they generate two easier instances, one of bounded directed tree-width, and one with fewer number of terminals.
Intuitively, the same holds true if we substitute~\cite[Theorem 9.1]{Giannopoulou2020} by our \autoref{theorem:solution_or_small_separator}, and it is not hard to write a formal proof verifying this.
We give an informal intuition of why this is the case below.

Let $(G, S, T)$ be an instance of the asymmetric version of $(k,2)$-\textsc{DDP}, with ordered sets $S,T \subseteq V(G)$ as described in the previous paragraph.
The goal is to adapt~\cite[Theorem 10.1]{Giannopoulou2020} using \autoref{theorem:solution_or_small_separator}.
In short, the point is that in the proof of the former result by Giannopoulou et al.~\cite{Giannopoulou2020}, the actual topology of the cylindrical grid is not relevant.
The important observations are that (i) every digraph not containing a cylindrical grid of order $t$ has directed tree-width bounded by some function of $t$, and (ii) if there are many disjoint paths from $S$ to the cylindrical grid and from the grid to $T$, then an appropriate half-integral linkage can be found in polynomial time.
To apply their strategy, we have to show that the following (informal) statement holds.

``\emph{For any choice of $(G, S, T)$ where $G$ is a digraph where no two large brambles of congestion two (whose sizes depend on $g(k,2)$ as in \autoref{theorem:solution_or_small_separator}) are separated by a small  separator (whose size depends on $k$), then the problem on $(G, S, T)$ can be solved in {\XP} time.}''

We can assume that  $G$ contains a large bramble $\mathcal{B}$ of congestion two, since otherwise we can solve $(G,S,T)$ applying \autoref{cor:bounded-dtw-or-bramble-congestion-two} and \autoref{proposition:XP-algo-DDP-congestion}.
Following the strategy of~\cite{Giannopoulou2020}, we apply induction on the size $k$ of $|S|$ and $|T|$, observing that at each step of the induction a subbramble of $G$ with size linear in $k$ is lost and thus at the starting point the requested order for $\mathcal{B}$ is larger than $g(k,2)$.

We apply \autoref{theorem:solution_or_small_separator} with $c=2$ and, if no separators are found, we output a solution for $(G, S, T)$.
Otherwise one of the separators, say $X_S$ (the other case is symmetric), is obtained.
Denote by $G^{\sf left}$ the digraph induced by $X_S$ plus all vertices that are reachable from $S$ in $G \setminus X_S$, and by $G^{\sf right}$ the digraph induced by $(V(G) \setminus {V(G^{\sf left})}) \cup X_S$.
For simplicity, assume that $T \subseteq V(G^{\sf right}) \setminus V(G^{\sf left})$.
By our assumption that output \lipItem{(1)} of \autoref{theorem:solution_or_small_separator} was obtained, $G^{\sf right}$ contains a large bramble $\mathcal{B}'$ that is separated from $S$ by $X_S$.

Since the goal is to obtain an {\XP} algorithm, we are allowed to guess how a solution can cross the separator $X_S$.
That is, we guess the ordered sets
\begin{itemize}
  \item $S' \subseteq V(G^{\sf left})\setminus X_S$,
  \item $X^{\sf mid} \subseteq X_S$, and
  \item $T' \subseteq V(G^{\sf right}) \setminus X_S$,
\end{itemize}
and the goal is to solve instances of $(k,2)$-\textsc{DDP} of the form $(G^{\sf left}, S', X^{\sf mid})$ and $(G^{\sf right}, X^{\sf mid}, T')$ (we omit some details on the choices of the new instances in order to shorten this informal discussion).
A simple enumeration argument suffices to show that the number of instances is bounded by a function depending on $k$.

By our assumption that no two large brambles of congestion two are separated by a small separator, we conclude that $G^{\sf left}$ cannot contain a large bramble of congestion two and thus the directed tree-width of $G^{\sf left}$ is bounded by some function of $k$.
Thus, we can apply \autoref{proposition:XP-algo-DDP} to solve instances of $k$-\textsc{DDP} in $G^{\sf left}$. 
For instances on $G^{\sf right}$, we apply the induction hypothesis.
We remind the reader that our assumption that output \lipItem{1} of \autoref{theorem:solution_or_small_separator} was obtained implies that $G^{\sf right}$ contains a large bramble of congestion two.
The fact that $|X_S| \leq k-1$  immediately implies that $|X^{\sf mid}| \leq k-1$, and it remains to argue that no two large (where the size now depends on $(g(k-1, 2)$) brambles of congestion two in $G^{\sf left}$ are separated by a small separator (whose size now depends on $k-1$).
This can be  done by a counting argument as in the proof of~\cite[Theorem 10.2]{Giannopoulou2020}.

If for some choice of $X^{\sf mid}$ we obtain a solution $\mathscr{P}^{\sf left}$ for an instance $(G^{\sf left}, S', X^{\sf mid})$ of $k$-\textsc{DDP} and a solution $\mathscr{P}^{\sf right}$ for the instance $(G^{\sf right}, X^{\sf mid}, T')$, then we can glue together both solutions to construct a half-integral solution for $(G, S, T)$.
If this is not the case, then we conclude that $(G, S, T)$ is a {\sf no}-instance.

Therefore, with a new version of~\cite[Theorem 10.1]{Giannopoulou2020} it is straightforward to apply it together with the decomposition by Giannopoulou et al.~\cite{Giannopoulou2020} to obtain a new proof of an {\XP} algorithm for the asymmetric version of $(k, 2)$-\textsc{DDP}.


\bibliography{main}

\begin{thebibliography}{10}

\bibitem{AkhoondianAmiri2019}
Saeed {Akhoondian Amiri}, Stephan Kreutzer, Dániel Marx, and Roman Rabinovich.
\newblock Routing with congestion in acyclic digraphs.
\newblock {\em Information Processing Letters}, 151:105836, 2019.
\newblock \href {https://doi.org/doi.org/10.1016/j.ipl.2019.105836}
  {\path{doi:doi.org/10.1016/j.ipl.2019.105836}}.

\bibitem{AmiriKMR19}
Saeed~Akhoondian Amiri, Stephan Kreutzer, D{\'{a}}niel Marx, and Roman
  Rabinovich.
\newblock Routing with congestion in acyclic digraphs.
\newblock {\em Information Processing Letters}, 151, 2019.
\newblock \href {https://doi.org/10.1016/j.ipl.2019.105836}
  {\path{doi:10.1016/j.ipl.2019.105836}}.

\bibitem{BangJensen2018}
J{ø}rgen Bang-Jensen and Gregory Gutin.
\newblock {\em Classes of Directed Graphs}.
\newblock Springer Monographs in Mathematics, 2018.
\newblock \href {https://doi.org/10.1007/978-3-319-71840-8}
  {\path{doi:10.1007/978-3-319-71840-8}}.

\bibitem{Bondy2008}
Adrian Bondy and U.S.R. Murty.
\newblock {\em Graph Theory}.
\newblock Springer-Verlag London, 2008.

\bibitem{Campos2022}
Victor Campos, Raul Lopes, Ana~Karolinna Maia, and Ignasi Sau.
\newblock {Adapting the Directed Grid Theorem into an FPT Algorithm}.
\newblock {\em SIAM Journal on Discrete Mathematics}, 36(3):1887--1917, 2022.
\newblock \href {https://doi.org/10.1137/21M1452664}
  {\path{doi:10.1137/21M1452664}}.

\bibitem{Cygan2015}
Marek Cygan, Fedor~V. Fomin, Łukasz Kowalik, Daniel Lokshtanov, and Dániel
  Marx.
\newblock {\em Parameterized Algorithms}.
\newblock Springer, 2015.

\bibitem{Matthias2014}
Matthias Dehmer and Frank Emmert-Streib.
\newblock {\em Quantitative Graph Theory: Mathematical Foundations and
  Applications}.
\newblock Discrete Mathematics and its Applications. Chapman and Hall/CRC,
  2014.

\bibitem{Downey2013}
Rodney~G. Downey and Michael~R. Fellows.
\newblock {\em Fundamentals of Parameterized Complexity}.
\newblock Texts in Computer Science. Springer, 2013.

\bibitem{Edwards2017}
Katherine Edwards, Irene Muzi, and Paul Wollan.
\newblock {Half-Integral Linkages in Highly Connected Directed Graphs}.
\newblock In {\em Proc. of the 25th Annual European Symposium on Algorithms
  (ESA)}, volume~87 of {\em LIPIcs}, pages 36:1--36:12, 2017.
\newblock Full version available in \url{https://arxiv.org/abs/1611.01004}.
\newblock \href {https://doi.org/10.4230/LIPIcs.ESA.2017.36}
  {\path{doi:10.4230/LIPIcs.ESA.2017.36}}.

\bibitem{FortuneHW80}
Steven Fortune, John~E. Hopcroft, and James Wyllie.
\newblock The directed subgraph homeomorphism problem.
\newblock {\em Theoretical Computer Science}, 10:111--121, 1980.
\newblock \href {https://doi.org/10.1016/0304-3975(80)90009-2}
  {\path{doi:10.1016/0304-3975(80)90009-2}}.

\bibitem{Giannopoulou2020}
Archontia~C. Giannopoulou, {Ken-ichi} Kawarabayashi, Stephan Kreutzer, and
  {O-joung} Kwon.
\newblock The canonical directed tree decomposition and its applications to the
  directed disjoint paths problem, 2020.
\newblock This article is the full version of reference
  \cite{GiannopoulouKKK22}.
\newblock \href {http://arxiv.org/abs/2009.13184} {\path{arXiv:2009.13184}}.

\bibitem{GiannopoulouKKK22}
Archontia~C. Giannopoulou, Ken{-}ichi Kawarabayashi, Stephan Kreutzer, and
  O{-}joung Kwon.
\newblock Directed tangle tree-decompositions and applications.
\newblock In {\em Proc. of the 2022 {ACM-SIAM} Symposium on Discrete Algorithms
  (SODA)}, pages 377--405, 2022.
\newblock The full version of this article is
  reference~\cite{Giannopoulou2020}.
\newblock \href {https://doi.org/10.1137/1.9781611977073.19}
  {\path{doi:10.1137/1.9781611977073.19}}.

\bibitem{Johnson2001}
Thor Johnson, Neil Robertson, Paul Seymour, and Robin Thomas.
\newblock Directed tree-width.
\newblock {\em Journal of Combinatorial Theory, Series B}, 82(01):138--154,
  2001.
\newblock \href {https://doi.org/10.1006/jctb.2000.2031}
  {\path{doi:10.1006/jctb.2000.2031}}.

\bibitem{KawarabayashiKK14}
Ken{-}ichi Kawarabayashi, Yusuke Kobayashi, and Stephan Kreutzer.
\newblock An excluded half-integral grid theorem for digraphs and the directed
  disjoint paths problem.
\newblock In {\em Proc. of the 46th Annual {ACM} on Symposium on Theory of
  Computing (STOC)}, pages 70--78, 2014.
\newblock \href {https://doi.org/10.1145/2591796.2591876}
  {\path{doi:10.1145/2591796.2591876}}.

\bibitem{KawarabayashiK15}
Ken{-}ichi Kawarabayashi and Stephan Kreutzer.
\newblock {The Directed Grid Theorem}.
\newblock In {\em Proc. of the 47th Annual {ACM} on Symposium on Theory of
  Computing (STOC)}, pages 655--664. {ACM}, 2015.
\newblock \href {https://doi.org/10.1145/2746539.2746586}
  {\path{doi:10.1145/2746539.2746586}}.

\bibitem{Konig31}
Dénes Kőnig.
\newblock Gráfok és mátrixok.
\newblock {\em Matematikai és Fizikai Lapok}, 38:116--119, 1931.

\bibitem{Lopes2022}
Raul Lopes and Ignasi Sau.
\newblock A relaxation of the {D}irected {D}isjoint {P}aths problem: A global
  congestion metric helps.
\newblock {\em Theoretical Computer Science}, 898:75--91, 2022.
\newblock \href {https://doi.org/10.1016/j.tcs.2021.10.023}
  {\path{doi:10.1016/j.tcs.2021.10.023}}.

\bibitem{MasarikPRS22}
Tom{\'{a}}s Masar{\'{\i}}k, Marcin Pilipczuk, Pawel Rzazewski, and Manuel
  Sorge.
\newblock Constant congestion brambles in directed graphs.
\newblock {\em {SIAM} Journal on Discrete Mathematics}, 36(2):922--938, 2022.
\newblock \href {https://doi.org/10.1137/21m1417661}
  {\path{doi:10.1137/21m1417661}}.

\bibitem{Menger1927}
Karl Menger.
\newblock Zur allgemeinen kurventheorie.
\newblock {\em Fundamenta Mathematicae}, 10(1):96--115, 1927.
\newblock URL: \url{http://eudml.org/doc/211191}.

\bibitem{Ore1955}
Oystein Ore.
\newblock Graphs and matching theorems.
\newblock {\em Duke Mathematical Journal}, 22(4):625--639, 1955.

\bibitem{Oxley2011}
James~G. Oxley.
\newblock {\em Matroid theory}.
\newblock Oxford University Press, USA, 2011.

\bibitem{Schrijver2002}
Alexander Schrijver.
\newblock {\em Combinatorial Optimization}.
\newblock Algorithms and Combinatorics. Springer Berlin, Heidelberg, 2002.

\bibitem{Slivkins2010}
Aleksandrs Slivkins.
\newblock Parameterized tractability of edge-disjoint paths on directed acyclic
  graphs.
\newblock {\em SIAM Journal on Discrete Mathematics}, 24(1):146--157, 2010.
\newblock \href {https://doi.org/10.1137/070697781}
  {\path{doi:10.1137/070697781}}.

\bibitem{Thomassen91}
Carsten Thomassen.
\newblock Highly connected non-2-linked digraphs.
\newblock {\em Combinatorica}, 11(4):393--395, 1991.
\newblock \href {https://doi.org/10.1007/BF01275674}
  {\path{doi:10.1007/BF01275674}}.

\end{thebibliography}
\end{document}